\tikzset{
B/.style = {decorate,
            decoration={calligraphic brace, amplitude=4pt,
            raise=1pt, mirror},% for mirroring of brace
            pen colour=black},
dot/.style = {circle, fill, inner sep=2pt, outer sep=0pt}
        }
\newtheorem{theorem}{Theorem}[section]
\newtheorem*{kitai}{Kitai's Criterion}
\newtheorem{lemma}[theorem]{Lemma}
\newtheorem{corollary}[theorem]{Corollary}
\newtheorem{proposition}[theorem]{Proposition}
\theoremstyle{definition}
\newtheorem{example}[theorem]{Example}	
\newtheorem{remark}[theorem]{Remark}
\newtheorem*{acknowledgement}{Acknowledgement}
\title{Kitai's Criterion for Composition Operators}
\author{Daniel Gomes}
\address{Daniel Gomes, Departamento de Matemática, Instituto de Matemática, Estatística e Computação Científica, Universidade Estadual de Campinas, Rua Sérgio Buarque de Holanda, 651, 13083-970, Campinas-SP, Brazil}
\email{\href{mailto:danielgomes@ime.unicamp.br}{danielgomes@ime.unicamp.br}}
\author{Karl-G. Grosse-Erdmann}
\address{Karl-G. Grosse-Erdmann, Département de Mathématique, Université de Mons, 20 Place
du Parc, 7000 Mons, Belgium}
\email{\href{mailto:kg.grosse-erdmann@umons.ac.be}{kg.grosse-erdmann@umons.ac.be}}
\date{}
\begin{document}

\begin{abstract}
We present a general and natural framework to study the dynamics of composition operators on spaces of measurable functions, in which we then reconsider the characterizations for hypercyclic and mixing composition operators obtained by Bayart, Darji and Pires \cite{bayart2018topological}. We show that the notions of hypercyclicity and weak mixing coincide in this context and, if the system is dissipative, the recurrent composition operators agree with the hypercyclic ones. We also give a characterization for invertible composition operators satisfying Kitai's Criterion, and we construct an example of a mixing composition operator not satisfying Kitai's Criterion. For invertible dissipative systems with bounded distortion we show that composition operators satisfying Kitai's Criterion coincide with the mixing operators.
\end{abstract}
\maketitle

\section{Introduction}

The fact that there exist linear operators with a dense orbit is known since 1929, when Birkhoff \cite{birkhoff1929demonstration} proved that there are translation operators on the space of entire functions that are \textit{hypercyclic} (that is, they admit a dense orbit). Since then, many other kinds of chaotic behavior have been studied regarding the dynamics of linear operators, as one can see, for example, in the books \cite{BayartMatheron2009} and \cite{grosse2011linear}. Let us recall some of these notions that we shall study in this paper. We say that a continuous linear operator $T:\mathcal{X}\to \mathcal{X}$ acting on a Banach space $\mathcal{X}$ is:
\begin{itemize}
    \item \textit{recurrent} (see \cite{costakis2014}) if for every non-empty open set $U\subseteq \mathcal{X}$, there is some $k\geq 0$ (and hence infinitely many $k$) such that $T^k(U)\cap U \neq \varnothing$;
    \item \textit{topologically transitive} if for every pair $U,V\subseteq \mathcal{X}$ of non-empty open sets, there exists $k\geq 0$ such that $T^k(U)\cap V\neq \varnothing$;
    \item \textit{weakly mixing} if $T\times T$ is topologically transitive;
    \item \textit{topologically mixing} if for every pair $U,V\subseteq \mathcal{X}$ of non-empty open sets, there exists $k_0\geq 1$ such that $T^k(U)\cap V\neq \varnothing$ for every $k\geq k_0$.
\end{itemize}
 It is clear that 
\[
\text{topological mixing}\implies \text{weak mixing} \implies \text{topological transitivity} \implies \text{recurrence}. 
\]
By the Birkhoff transitivity theorem (see \cite[Theorem~1.16]{grosse2011linear}), topological transitivity is equivalent to hypercyclicity.

Many classes of linear operators have been studied lately. Motivated by the study of weighted shifts (a very important class of operators in linear dynamics), Bayart, Darji and Pires \cite{bayart2018topological} have studied the dynamics of composition operators on Banach spaces $\mathcal{X}$ of (equivalence classes of) measurable functions that generalize the spaces $L^p$.

We assume that $(X,\mathcal{B},\mu)$ is a $\sigma$-finite measure space and that $(\mathcal{X},\|\cdot\|)\subseteq L^0_\mu(X)$ is a Banach space of measurable functions. Given a measurable transformation $f:X\to X$, we consider the composition operator $T_f: L^0_\mu(X)\to L^0_\mu(X)$ given by $\varphi\mapsto \varphi\circ f$, where we assume that $f$ is a non-singular transformation, i.e. for every $B\in \mathcal{B}$, $\mu(B)=0$ implies $\mu(f^{-1}(B))=0$. This guarantees that the operator $T_f$ is well defined (see \cite{singh1993composition}). We then call $(X,\mathcal{B},\mu,f)$ a \textit{measurable system}. If $f$ is bijective and bimeasurable, then we assume that also $f^{-1}$ is non-singular so that $T_{f^{-1}}=T_f^{-1}$ is well defined. We then call $(X,\mathcal{B},\mu,f)$ an \textit{invertible measurable system}.
By the measurability of $f$, one always has that $f^{-1}(\mathcal{B})\subseteq \mathcal{B}$. We say that $f^{-1}(\mathcal{B})$ \textit{is essentially all of} $\mathcal{B}$, and denote it by $f^{-1}(\mathcal{B})=_{\text{ess}}\mathcal{B}$, if given $A\in \mathcal{B}$, there exists $B\in \mathcal{B}$ such that $\mu(A\Delta f^{-1}(B))=0$ (see \cite{whitley1978}). 

We will always assume that $T_f$ maps $\mathcal{X}$ into $\mathcal{X}$, so that we can study the dynamics of the composition operator $T_f$ on $\mathcal{X}$. For an invertible measurable system we also assume that $T_f^{-1}$ maps $\mathcal{X}$ into $\mathcal{X}$.

Bayart et al. \cite{bayart2018topological} have introduced four hypotheses (H1) to (H4) on $\mathcal{X}$ to study dynamical properties of the composition operator $T_f$. We propose here to consider instead the following a priori weaker, possibly equivalent, set of conditions, which we regard as a natural framework (for details we refer to \cref{sec: composition operators}):
\begin{enumerate}
    \item[(H1)] For any $A\in \mathcal{B}$ with finite measure, $\chi_A\in\mathcal{X};$
    \item[(H2)] The set of simple functions that vanish outside a set of finite measure is dense in $\mathcal{X};$
    \item[(C1)] The inclusion map $I:\mathcal{X}\xhookrightarrow{} L^0_\mu(X)$ is continuous;
    \item[(C2)] For every set $A\in \mathcal{B}$ of finite measure, the map
    \[
        J_A:\mathcal{B}(A)\to \mathcal{X}, \ C \mapsto \chi_C
    \]
    is continuous.
\end{enumerate}
Note that conditions (H2) and (C2) are understood to imply (H1). The conditions (H1) and (H2) are those from \cite{bayart2018topological}, while the continuity conditions (C1) and (C2) replace the previous conditions (H3) and (H4), respectively, as we will discuss below. 

In the following, the outer measure of a set $B\subseteq X$ is given by \[\mu^*(B)=\inf \{\mu(C): B\subseteq C, C\in \mathcal{B}\}.\]

In \cite[Theorem~1.1]{bayart2018topological}, it is shown that if $\mathcal{X}$ satisfies conditions (H1) to (H4), then $T_f:\mathcal{X}\to \mathcal{X}$ is hypercyclic if, and only if, $f^{-1}(\mathcal{B})=_{\text{ess}}\mathcal{B}$ and $f$ satisfies the \textit{Hypercyclic Runaway Condition}:
\begin{enumerate}
    \item[(HRC)] \textit{For every $A \in \mathcal{B}$ with finite measure there exist a sequence of measurable sets $B_k\subseteq A$, $k\geq 1$, and an increasing sequence $(n_k)_{k\geq 1}$ of positive integers such that
    \[
		\mu(A\setminus B_k)\to 0,\hspace*{0.5cm} \mu(f^{-n_k}(B_k))\to 0 \hspace*{0.5cm} \text{and} \hspace*{0.5cm} \mu^*(f^{n_k}(B_k))\to 0.
		\] }
\end{enumerate}
It is also shown in \cite[Theorem~1.2]{bayart2018topological} that $T_f$ is mixing if, and only if, $f^{-1}(\mathcal{B})=_{\text{ess}}\mathcal{B}$ and $f$ satisfies the \textit{Mixing Runaway Condition}:
\begin{enumerate}
    \item[(MRC)] \textit{For every $A \in \mathcal{B}$ with finite measure there exists a sequence of measurable sets $B_n\subseteq A$, $n\geq 1$, such that
    \[
\mu(A\setminus B_n)\to 0,\hspace*{0.5cm} \mu(f^{-n}(B_n))\to 0 \hspace*{0.5cm} \text{and} \hspace*{0.5cm} \mu^*(f^{n}(B_n))\to 0.
				\] }
\end{enumerate}

\begin{remark}\label{rem: BDP}
(a) We have corrected here a minor oversight in \cite{bayart2018topological}, where the condition $f^{-1}(\mathcal{B})=\mathcal{B}$ appears (see \cref{rmk: f-1(B)=B in bayart}).

(b) In \cite{bayart2018topological}, (HRC) was expressed in the following equivalent way: \textit{For every $A \in \mathcal{B}$ with finite measure and for every $\varepsilon>0$, there exist $B \in \mathcal{B}$ and $k\geq 1$ such that $B\subseteq A,$
    \begin{equation}\label{eq: HRC1}
		\mu(A\setminus B)<\varepsilon,\hspace*{0.5cm} \mu(f^{-k}(B))<\varepsilon \hspace*{0.5cm} \text{and} \hspace*{0.5cm} \mu^*(f^{k}(B))<\varepsilon.
		\end{equation}}
  Note that, by the proof in \cite{bayart2018topological}, $k$ can be chosen arbitrarily large.
		
(c) There is another equivalent but formally stronger condition: \textit{There exists an increasing sequence $(n_k)_{k\geq 1}$ of positive integers
such that, for every $A \in \mathcal{B}$ with finite measure, there exists a sequence of measurable sets $B_k\subseteq A$, $k\geq 1$, such that
    \begin{equation}\label{eq: HRC2}
		\mu(A\setminus B_k)\to 0,\hspace*{0.5cm} \mu(f^{-n_k}(B_k))\to 0 \hspace*{0.5cm} \text{and} \hspace*{0.5cm} \mu^*(f^{n_k}(B_k))\to 0.
		\end{equation}}
Indeed, since $X$ is $\sigma$-finite and therefore the union of an increasing sequence $(F_k)_{k\geq 1}$ of measurable sets with finite measure, by \eqref{eq: HRC1} there are measurable sets $E_k\subseteq F_k$, $k\geq 1$, and a sequence $(n_k)_k$ such that $\mu(F_k\setminus E_k)<\frac{1}{k}$, $\mu(f^{-{n_k}}(E_k))<\frac{1}{k}$ and $\mu^*(f^{n_k}(E_k))<\frac{1}{k}$, $k\geq 1$. Then, for any measurable set $A$ with finite measure, $\mu(A\setminus F_k)\to 0$ and hence the sets $B_k:=A\cap E_k$, $k\geq 1$, satisfy \eqref{eq: HRC2}.

(d) Condition (MRC) is also different from but equivalent to the one appearing in \cite{bayart2018topological}.
\end{remark}

A very useful tool to show that an operator is mixing is to show that it satisfies Kitai's Criterion, that is, it satisfies the hypotheses of the following theorem (see \cite{kitai1982}, \cite{grivaux2005hypercyclic}, \cite{grosse2011linear}):
\begin{kitai} 
    Let $T$ be an operator on a separable Banach space $\mathcal{X}$. If there are dense subsets $\mathcal{X}_0,\mathcal{Y}_0 \subseteq \mathcal{X}$ and a map $S:\mathcal{Y}_0 \to \mathcal{Y}_0$ such that, for any $x\in \mathcal{X}_0$ and $y\in \mathcal{Y}_0$
    \begin{enumerate}[\rm (i)]
        \item $T^nx \to 0,$
        \item $S^ny \to 0,$
        \item $TSy=y,$
    \end{enumerate}
    then $T$ is mixing.
\end{kitai}

The question regarding whether every mixing operator satisfies Kitai's Criterion, raised by Shapiro in \cite{shapiro2001notes}, was solved by Grivaux in \cite{grivaux2005hypercyclic}, where it is shown that every infinite dimension separable Banach space admits a mixing operator that does not satisfy Kitai's Criterion. By characterizing the invertible composition operators satisfying Kitai's Criterion (see \cref{principal}) and using the characterization for mixing composition operators given by  \cite[Theorem~1.2]{bayart2018topological}, we are able to construct an example of a composition operator that is mixing but does not satisfy Kitai's Criterion.

The paper is organised as follows. In \cref{sec: composition operators}, we will show in \cref{th: characterization hc,th: characterization mixing} that if one replaces (H3) and (H4) by (C1) and (C2), the characterizations for hypercyclic and mixing composition operators given in \cite{bayart2018topological} still hold. Even more, we will show that hypercyclic composition operators in this setting coincide with the weakly mixing ones. \cref{sec: kitais criterion} is devoted to our main result: we show that there exist dissipative composition operators, acting on $L^p(X), 1\leq p<\infty, $ that are mixing and do not satisfy Kitai's Criterion. \cref{sec: dissipative systems} is dedicated to the study of the dynamics of composition operators in the dissipative setting, where we introduce a weaker set of ``local'' conditions (LH1), (LH2), (LC1) and (LC2). In particular, we show that if the system is dissipative, recurrence implies hypercyclicity. In \cref{sec: backward shifts} we apply the main results of \cref{sec: dissipative systems} to backward shift operators on sequence spaces, thereby obtaining an improvement of the currently best result on hypercyclicity and mixing in the literature. Finally, \cref{sec: bounded distortion} is dedicated to the study of invertible dissipative systems with bounded distortion. In particular, we show that the notions of being mixing and satisfying Kitai's Criterion are equivalent in this context.

\begin{remark}\label{rmk: F-spaces}
All the results in this paper hold as well if $(\mathcal{X},\|\cdot\|)\subseteq L^0_\mu(X)$ is an F-space of measurable functions, where $\|\cdot\|$ denotes an F-norm (see \cite{kalton1984Fspaces}). In particular, they hold for all Fréchet spaces, with $\|\cdot\|$ an F-norm defining its topology (see \cite[p. 35]{grosse2011linear}). One need only note that any F-norm satisfies that $\|a\varphi\|\leq (|a|+1)\|\varphi\|$ for any scalar $a$.
\end{remark}

\section{Composition Operators}\label{sec: composition operators}

Let us begin this section explaining what we mean by the continuity of the inclusion in (C1), by defining a natural metric on $L^0_\mu(X)$. Let $(\varphi_n)_{n\geq 1}$ be a sequence of measurable functions. We say that $\varphi_n$ converges in measure to a measurable function $\varphi$, and denote it by $\varphi_n\xrightarrow[]{\mu}\varphi$, if for every $\xi>0$, 
\[
\mu(\{ x\in X: |\varphi_n(x)-\varphi(x)|\geq \xi\})\to 0.
\]
We will consider the extended metric $d$ on $L^0_\mu(X)$, compatible with convergence in measure, that was introduced by Fréchet (see \cite[Section 16]{frechet1921}, also \cite[p. 426]{Bogachev2007}) and is given by 
\[
d(\varphi,\psi)=\inf_{\xi>0}\Big(\mu(\{x\in X: |\varphi(x)-\psi(x)|\geq \xi\}) +\xi\Big). 
\]

The following lemma illustrates the relationship between the measure $\mu$ on $X$ and the distance $d$ on $L^0_\mu(X)$.

\begin{lemma}\label{lemma}
    Let $0<\varepsilon<1$ and $\varphi,\psi\in L^0_\mu(X)$. If $d(\varphi,\psi)<\varepsilon$, then
\[
\mu(\{x\in X: |\varphi(x)-\psi(x)|\geq 1\})<\varepsilon.
\]
\end{lemma}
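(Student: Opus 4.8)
The plan is to unwind the definition of the infimum defining $d$ and to exploit the monotonicity of the function $\xi \mapsto \mu(\{x : |\varphi(x)-\psi(x)|\geq \xi\})$ in $\xi$.

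First I would use the hypothesis $d(\varphi,\psi)<\varepsilon$ together with the fact that $d$ is defined as an infimum over $\xi>0$: there must exist a particular value $\xi_0>0$ for which
\[
\mu(\{x\in X: |\varphi(x)-\psi(x)|\geq \xi_0\}) + \xi_0 < \varepsilon.
\]
Since both summands are nonnegative, this forces $\xi_0<\varepsilon$, and because $\varepsilon<1$ by assumption we obtain $\xi_0<1$. This is the only place where the hypothesis $\varepsilon<1$ enters, and it is exactly what makes the set comparison below valid.

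Next I would compare the two superlevel sets. Writing $A_\xi:=\{x\in X: |\varphi(x)-\psi(x)|\geq \xi\}$, the key observation is that $\xi_0<1$ gives the inclusion $A_1\subseteq A_{\xi_0}$: indeed, if $|\varphi(x)-\psi(x)|\geq 1$ then a fortiori $|\varphi(x)-\psi(x)|\geq \xi_0$. By monotonicity of $\mu$ this yields $\mu(A_1)\leq \mu(A_{\xi_0})$, and combining with the previous step,
\[
\mu(A_1)\leq \mu(A_{\xi_0})\leq \mu(A_{\xi_0})+\xi_0 < \varepsilon,
\]
which is the desired conclusion.

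I do not expect any genuine obstacle in this argument; it is essentially a one-line consequence of the definition of the Fréchet metric. The only point meriting care is the passage from the infimum to an explicit witness $\xi_0$ and the verification that $\xi_0<1$, so that dropping from threshold $1$ down to threshold $\xi_0$ can only enlarge the set being measured.
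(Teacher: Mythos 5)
Your proposal is correct and follows essentially the same route as the paper's own proof: both extract a witness $\xi_0<\varepsilon<1$ from the infimum defining $d$ and then use the inclusion $\{|\varphi-\psi|\geq 1\}\subseteq\{|\varphi-\psi|\geq \xi_0\}$ together with monotonicity of $\mu$. Nothing is missing.
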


\begin{proof}
   By definition of $d$ there exists $\xi>0$ such that
    \[
		\mu(\{x\in X: |\varphi(x)-\psi(x)|\geq \xi\}) +\xi< \varepsilon.
		\]
    In particular, $\xi<\varepsilon<1$ and
    \[
	\mu(\{x\in X: |\varphi(x)-\psi(x)|\geq 1\})\leq	\mu(\{x\in X: |\varphi(x)-\psi(x)|\geq \xi\})<\varepsilon.
		\]
\end{proof}

\begin{remark}\label{remark inclusion}
    Using \cref{lemma}, it is easy to see that (C1) is actually equivalent to (H3) in \cite{bayart2018topological}.
\end{remark}

Recall that we demand of $T_f$ that it maps the space $\mathcal{X}$ into itself.

\begin{proposition}
    Suppose that $\mathcal{X}$ satisfies the condition \emph{(C1)}. Then $T_f:\mathcal{X}\to\mathcal{X}$ is continuous.
\end{proposition}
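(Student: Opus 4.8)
The plan is to deduce continuity from the \emph{closed graph theorem}. Since $\mathcal{X}$ is a Banach space (or an F-space, see \cref{rmk: F-spaces}), and $T_f$ is clearly linear and, by our standing assumption, maps $\mathcal{X}$ into $\mathcal{X}$, it suffices to verify that the graph of $T_f$ is closed: whenever $\varphi_n\to\varphi$ and $T_f\varphi_n\to\psi$ in $\mathcal{X}$, I must show that $\psi=T_f\varphi$. The reason for routing through the closed graph theorem rather than estimating $\|T_f\varphi\|$ directly is that (C1) only gives us control in $L^0_\mu(X)$, not a quantitative bound in the norm of $\mathcal{X}$.

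The first step is to transport the two hypothesised convergences from $\mathcal{X}$ into $L^0_\mu(X)$. By condition (C1) the inclusion $I\colon\mathcal{X}\hookrightarrow L^0_\mu(X)$ is continuous, so $\varphi_n\xrightarrow{\mu}\varphi$ and $\varphi_n\circ f=T_f\varphi_n\xrightarrow{\mu}\psi$. Here one is tempted to argue that composition with $f$ is continuous on $L^0_\mu(X)$ and conclude immediately; this is precisely the \textbf{main obstacle}, because non-singularity of $f$ guarantees only that $\mu$-null sets are pulled back to $\mu$-null sets, and \emph{not} the stronger statement that $\mu(A_n)\to 0$ implies $\mu(f^{-1}(A_n))\to 0$, which genuine continuity of $T_f$ on $L^0_\mu(X)$ would demand. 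I will therefore sidestep this by passing to almost everywhere convergent subsequences, where mere non-singularity is exactly what is needed.

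The final step uses that convergence in measure always admits an almost everywhere convergent subsequence (by a Borel--Cantelli argument, valid on an arbitrary measure space). So I would choose a subsequence with $\varphi_{n_k}\to\varphi$ $\mu$-a.e., say off a null set $N$. By non-singularity $\mu(f^{-1}(N))=0$, and for every $x\notin f^{-1}(N)$ one has $f(x)\notin N$, so $\varphi_{n_k}(f(x))\to\varphi(f(x))$; that is, $\varphi_{n_k}\circ f\to\varphi\circ f$ $\mu$-a.e. On the other hand $\varphi_{n_k}\circ f\xrightarrow{\mu}\psi$, so extracting a further subsequence gives $\varphi_{n_{k_j}}\circ f\to\psi$ $\mu$-a.e. as well. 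By uniqueness of almost everywhere limits, $\psi=\varphi\circ f=T_f\varphi$ as elements of $L^0_\mu(X)$, and since $\mathcal{X}\subseteq L^0_\mu(X)$ this is an equality in $\mathcal{X}$. Thus the graph is closed and $T_f$ is continuous.
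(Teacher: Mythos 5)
Your proof is correct and takes essentially the same route as the paper's: an application of the Closed Graph Theorem, transporting both convergences into $L^0_\mu(X)$ via (C1), extracting successive almost-everywhere convergent subsequences, and using the non-singularity of $f$ to pull the exceptional null set back through $f^{-1}$ before concluding $\psi=\varphi\circ f$ by uniqueness of a.e.\ limits. Your explicit remark that non-singularity does \emph{not} make composition continuous on $L^0_\mu(X)$ — which is why the subsequence detour is needed — is a nice clarification, but the argument itself coincides with the paper's.
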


\begin{proof}
Let us show that $T_f$ is continuous by using the Closed Graph Theorem. For that, let $(\varphi_n)_{n\geq 1}$ be a sequence of functions in $\mathcal{X}$ converging to $\varphi \in \mathcal{X}$ and such that $T_f \varphi_n$ converges to $\psi\in \mathcal{X}$. By (C1), we have that $\varphi_n\xrightarrow{\mu}\varphi$, so that there exists a subsequence $(n_k)_{k\geq 1}$ such that $\varphi_{n_k}\to \varphi$ almost everywhere (see \cite{royden1988realanalysis}), that is, there exists $A\in \mathcal{B}$ such that $\mu(X\setminus A)=0$ and \[\varphi_{n_k}(x)\to \varphi(x) \hspace*{0.5cm} \forall x\in A.\]

    Analogously, we have that there exists a subsequence $(n_{k_l})_{l\geq 1}$ and $B\in \mathcal{B}$ such that $\mu(X\setminus B)=0$ and \[\varphi_{n_{k_l}}\circ f(x)\to \psi(x) \hspace*{0.5cm} \forall x\in B.\]

    Since $f$ is non-singular, we have that $\mu(X\setminus f^{-1}(A))=\mu(f^{-1}(X\setminus A))=0$. Now let \[C=f^{-1}(A)\cap B.\]
    We have that $\mu(X\setminus C)=0$,
    \[\varphi_{n_{k_l}}\circ f(x)\to \varphi \circ f(x) \hspace*{0.5cm} \forall x\in C\]
    and 
    \[\varphi_{n_{k_l}}\circ f(x)\to \psi(x) \hspace*{0.5cm} \forall x\in C,\]
    so that $\psi(x)=\varphi \circ f(x)$ for all $x\in C$, showing that $T_f$ is continuous.
\end{proof}

Let us next explain what we mean by continuity in condition (C2). For every measurable set $A$ with finite measure, let 
\[
\mathcal{B}(A):=\{B\cap A:B\in \mathcal{B}\}.
\]
We then consider the pseudo-metric $d_A$ on $\mathcal{B}(A)$ given by 
\[
d_A(C,D)=\mu(C\Delta D), \hspace*{0.5cm} C,D\in \mathcal{B}(A).
\]

Condition (C2) can be expressed in a more useful equivalent way.

\begin{lemma}\label{lem-cont}
Condition \emph{(C2)} holds if, and only if, for any $\varepsilon>0$ there is some $\delta>0$ such that, for any sets $A,B\in\mathcal{B}$ with finite measure, 
\[
\mu(A\Delta B)<\delta \Longrightarrow \|\chi_A-\chi_B\|<\varepsilon.
\]
\end{lemma}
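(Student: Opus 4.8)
The plan is to prove the two implications separately, the easy direction being that the uniform condition forces (C2), with the real work lying in the converse.

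For the direction ``uniform condition $\Rightarrow$ (C2)'', I would fix $A\in\mathcal{B}$ of finite measure and a point $C_0\in\mathcal{B}(A)$. Given $\varepsilon>0$, take the $\delta>0$ supplied by the uniform hypothesis. Every $C\in\mathcal{B}(A)$ is a subset of $A$ and hence of finite measure, so $d_A(C,C_0)=\mu(C\Delta C_0)<\delta$ directly yields $\|\chi_C-\chi_{C_0}\|<\varepsilon$. Thus $J_A$ is continuous (in fact uniformly continuous, and with a modulus independent of $A$) at every point, which is exactly (C2).

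For the converse ``(C2) $\Rightarrow$ uniform condition'', I would first reduce the two-set statement to a one-set smallness statement. Pointwise one has $\chi_A-\chi_B=\chi_{A\setminus B}-\chi_{B\setminus A}$, and $\mu(A\setminus B),\mu(B\setminus A)\le\mu(A\Delta B)$. Using $\|-\varphi\|=\|\varphi\|$ and the triangle inequality, $\|\chi_A-\chi_B\|\le\|\chi_{A\setminus B}\|+\|\chi_{B\setminus A}\|$. Hence it suffices to establish the following uniform claim: for every $\varepsilon>0$ there is $\delta>0$ such that $\mu(E)<\delta$ implies $\|\chi_E\|<\varepsilon$, for every $E\in\mathcal{B}$ of finite measure. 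Applying this claim with $\varepsilon/2$ to $E=A\setminus B$ and to $E=B\setminus A$ then gives $\|\chi_A-\chi_B\|<\varepsilon$ whenever $\mu(A\Delta B)<\delta$.

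The heart of the argument, and the step I expect to be the main obstacle, is to extract this \emph{uniform} smallness claim from the merely pointwise continuity of the individual maps $J_A$: condition (C2) only provides, for each fixed $A$, a threshold $\delta=\delta(A,\varepsilon)$, whereas we need one $\delta$ valid across all sets $E$ simultaneously. I would argue by contradiction, concentrating a hypothetical family of counterexamples into a single finite-measure set. If the claim failed for some $\varepsilon_0>0$, choose $E_n\in\mathcal{B}$ with $\mu(E_n)<2^{-n}$ yet $\|\chi_{E_n}\|\ge\varepsilon_0$. Setting $A:=\bigcup_{n\ge1}E_n$ gives $\mu(A)\le\sum_{n\ge1}2^{-n}=1<\infty$, so $A$ has finite measure and every $E_n$ lies in $\mathcal{B}(A)$. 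Since $\mu(E_n\Delta\varnothing)=\mu(E_n)\to0$, continuity of the single map $J_A$ at $\varnothing$ forces $\chi_{E_n}=J_A(E_n)\to J_A(\varnothing)=0$, i.e. $\|\chi_{E_n}\|\to0$, contradicting $\|\chi_{E_n}\|\ge\varepsilon_0$. This proves the smallness claim, and together with the reduction above it yields the uniform condition.
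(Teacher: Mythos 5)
Your proof is correct and follows essentially the same route as the paper's: reduce to the case $B=\varnothing$ via the decomposition $\chi_A-\chi_B=\chi_{A\setminus B}-\chi_{B\setminus A}$, then derive the uniform smallness claim by contradiction, concentrating the counterexample sets $E_n$ (with summable measures) into a single finite-measure union $A$ on which continuity of $J_A$ at $\varnothing$ fails. Your write-up merely spells out the details that the paper leaves implicit, including the easy converse direction.
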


\begin{proof} It suffices to show that (C2) implies the stated stronger condition. Moreover, by definition of $A\Delta B$ and since $\chi_A-\chi_B = \chi_{A\setminus B} - \chi_{B\setminus A}$, it suffices to consider the case of $B=\varnothing$. Now, if the condition did not hold, one could find $\varepsilon>0$ and sets $A_n\in\mathcal{B}$ with $\mu(A_n)<n^{-2}$ such that $\|\chi_{A_n}\|\geq\varepsilon$, $n\geq 1$. But then (C2) would not hold for the set $A=\bigcup_nA_n$ of finite measure.
\end{proof}

As the proof shows it suffices, in (C2) and in the above condition, to have continuity at $B=\varnothing$.

\begin{remark}\label{remark h4}
Condition (H4) in \cite{bayart2018topological} can be expressed in the following way: for any $\varepsilon>0$ there is some $\delta>0$ such that, for any $\varphi\in\mathcal{X}$ and any set $A\in \mathcal{B}$, if $|\varphi|\leq \chi_A$ in the pointwise sense and $\mu(A)<\delta$ then $\|\varphi\|<\varepsilon$. Thus, (H4) clearly implies (C2). We do not know if the conditions are in fact equivalent.
\end{remark}

The following lemma can be found in Whitley \cite{whitley1978} if $\mathcal{X}=L^2(X,\mathcal{B},\mu)$, see also \cite[Corollary 2.2.8]{singh1993composition}; in the sequel we will only need part (a).

\begin{lemma}\label{Whitley}
\emph{(a)} Suppose that $\mathcal{X}$ satisfies conditions \emph{(H1)} and \emph{(C1)}. If $T_f$ has dense range then $f^{-1}(\mathcal{B})=_{\emph{\text{ess}}}\mathcal{B}$.

\emph{(b)} Suppose that $\mathcal{X}$ satisfies conditions \emph{(H2)} and \emph{(C2)}. If $f^{-1}(\mathcal{B})=_{\emph{\text{ess}}}\mathcal{B}$ then $T_f$ has dense range.
\end{lemma}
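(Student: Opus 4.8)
The plan is to treat the two parts separately, in each case reducing everything to indicator functions of finite-measure sets and then exploiting $\sigma$-finiteness to pass from finite-measure sets to all of $\mathcal{B}$.

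For part (a) I would fix $A\in\mathcal{B}$ of finite measure, so that $\chi_A\in\mathcal{X}$ by (H1). Since $T_f$ has dense range, I would choose $\varphi_n\in\mathcal{X}$ with $\varphi_n\circ f=T_f\varphi_n\to\chi_A$ in $\mathcal{X}$; condition (C1) upgrades this to convergence in measure, and passing to a subsequence gives $\varphi_n\circ f\to\chi_A$ almost everywhere. The crucial observation is that each $\varphi_n\circ f$ is $f^{-1}(\mathcal{B})$-measurable, hence so is $g:=\limsup_n(\varphi_n\circ f)$, and $g=\chi_A$ a.e. Consequently the level set $\{g\geq 1/2\}$ lies in $f^{-1}(\mathcal{B})$, i.e. equals $f^{-1}(B)$ for some $B\in\mathcal{B}$, and since $g=\chi_A$ a.e.\ one checks that $A\,\Delta\,f^{-1}(B)$ is contained in the exceptional null set, giving $\mu(A\,\Delta\,f^{-1}(B))=0$. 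Finally I would remove the finite-measure restriction: writing $X=\bigcup_k F_k$ with $F_k\uparrow X$ of finite measure, I apply the above to each $A\cap F_k$ to obtain $B_k$, and set $B=\bigcup_k B_k$; the inclusion $(\bigcup_k A_k)\,\Delta\,(\bigcup_k f^{-1}(B_k))\subseteq\bigcup_k(A_k\,\Delta\,f^{-1}(B_k))$ then yields $\mu(A\,\Delta\,f^{-1}(B))=0$ for arbitrary $A$.

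For part (b), by (H2) it suffices to approximate in $\mathcal{X}$ every simple function vanishing off a finite-measure set by elements of the range of $T_f$, and by linearity this reduces to approximating $\chi_A$ for $A\in\mathcal{B}$ of finite measure. Using $f^{-1}(\mathcal{B})=_{\text{ess}}\mathcal{B}$, I obtain $B\in\mathcal{B}$ with $\mu(A\,\Delta\,f^{-1}(B))=0$, and note that $\chi_{f^{-1}(B)}=\chi_B\circ f=T_f\chi_B$ equals $\chi_A$ as an element of $\mathcal{X}$. The obstacle is that $B$ may have infinite measure, so $\chi_B$ need not belong to $\mathcal{X}$ and $T_f\chi_B$ is not a legitimate element of the range. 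I would circumvent this by truncating: with $F_k\uparrow X$ of finite measure, set $B_k=B\cap F_k$, so that $\chi_{B_k}\in\mathcal{X}$ by (H1) and $\chi_{f^{-1}(B_k)}=T_f\chi_{B_k}$ genuinely lies in the range. Continuity of measure from below gives $\mu(f^{-1}(B)\setminus f^{-1}(B_k))\to 0$, hence $\mu(A\,\Delta\,f^{-1}(B_k))\to 0$; since $A$ and $f^{-1}(B_k)\subseteq f^{-1}(B)$ both have finite measure (the latter because $\mu(f^{-1}(B))=\mu(A)$), condition (C2) in the form of \cref{lem-cont} yields $\|\chi_A-\chi_{f^{-1}(B_k)}\|\to 0$. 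Thus $\chi_A$ lies in the closure of the range, and (b) follows.

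The main obstacle in both parts is the finite-measure bookkeeping. In (a) the essential surjectivity must first be established on finite-measure sets, where (H1) makes $\chi_A$ available, and only then propagated to all of $\mathcal{B}$ through symmetric-difference estimates; in (b) the set $B$ produced by essential equality can have infinite measure and must be exhausted by finite-measure pieces before (H1) and (C2) become applicable.
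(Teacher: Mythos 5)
Your proof is correct and takes essentially the same route as the paper: part (b) matches the paper's argument (choose $B$ with $\mu(A\Delta f^{-1}(B))=0$, exhaust by finite-measure sets $B_k$, observe $\mu(A\Delta f^{-1}(B_k))\to 0$, and conclude via (C2) through \cref{lem-cont}), while part (a) spells out in full the Whitley-type argument that the paper merely cites from \cite{whitley1978}, including the $\sigma$-finite patching step needed for sets of infinite measure. The only micro-adjustment: if the scalar field is $\mathbb{C}$, take the $\limsup$ of the real parts of $\varphi_{n}\circ f$ (or work on the set where the a.e.\ subsequence converges), which changes nothing since $\chi_A$ is real-valued.
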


\begin{proof} For (a) one can follow the proof of \cite[Lemma~1]{whitley1978}. For (b), it suffices to show by (H2) that, for any set $A\in\mathcal{B}$ with finite measure, $\chi_A$ is in the closure of the range of $T_f$. By the hypothesis there is some set $B\in \mathcal{B}$ such that $\mu(A\Delta f^{-1}(B))=0$. Since $\mu$ is $\sigma$-finite, $B$ is the union of an increasing sequence $(B_k)_{k\geq 1}$ of measurable sets with finite measure. Then $\mu(A\Delta f^{-1}(B_k))=\mu((A\setminus f^{-1}(B_k))\cup(f^{-1}(B_k)\setminus A))=\mu(A\setminus f^{-1}(B_k))\to 0$, so that, by (C2) via \cref{lem-cont}, $T_f\chi_{B_k}=\chi_{B_k}\circ f=\chi_{f^{-1}(B_k)}\to \chi_A$ in $\mathcal{X}$, and $\chi_{B_k}\in \mathcal{X}$ by (H2).
\end{proof}

By \cref{remark inclusion}, \cref{Whitley}(a) and \cite[Theorem~1.1]{bayart2018topological}, it follows the next theorem. For the sake of completeness, we shall adapt the proof of \cite[Theorem~1.1]{bayart2018topological} to prove it explicitly using (C1) instead of (H3).

\begin{theorem}\label{th: hc implies scc}
    Suppose that $\mathcal{X}$ satisfies conditions \emph{(H1)} and \emph{(C1)}. If $T_f$ is hypercyclic, then $f^{-1}(\mathcal{B})=_{\emph{\text{ess}}}\mathcal{B}$ and $f$ satisfies \emph{(HRC)}.
\end{theorem}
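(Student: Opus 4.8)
The plan is to deduce the theorem by adapting the proof of \cite[Theorem~1.1]{bayart2018topological}, the only structural change being that the continuity of the inclusion is now encoded by (C1) rather than (H3); by \cref{remark inclusion} these are equivalent, so all the measure-theoretic translations go through. First I would dispose of the essential-surjectivity statement: a hypercyclic operator has dense range (the range contains all but the first vector of a dense orbit, which stays dense), so \cref{Whitley}(a), which needs only (H1) and (C1), immediately yields $f^{-1}(\mathcal{B})=_{\text{ess}}\mathcal{B}$. It then remains to establish (HRC), and by \cref{rem: BDP}(b) it suffices to fix a set $A$ of finite positive measure and $\varepsilon\in(0,1)$ and to produce $B\subseteq A$ and $k\geq 1$ with $\mu(A\setminus B)<\varepsilon$, $\mu(f^{-k}(B))<\varepsilon$ and $\mu^*(f^{k}(B))<\varepsilon$.

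The heart of my approach is a reduction that converts the dynamics into the three measure conditions simultaneously. Suppose we can find $k\geq 1$ and $\varphi\in\mathcal{X}$ realizing, in measure, the three-term pattern $(0,\chi_A,0)$ along the times $0,k,2k$, namely
\[
\varphi\approx 0,\qquad \varphi\circ f^{k}=T_f^{k}\varphi\approx\chi_A,\qquad \varphi\circ f^{2k}=T_f^{2k}\varphi\approx 0,
\]
quantified as $\mu(\{|\varphi|\geq\tfrac12\})<\varepsilon$, $\mu(\{|\varphi\circ f^{k}-\chi_A|\geq\tfrac12\})<\varepsilon$ and $\mu(\{|\varphi\circ f^{2k}|\geq\tfrac12\})<\varepsilon$. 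Writing $P=\{\varphi>\tfrac12\}$ and $S=\{|\varphi|\geq\tfrac12\}$, I would set $B:=A\cap\{\varphi\circ f^{k}>\tfrac12\}=A\cap f^{-k}(P)$. Then on $A$ the inclusion $A\setminus B\subseteq\{|\varphi\circ f^{k}-\chi_A|\geq\tfrac12\}$ gives the first estimate; from $B\subseteq f^{-k}(P)$ we get $f^{k}(B)\subseteq P\subseteq S$, whence $\mu^*(f^{k}(B))\leq\mu(S)<\varepsilon$; and $f^{-k}(B)\subseteq f^{-2k}(P)\subseteq f^{-2k}(S)=\{|\varphi\circ f^{2k}|\geq\tfrac12\}$ gives $\mu(f^{-k}(B))<\varepsilon$. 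Here (H1) guarantees $\chi_A\in\mathcal{X}$, so the middle approximation is meaningful, and (C1) is exactly what lets me pass from closeness in $\mathcal{X}$ (as produced by the dynamics) to the displayed closeness in measure: by continuity of the inclusion, for any threshold $\xi>0$ and tolerance $\eta>0$ the set $\{\theta\in\mathcal{X}:\mu(\{|\theta-\psi|\geq\xi\})<\eta\}$ is an $\mathcal{X}$-neighbourhood of $\psi$, which is the only point at which (C1) replaces (H3).

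It therefore remains to produce, for each $A$ and $\varepsilon$, a common time $k$ and a vector $\varphi$ realizing the pattern $(0,\chi_A,0)$ along $0,k,2k$. This is the step I expect to be the main obstacle. A single instance of topological transitivity yields only \emph{two} of the three conditions: approximating $\chi_A$ by $T_f^{k}\varphi$ with $\varphi\approx 0$ produces the fill-up $\mu(A\setminus B)<\varepsilon$ together with the forward runaway $\mu^*(f^{k}(B))<\varepsilon$, whereas the reverse transition ($\psi\approx\chi_A$ with $T_f^{k}\psi\approx 0$) produces the fill-up together with the backward runaway $\mu(f^{-k}(B))<\varepsilon$; but the quantities $\mu(f^{-k}(B))$ and $\mu^*(f^{k}(B))$ are genuinely independent, so a single return of the orbit cannot control both.

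Coordinating both runaways at one common time $k$ is precisely where hypercyclicity is effectively upgraded to weak-mixing strength, and this coordination is the substantive content of \cite[Theorem~1.1]{bayart2018topological} — mirrored by this paper's result that hypercyclicity and weak mixing coincide in the present setting. I would carry it out following \cite{bayart2018topological}, exploiting the essential surjectivity $f^{-1}(\mathcal{B})=_{\text{ess}}\mathcal{B}$ obtained above and the multiplicative structure of the composition operator (which makes the intermediate sets $\{\varphi\circ f^{j}>\tfrac12\}$ exactly the preimages $f^{-j}(P)$ used in the computation), and at each measure-theoretic step I would simply verify that (C1), via \cref{lemma}, suffices in place of (H3).
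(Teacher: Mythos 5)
Your first step (dense range plus \cref{Whitley}(a) gives $f^{-1}(\mathcal{B})=_{\text{ess}}\mathcal{B}$) matches the paper, and your measure-theoretic bookkeeping is sound: if one \emph{had} the three-term pattern $(0,\chi_A,0)$ at times $0,k,2k$, your set $B=A\cap f^{-k}(P)$ would indeed satisfy all three estimates in \eqref{eq: HRC1}. But the step you flag as the ``main obstacle'' is a genuine gap, and your diagnosis of it is wrong in a way that matters. Topological transitivity only furnishes two-term patterns ($\varphi\in U$, $T_f^k\varphi\in V$); your pattern along the arithmetic progression $0,k,2k$ is a multiple-transitivity property that plain hypercyclicity does not supply. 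Nor can you appeal to a general upgrade to weak mixing: hypercyclic operators need not be weakly mixing in general, and the paper's equivalence of hypercyclicity and weak mixing for composition operators is proved \emph{via} this theorem together with its converse (\cref{th: ssc implies hc}), so invoking it here would be circular.

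The missing idea --- which is the actual content of \cite[Theorem~1.1]{bayart2018topological} and of the paper's proof --- is that a \emph{single} application of transitivity does control both runaways simultaneously, directly refuting your claim that ``a single return of the orbit cannot control both.'' The trick is to aim at two distinct nonzero levels: choose $\varphi$ and $k$ with $\|\varphi-2\chi_A\|<\delta$ and $\|\varphi\circ f^k-4\chi_A\|<\delta$, and set $D=\{|\varphi-2|<1\}$, $C=\{|\varphi-4|<1\}$, $B=D\cap f^{-k}(C)\cap A$. Since the value $4$ is at distance $\geq 1$ from both $2$ and $0$, one has $C\subseteq\{|\varphi-2\chi_A|\geq 1\}$, whence $\mu^*(f^k(B))\leq\mu(C)<\varepsilon$; since the value $2$ is at distance $\geq 1$ from both $4$ and $0$, one has $f^{-k}(D)\subseteq\{|\varphi\circ f^k-4\chi_A|\geq 1\}$, whence $\mu(f^{-k}(B))\leq\mu(f^{-k}(D))<\varepsilon$; and $\mu(A\setminus B)<\varepsilon$ as in your argument. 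The function $\varphi$ carries more information than the pair $(B,f^k(B))$: its level sets at both times control image and preimage at once, which is exactly the coordination you believed impossible. The passage from norm-closeness to closeness in measure uses (C1) via \cref{lemma}, just as you indicate, so that part of your plan is fine. Your fallback of ``following \cite{bayart2018topological}'' would of course uncover this device, and indeed by \cref{remark inclusion} and \cref{Whitley}(a) the theorem even follows by directly citing \cite[Theorem~1.1]{bayart2018topological}; but as a self-contained argument your proposal stops precisely at the step that constitutes the proof, and the route you sketch for it (the $0,k,2k$ pattern) is not one that hypercyclicity alone can complete.
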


\begin{proof}
    Since $T_f$ is hypercyclic, it has dense range, which implies that $f^{-1}(\mathcal{B})=_{\text{ess}}\mathcal{B}$ by \cref{Whitley}(a).

    Let $A\in \mathcal{B}$ be a set of finite measure and $0<\varepsilon<1$. By \cref{rem: BDP}(b) it suffices to find a measurable subset $B$ of $A$ and some $k\geq 1$ such that \eqref{eq: HRC1} holds. Now, by (H1) and (C1) there is some $\delta>0$ such that
    \begin{align*}
        \|\psi-2\chi_A\|<\delta &\implies d(\psi, 2\chi_A)<\frac{\varepsilon}{2},\\
        \|\psi-4\chi_A\|<\delta &\implies d(\psi, 4\chi_A)<\frac{\varepsilon}{2}.
    \end{align*}
    By hypercyclicity of $T_f$, there exist $k\geq 1$ and $\varphi\in \mathcal{X}$ such that
    \[\|\varphi-2\chi_A\|<\delta \hspace*{0.5cm} \text{and} \hspace*{0.5cm} \|\varphi\circ f^k-4\chi_A\|<\delta.\]

    By \cref{lemma}, we have that 
    \begin{equation}\label{eq: measure of sets}
        \mu(\{x\in X: |\varphi(x)-2\chi_A(x)|\geq 1\})<\frac{\varepsilon}{2} \hspace*{0.5cm} \text{and} \hspace*{0.5cm} \mu(\{x\in X: |\varphi\circ f^k(x)-4\chi_A(x)|\geq 1\})<\frac{\varepsilon}{2}.
    \end{equation}

    Let \[C=\{x\in X: |\varphi (x)-4|<1\}\hspace*{0.5cm} \text{and} \hspace*{0.5cm} 
    D=\{x\in X: |\varphi (x)-2|<1\},\]
    and define \[B=D\cap f^{-k}(C)\cap A.\]

    First, note that \[A\setminus D\subseteq \{x\in X: |\varphi(x)-2\chi_A(x)|\geq 1\}\] and \[A\setminus f^{-k}(C)\subseteq \{x\in X: |\varphi\circ f^k(x)-4\chi_A(x)|\geq 1\},\]
    so \labelcref{eq: measure of sets} implies that $\mu(A\setminus B)<\varepsilon$.

    Next, we have that
    \[ f^{-k}(D)\subseteq \{x\in X: |\varphi \circ f^k(x)-4 \chi_A(x)|\geq 1\}, \]
    and hence, by \labelcref{eq: measure of sets}, $\mu(f^{-k}(B))\leq \mu (f^{-k}(D))<\varepsilon$.

    Finally, \[C\subseteq \{x\in X: |\varphi(x)-2\chi_A(x)|\geq 1\}.\]
    Since $f^k(B)\subseteq C$, by \labelcref{eq: measure of sets} we have that $\mu^*(f^k(B))<\varepsilon$.
\end{proof}

In the next theorem, we will improve \cite[Theorem~1.1]{bayart2018topological} by showing that (HRC) implies not only hypercyclicity, but weak mixing under conditions (H2) and (C2). We first note that, by the triangle inequality for $(A,B)\mapsto\mu(A\Delta B)$ and since $f$ is non-singular, we have the following:

\begin{lemma}\label{lemma: pseudometric}
    If $f^{-1}(\mathcal{B})=_{\emph{\text{ess}}}\mathcal{B}$, then for every $A\in \mathcal{B}$ and $k\geq 1$, there exists $B\in \mathcal{B}$ such that \[\mu(f^{-k}(B)\Delta A)=0.\]
\end{lemma}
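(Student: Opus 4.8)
The plan is to argue by induction on $k\geq 1$. The case $k=1$ is exactly the hypothesis $f^{-1}(\mathcal{B})=_{\text{ess}}\mathcal{B}$: given $A$, it furnishes a set $B\in\mathcal{B}$ with $\mu(f^{-1}(B)\Delta A)=0$. So the work is entirely in the inductive step, and the only facts I intend to use are three elementary ones. First, the preimage operator commutes with symmetric differences, that is, $f^{-1}(S\Delta T)=f^{-1}(S)\Delta f^{-1}(T)$ for all measurable $S,T$ (indeed $f^{-1}$ commutes with every Boolean operation). Second, non-singularity of $f$ means precisely that $\mu(N)=0$ implies $\mu(f^{-1}(N))=0$. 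Third, the map $(S,T)\mapsto\mu(S\Delta T)$ satisfies the triangle inequality.

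For the inductive step, assume the statement holds for some $k\geq 1$ and fix $A\in\mathcal{B}$. First I would apply the hypothesis $f^{-1}(\mathcal{B})=_{\text{ess}}\mathcal{B}$ to $A$ to obtain $C\in\mathcal{B}$ with $\mu(f^{-1}(C)\Delta A)=0$. Next I would apply the induction hypothesis to $C$ (in place of $A$) to obtain $B\in\mathcal{B}$ with $\mu(f^{-k}(B)\Delta C)=0$. The point is then to transport this last estimate through one more application of $f^{-1}$: using that $f^{-1}$ commutes with $\Delta$ and that $f$ is non-singular,
\[
\mu\big(f^{-(k+1)}(B)\,\Delta\, f^{-1}(C)\big)=\mu\big(f^{-1}(f^{-k}(B)\Delta C)\big)=0,
\]
since $f^{-(k+1)}(B)=f^{-1}(f^{-k}(B))$ and $\mu(f^{-k}(B)\Delta C)=0$.

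Finally, combining the two null symmetric differences via the triangle inequality gives
\[
\mu\big(f^{-(k+1)}(B)\,\Delta\, A\big)\leq \mu\big(f^{-(k+1)}(B)\,\Delta\, f^{-1}(C)\big)+\mu\big(f^{-1}(C)\,\Delta\, A\big)=0,
\]
which is exactly the statement for $k+1$ and closes the induction. I do not expect any genuine obstacle here: the proof is a short induction, and the only points that deserve a moment's care are the commutation of $f^{-1}$ with symmetric differences and the fact that a single application of $f^{-1}$ preserves null sets by non-singularity. Peeling off one factor of $f^{-1}$ at a time in this way means I never need to invoke non-singularity of the iterates $f^{k}$ directly, only the one-step non-singularity of $f$ itself.
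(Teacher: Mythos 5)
Your proof is correct and follows essentially the same route as the paper, which states the lemma as an immediate consequence of the triangle inequality for $(S,T)\mapsto\mu(S\Delta T)$ and the non-singularity of $f$ -- your induction simply makes explicit the one-step peeling argument the paper leaves implicit. Nothing further is needed.
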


We will use a standard notation: given $U,V\subseteq \mathcal{X}$, let 
\[
N(U,V)=\{n\in \mathbb{N}_0: T_f^n(U)\cap V\neq \varnothing\}.
\]
Hence, $T_f$ is weakly mixing if, and only if, for every non-empty open sets $U,V,\Tilde{U},\Tilde{V}\subseteq \mathcal{X}$, we have that \[N(U,V)\cap N(\Tilde{U},\Tilde{V})\neq \varnothing.\]

\begin{theorem}\label{th: ssc implies hc}
    Suppose that $\mathcal{X}$ satisfies conditions \emph{(H2)} and \emph{(C2)}. Assume that $f^{-1}(\mathcal{B})=_{\emph{\text{ess}}}\mathcal{B}$ and that $f$ satisfies \emph{(HRC)}. Then $T_f$ is weakly mixing.
\end{theorem}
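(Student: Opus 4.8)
The plan is to prove weak mixing by verifying the criterion stated just before the theorem: for every four nonempty open sets $U, V, \tilde U, \tilde V \subseteq \mathcal{X}$, I must find a single $n$ with $n \in N(U,V) \cap N(\tilde U, \tilde V)$. By (H2), the simple functions vanishing outside a set of finite measure are dense, so I may shrink each open set and assume its centre is such a simple function. Thus it suffices to handle $U, \tilde U$ centred at simple functions $\varphi, \tilde\varphi$ and $V, \tilde V$ centred at simple functions $\psi, \tilde\psi$, all supported on a common set $A \in \mathcal{B}$ of finite measure (take $A$ to be the union of the four supports). I will produce one $n = n_k$ (from the (HRC) sequence applied to $A$) and functions close to $\varphi$ in $U$, close to $\tilde\varphi$ in $\tilde U$, whose images under $T_f^{n}$ land in $V$ and $\tilde V$ respectively.

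\emph{The core construction.} Apply (HRC) to $A$ to get sets $B_k \subseteq A$ and integers $n_k \to \infty$ with $\mu(A \setminus B_k) \to 0$, $\mu(f^{-n_k}(B_k)) \to 0$, and $\mu^*(f^{n_k}(B_k)) \to 0$. The idea is that on the large set $B_k$ the forward image $f^{n_k}(B_k)$ is measure-theoretically negligible, so I can prescribe the target value there essentially for free, while the preimage $f^{-n_k}(B_k)$ is also small, so composing with $f^{n_k}$ does not damage the source approximation. Concretely, using $f^{-1}(\mathcal{B}) =_{\text{ess}} \mathcal{B}$ and \cref{lemma: pseudometric}, for the target $\psi = \sum_j c_j \chi_{C_j}$ I choose a preimage function: for each piece I find $D_j \in \mathcal{B}$ with $\mu(f^{-n_k}(D_j) \Delta (\text{the right set})) = 0$, so that $g := \sum_j c_j \chi_{D_j}$ satisfies $g \circ f^{n_k} = \psi$ off a null set on the relevant region. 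I then glue: set the approximant to equal $\varphi$ (the source datum) on the complement of the forward image, and to equal the pullback $g$ on a set mapping onto $f^{n_k}(B_k)$. Since $\mu^*(f^{n_k}(B_k)) \to 0$, the glueing set is small, so the source approximation error $\|(\cdot) - \varphi\|$ is controlled by (C2) via \cref{lem-cont}; and since $\mu(f^{-n_k}(B_k)) \to 0$, the image $T_f^{n_k}(\cdot)$ agrees with $\psi$ except on a small set, again controlled by (C2).

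\emph{Handling the product (weak mixing).} The crucial point is that the \emph{same} index $n_k$ must work simultaneously for the pair $(\varphi \to \psi)$ and the pair $(\tilde\varphi \to \tilde\psi)$. Because (HRC) gives a single sequence $B_k, n_k$ depending only on $A$ — and I chose $A$ to dominate all four supports — the same $n_k$ serves both coordinates at once. For each $k$ I build one source function for the first coordinate and one for the second, each within the prescribed open set and each mapping under $T_f^{n_k}$ into the prescribed target open set, with all four errors tending to $0$ as $k \to \infty$. Taking $k$ large enough then puts $n_k$ in $N(U,V) \cap N(\tilde U, \tilde V)$, proving weak mixing.

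\emph{The main obstacle} I anticipate is the simultaneous bookkeeping of the two approximation requirements on each glued function: I need the source function to stay near $\varphi$ (so the glueing modification must occur on a set of small measure, guaranteed by $\mu^*(f^{n_k}(B_k)) \to 0$) while its $n_k$-th image stays near $\psi$ (requiring that the deviation set $f^{-n_k}(B_k)$ and the region where $\varphi$ fails to pull back correctly both be small). Reconciling these — ensuring the pullback function $g$ is genuinely in $\mathcal{X}$ (via (H2), since it is simple with finite-measure support) and that the two error estimates use \emph{disjoint} small sets whose measures are jointly driven to zero by the three (HRC) limits — is where the argument must be carried out carefully; the product structure itself adds no new difficulty once the single-coordinate glueing is set up, since the shared index $n_k$ comes for free from the formulation of (HRC).
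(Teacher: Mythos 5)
Your proposal is correct and follows essentially the same route as the paper's own proof: reduce via (H2) to simple functions supported in a common finite-measure set $A$, apply (HRC) to $A$ to get a shared index, pull back the target values using $f^{-1}(\mathcal{B})=_{\text{ess}}\mathcal{B}$ via \cref{lemma: pseudometric}, glue source values on (most of) $B_k$ with target pullbacks on a small set covering $f^{n_k}(B_k)$, estimate all errors through (C2) via \cref{lem-cont}, and observe that the single index $n_k$ serves both coordinate pairs. The one point to make explicit when writing this up is that \cref{lemma: pseudometric} gives no control on $\mu(D_j)$ itself (it may be infinite, so $g=\sum_j c_j\chi_{D_j}$ need not lie in $\mathcal{X}$); as in the paper, you must intersect each $D_j$ with a measurable cover $C\supseteq f^{n_k}(B_k)$ of measure less than $\varepsilon$ --- this preserves the pullback identity modulo null sets because $B_k\subseteq f^{-n_k}(C)$, and it is precisely what makes your glueing set small, as your appeal to $\mu^*(f^{n_k}(B_k))\to 0$ implicitly assumes.
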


\begin{proof}
    Let $U,V,\Tilde{U},\Tilde{V}$ be non-empty open subsets of $\mathcal{X}$. By (H2), there exist $A\in \mathcal{B}$ with finite measure, simple functions $\psi_1=\sum_{j=1}^{M} a_j \chi_{A_j}$, $\psi_2=\sum_{j=1}^{M} b_j \chi_{B_j}$, $\Tilde{\psi}_1=\sum_{j=1}^{M} \Tilde{a}_j \chi_{\Tilde{A}_j}$, $\Tilde{\psi}_2=\sum_{j=1}^{M} \Tilde{b}_j \chi_{\Tilde{B}_j}$ and $\eta>0$ such that $A_j,B_j,\Tilde{A}_j, \Tilde{B}_j\subseteq A$,
    \[
		B(\psi_1,\eta)\subseteq U, \hspace*{0.5cm} B(\psi_2,\eta)\subseteq V, \hspace*{0.5cm} B(\Tilde{\psi}_1,\eta)\subseteq \Tilde{U} \hspace*{0.5cm} \text{and} \hspace*{0.5cm} B(\Tilde{\psi}_2,\eta)\subseteq \Tilde{V},
		\] 
where $B(\varphi,\eta)$ denotes the open ball of radius $\eta$ around $\varphi$. Let $L=1+\sum_j(|a_j|+|b_j|+|\Tilde{a}_j |+|\Tilde{b}_j|)$.

Condition (C2) implies, via \cref{lem-cont}, that there exists $\varepsilon>0$ such that, for any sets $E,F\in \mathcal{B}$ with finite measure,
    \begin{equation} \label{eq: continuity of J}
        \mu(E\Delta F)<2\varepsilon \implies \|\chi_E-\chi_F\|<\frac{\eta}{4L}.
    \end{equation}

Now, by (HRC), there are some $B\in \mathcal{B}$, $B\subseteq A$, and $k\geq 1$ such that 
\[
		\mu(A\setminus B)<\varepsilon,\hspace*{0.5cm} \mu(f^{-k}(B))<\varepsilon \hspace*{0.5cm} \text{and} \hspace*{0.5cm} \mu^*(f^{k}(B))<\varepsilon,
\]
see \cref{rem: BDP}(b). We will show that $k\in N(U,V)\cap N(\Tilde{U},\Tilde{V})$. For that, let $C\in \mathcal{B}$ be such that $f^k(B)\subseteq C$ and $\mu(C)<\varepsilon$. 
		
		Let $\gamma_1=\sum_j a_j \chi_{A_j\cap B}$. Since
\begin{equation}\label{eq:1}
\mu(A_j\Delta (A_j\cap B))=\mu(A_j\setminus B)\leq \mu (A\setminus B)<\varepsilon\leq 2\varepsilon,
\end{equation}
    we have by \labelcref{eq: continuity of J} that
\begin{equation}\label{eq:2}
\|\chi_{A_j}-\chi_{A_j\cap B}\|<\frac{\eta}{4L}\leq \frac{\eta}{2L}.
\end{equation}
    By the triangle inequality, 
\begin{equation}\label{eq:3}
\|\psi_1-\gamma_1\|<\frac{\eta}{2}.
\end{equation}
    If we define $\gamma_2=\sum_j b_j \chi_{B_j\cap B}$, by a similar argument we get $\|\psi_2-\gamma_2\|<\eta/2$.

    Now since $f^{-1}(\mathcal{B})=_{\text{ess}}\mathcal{B}$, by \cref{lemma: pseudometric}, for every $j$ there exists $D_j\in \mathcal{B}$ such that \[\mu(f^{-k}(D_j)\Delta (B_j\cap B))=0.\] 
Note that $f^{-k}(D_j)$ has finite measure. 
    Let $C_j=D_j\cap C$ and define $\varphi$ by:
    \begin{equation}\label{eq: definition of phi}
        \varphi=\sum_j a_j\chi_{(A_j\cap B)\setminus \bigcup_i C_i}+\sum_j b_j \chi_{C_j}. 
    \end{equation}

    First, let us show that $\varphi\in U$. Since we have that each $C_j\subseteq C$,
    \[\mu\Big((A_j\cap B)\Delta \Big((A_j\cap B)\setminus \bigcup_i C_i\Big)\Big)=\mu \Big(A_j\cap B \cap \bigcup_i C_i\Big)\leq \mu\Big(\bigcup_i C_i\Big)\leq \mu(C)<\varepsilon \]
    and \[\mu(\varnothing \Delta C_j)=\mu(C_j)\leq \mu(C)<\varepsilon\leq 2\varepsilon,\]
    thus, by \labelcref{eq: continuity of J}, \eqref{eq:1}, \eqref{eq:2} and the triangle inequality for $(A,B)\mapsto\mu(A\Delta B)$, we obtain 
    \[\|\chi_{A_j\cap B}-\chi_{(A_j\cap B)\setminus \bigcup_i C_i}\|<\frac{\eta}{2L} \hspace*{0.5cm} \text{and} \hspace*{0.5cm} \|\chi_{C_j}\|<\frac{\eta}{4L}\leq\frac{\eta}{2L}. \]
    
    Hence, using the triangle inequality,
    \[\|\gamma_1-\varphi\|<\frac{\eta}{2},\]
    showing with \eqref{eq:3} that $\varphi\in U$. 
    
    To see that $\varphi \circ f^k\in V$, note that, by \labelcref{eq: definition of phi},
    \[\varphi\circ f^k=\sum_j a_j\chi_{f^{-k}((A_j\cap B)\setminus \bigcup_i C_i)}+\sum_j b_j \chi_{f^{-k}(C_j)}. \]
    Since 
    \[\mu\Big(\varnothing \Delta \Big(f^{-k}\Big((A_j\cap B)\setminus \bigcup_i C_i\Big)\Big)\Big)\leq \mu(f^{-k}(B))<\varepsilon \]
    and \[\mu(f^{-k}(C_j) \Delta (B_j\cap B))\leq \mu(f^{-k}(D_j) \Delta (B_j\cap B))=0,\]
    where $f^{-k}(C_j)$ has finite mesure,
    by the same argument as done above we get that \[\|\gamma_2-\varphi \circ f^k\|<\frac{\eta}{2},\]
    which shows that $\varphi \circ f^k\in V$. We have therefore shown that $k\in N(U,V)$.

    Now it remains to show that $k\in N(\Tilde{U},\Tilde{V}).$ The construction will be analogous to the previous one. Define $\Tilde{\gamma}_1=\sum_j \Tilde{a}_j \chi_{\Tilde{A}_j \cap B}$ and $\Tilde{\gamma}_2=\sum_j \Tilde{b}_j \chi_{\Tilde{B}_j \cap B}$, so that $\|\Tilde{\psi}_1-\Tilde{\gamma}_1\|<\eta/2$ and $\|\Tilde{\psi}_2-\Tilde{\gamma}_2\|<\eta/2$. For every $j$, let $\Tilde{D}_j\in \mathcal{B}$ be such that \[\mu(f^{-k}(\Tilde{D}_j)\Delta (\Tilde{B}_j\cap B))=0.\] Let $\Tilde{C}_j=\Tilde{D}_j\cap C$ and define $\Tilde{\varphi}$ by
    \[\Tilde{\varphi}=\sum_j \Tilde{a}_j\chi_{(\Tilde{A}_j\cap B)\setminus \bigcup_i \Tilde{C}_i}+\sum_j \Tilde{b}_j \chi_{\Tilde{C}_j}.\]
    By the same argument as above, we have that $\Tilde{\varphi}\in \Tilde{U}$ and $\Tilde{\varphi}\circ f^k\in \Tilde{V}$, so that also $k\in N(\Tilde{U},\Tilde{V}).$
\end{proof}

\begin{example}\label{rmk: f-1(B)=B in bayart}
    In \cref{th: hc implies scc}, we  cannot use the hypercyclicity of a composition operator $T_f$ in order to conclude that $f^{-1}(\mathcal{B})=\mathcal{B}$, as stated in \cite[Theorem~1]{bayart2018topological}. Indeed, consider $X=\mathbb{Z}\cup \{a\}$ to be the set of integers $\mathbb{Z}$ when we add a point $a\notin \mathbb{Z}$ to it, and define a measure $\mu$ on $\mathcal{P}(X)$ by 
\[
\mu(\{x\})=\begin{cases}
        1/x^2 & x\in X\setminus \{0,a\},\\
        1 & x=0,\\
        0 & x=a.
    \end{cases}
\]
    Let $f(x)=x+1$ if $x\neq a$ and $f(a)=0$, so that $T_{f|_\mathbb{Z}}:\ell^2(\mathbb{Z},\mu)\to \ell^2(\mathbb{Z},\mu)$ is the backward shift, which is hypercyclic (see \cite{grosse2011linear}). Since $\mu(\{a\})=0$, also $T_f:\ell^2(X,\mu)\to \ell^2(X,\mu)$ is hypercyclic. Now suppose that there is a $C\in \mathcal{P}(X)$ such that $f^{-1}(C)=\{a\}$. Since $f(a)=0$, we have that $0\in C$. But now $\{a\}=f^{-1}(C)\supseteq f^{-1}(\{0\})=\{-1,a\}$, which is a contradiction.
\end{example}

By \cref{th: hc implies scc,th: ssc implies hc}, we have now an improved version of \cite[Theorem~1.1]{bayart2018topological}.

\begin{theorem}\label{th: characterization hc}
    Suppose that $\mathcal{X}$ satisfies conditions \emph{(H1)}, \emph{(H2)}, \emph{(C1)} and \emph{(C2)}. The following are equivalent:
    \begin{itemize}
        \item[(a)] $T_f$ is hypercyclic;
        \item[(b)] $T_f$ is weakly mixing;
        \item[(c)] $f^{-1}(\mathcal{B})=_{\emph{\text{ess}}}\mathcal{B}$ and $f$ satisfies \emph{(HRC)}.
    \end{itemize}
\end{theorem}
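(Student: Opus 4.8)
The plan is to establish the three-way equivalence by closing the cycle $(a)\Rightarrow(c)\Rightarrow(b)\Rightarrow(a)$, since the substantive work has already been done in the two preparatory theorems and the remaining link is soft. The key preliminary observation is purely bookkeeping: because $\mathcal{X}$ is here assumed to satisfy \emph{all four} conditions (H1), (H2), (C1) and (C2) simultaneously, every hypothesis required by the results invoked below is available, even though each of those results used only a subset of the four conditions.

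First I would prove $(a)\Rightarrow(c)$. This is exactly \cref{th: hc implies scc}, which needs only (H1) and (C1): if $T_f$ is hypercyclic then $T_f$ has dense range, whence $f^{-1}(\mathcal{B})=_{\text{ess}}\mathcal{B}$ by \cref{Whitley}(a), and $f$ satisfies (HRC). Next, $(c)\Rightarrow(b)$ is precisely \cref{th: ssc implies hc}, which needs only (H2) and (C2): assuming $f^{-1}(\mathcal{B})=_{\text{ess}}\mathcal{B}$ together with (HRC), that theorem produces, for arbitrary non-empty open $U,V,\widetilde{U},\widetilde{V}$, a common return time $k\in N(U,V)\cap N(\widetilde{U},\widetilde{V})$, which is exactly weak mixing. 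So these two implications are immediate citations.

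Finally, for $(b)\Rightarrow(a)$ I would appeal to the general chain recorded in the introduction. A weakly mixing operator is topologically transitive (given non-empty open $U,V\subseteq\mathcal{X}$, transitivity of $T_f\times T_f$ applied to $U\times U$ and $V\times V$ yields an $n$ with $T_f^n(U)\cap V\neq\varnothing$), and topological transitivity is equivalent to hypercyclicity by the Birkhoff transitivity theorem, as stated in the introduction. Hence $T_f$ is hypercyclic, completing the cycle and establishing that $(a)$, $(b)$ and $(c)$ are equivalent.

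I do not expect a genuine obstacle here, as the heavy lifting resides in \cref{th: hc implies scc,th: ssc implies hc}. The only point meriting a word of care is that the transitivity-to-hypercyclicity direction in $(b)\Rightarrow(a)$ relies on $\mathcal{X}$ being separable, which I would note holds in the present framework (so that Birkhoff's theorem applies); beyond that, the proof is a two-line synthesis.
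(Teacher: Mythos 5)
Your proposal is correct and takes essentially the same route as the paper: the paper proves this theorem precisely by combining \cref{th: hc implies scc} for (a)$\Rightarrow$(c) and \cref{th: ssc implies hc} for (c)$\Rightarrow$(b), with (b)$\Rightarrow$(a) being the soft chain (weak mixing $\Rightarrow$ transitivity $\Rightarrow$ hypercyclicity via the Birkhoff transitivity theorem) recorded in the introduction. Your caveat about separability, needed for Birkhoff's theorem, is a fair point that the paper leaves implicit in its standing framework.
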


We have analogous results to \cref{th: hc implies scc,th: ssc implies hc,th: characterization hc} for mixing operators, replacing ``hypercyclic" by ``mixing" and ``(HRC)" by ``(MRC)". In particular, we have the following, which implies \cite[Theorem~1.2]{bayart2018topological}.

\begin{theorem}\label{th: characterization mixing}
    Suppose that $\mathcal{X}$ satisfies conditions \emph{(H1)}, \emph{(H2)}, \emph{(C1)} and \emph{(C2)}. The following are equivalent:
    \begin{itemize}
        \item[(a)] $T_f$ is mixing;
        \item[(b)] $f^{-1}(\mathcal{B})=_{\emph{\text{ess}}}\mathcal{B}$ and $f$ satisfies \emph{(MRC)}.
    \end{itemize}
\end{theorem}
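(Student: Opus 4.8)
The plan is to prove the two implications (a)$\Rightarrow$(b) and (b)$\Rightarrow$(a) by adapting, almost verbatim, the proofs of \cref{th: hc implies scc} and \cref{th: ssc implies hc}, the only structural change being that the existential quantifier ``there exists $k$'' coming from topological transitivity is upgraded to ``for all sufficiently large $k$'' coming from topological mixing. Before starting, I would record the mixing analogue of \cref{rem: BDP}(b): condition (MRC) is equivalent to the statement that for every $A\in\mathcal{B}$ of finite measure and every $\varepsilon>0$ there is some $k_0\geq 1$ such that for every $k\geq k_0$ there exists $B\subseteq A$, $B\in\mathcal{B}$, with $\mu(A\setminus B)<\varepsilon$, $\mu(f^{-k}(B))<\varepsilon$ and $\mu^*(f^k(B))<\varepsilon$. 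The passage from this $\varepsilon$-form to the sequence form of (MRC) is the same diagonalisation as in \cref{rem: BDP}(c): applying it with $\varepsilon=1/m$ yields thresholds $k_0^{(m)}$, which one may take increasing, and setting $B_n$ to be the set furnished by the $m$-th application whenever $k_0^{(m)}\leq n<k_0^{(m+1)}$ makes the three quantities tend to $0$.

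For (a)$\Rightarrow$(b), the equality $f^{-1}(\mathcal{B})=_{\text{ess}}\mathcal{B}$ is obtained exactly as in \cref{th: hc implies scc}, since a mixing operator is in particular transitive and therefore hypercyclic, so that it has dense range and \cref{Whitley}(a) applies. To verify (MRC), I would fix $A$ of finite measure and $0<\varepsilon<1$, choose $\delta>0$ from (H1) and (C1) as in \cref{th: hc implies scc}, and apply topological mixing to the non-empty open balls $B(2\chi_A,\delta)$ and $B(4\chi_A,\delta)$: this produces $k_0\geq 1$ such that $T_f^k(B(2\chi_A,\delta))\cap B(4\chi_A,\delta)\neq\varnothing$ for \emph{every} $k\geq k_0$. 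For each such $k$ there is then a witness $\varphi\in\mathcal{X}$ with $\|\varphi-2\chi_A\|<\delta$ and $\|\varphi\circ f^k-4\chi_A\|<\delta$, and the set $B=B_k$ built from $\varphi$ in the proof of \cref{th: hc implies scc} satisfies the three estimates $<\varepsilon$. This is precisely the $\varepsilon$-form of (MRC) recorded above, so (MRC) follows.

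For (b)$\Rightarrow$(a), I would follow \cref{th: ssc implies hc}, but since mixing concerns a single pair of open sets it suffices to work with one pair $U,V$ rather than the four sets needed for weak mixing. Given non-empty open $U,V$, I would use (H2) to find a set $A$ of finite measure, simple functions $\psi_1\in U$, $\psi_2\in V$ supported in $A$, and $\eta>0$ with $B(\psi_1,\eta)\subseteq U$ and $B(\psi_2,\eta)\subseteq V$, and then extract $\varepsilon>0$ from (C2) via \cref{lem-cont}. Applying the $\varepsilon$-form of (MRC) gives a threshold $k_0$ such that for every $k\geq k_0$ there is $B=B_k\subseteq A$ with $\mu(A\setminus B)<\varepsilon$, $\mu(f^{-k}(B))<\varepsilon$ and $\mu^*(f^k(B))<\varepsilon$. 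For each such $k$ the construction of $\varphi$ from $B$ in \cref{th: ssc implies hc} (using \cref{lemma: pseudometric} to realise the relevant sets as preimages under $f^k$) yields $\varphi\in U$ with $\varphi\circ f^k\in V$, so that $k\in N(U,V)$. Hence $N(U,V)\supseteq\{k:k\geq k_0\}$, which is exactly topological mixing.

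The main point to check — and the only place where anything beyond copying is required — is that the two constructions are genuinely ``pointwise in $k$'': in \cref{th: hc implies scc} the set $B_k$ is manufactured from a single witness $\varphi$ attached to that one value of $k$, and in \cref{th: ssc implies hc} the function $\varphi$ is manufactured from a single set $B$ attached to that one value of $k$. Because neither construction couples different values of $k$, the upgrade from ``some $k$'' to ``all large $k$'' propagates through without modification. The only genuine bookkeeping is the diagonalisation turning the $\varepsilon$-form of (MRC) into its sequence form, which I expect to be routine and identical to \cref{rem: BDP}(c).
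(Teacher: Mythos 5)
Your proposal is correct and is exactly the argument the paper intends: the paper gives no separate proof of \cref{th: characterization mixing}, stating only that it follows by the proofs of \cref{th: hc implies scc,th: ssc implies hc} with ``hypercyclic'' replaced by ``mixing'' and (HRC) by (MRC), and your write-up supplies precisely that adaptation, including the correct observation that both constructions are pointwise in $k$ and the routine diagonalisation between the $\varepsilon$-form and the sequence form of (MRC) (cf.\ \cref{rem: BDP}(d)). One cosmetic remark: in (a)$\Rightarrow$(b) the detour through hypercyclicity (which invokes Birkhoff's theorem and hence separability) is unnecessary, since mixing directly yields dense range — for $k\geq k_0\geq 1$ one has $T_f^k(U)\cap V\neq\varnothing$ and $T_f^k(U)\subseteq\operatorname{ran}(T_f)$ — after which \cref{Whitley}(a) applies as you say.
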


\section{Kitai's Criterion}\label{sec: kitais criterion}
In this section we will give a characterization for composition operators satisfying Kitai's Criterion, when the measurable system is invertible. To this end, we have the following two theorems, analogous to \cref{th: ssc implies hc,th: hc implies scc}, but now first supposing that $f$ has a measurable, non-singular \textit{right} inverse and then a measurable, non-singular \textit{left} inverse.

\begin{theorem}\label{thm: KitaiNec}
    Suppose that $\mathcal{X}$ satisfies \emph{(H1)} and \emph{(C1)}. Suppose also that there exists a non-singular measurable function $g$ such that $f\circ g=id$. If $T_f$ satisfies Kitai's Criterion, then for every $A\in \mathcal{B}$ with finite measure and for every $\varepsilon>0$, there exists a measurable set $B\subseteq A$ such that
    \[
    \mu(A\setminus B)<\varepsilon, \hspace*{0.5cm} \mu(f^{-n}(B))\to 0 \hspace*{0.5cm} \text{and} \hspace*{0.5cm} \mu(g^{-n}(B))\to 0.
    \]
\end{theorem}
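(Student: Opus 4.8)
The plan is to turn the abstract data supplied by Kitai's Criterion into concrete statements about the orbits $\varphi\mapsto\varphi\circ f^n$ and $\psi\mapsto\psi\circ g^n$, and then to run an argument parallel to the proof of \cref{th: hc implies scc}.

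First I would record the algebraic consequence of $f\circ g=\mathrm{id}$. Since $g$ is non-singular, $T_g$ is well defined on $L^0_\mu(X)$, and $T_gT_f\varphi=(\varphi\circ f)\circ g=\varphi\circ(f\circ g)=\varphi$, i.e. $T_gT_f=\mathrm{id}$ on $L^0_\mu(X)$. Let $\mathcal{X}_0,\mathcal{Y}_0$ and $S$ be the data from Kitai's Criterion. The crucial step is to identify $S$ with $T_g$ on $\mathcal{Y}_0$: applying $T_g$ to the relation $T_fS\psi=\psi$ from (iii) yields $S\psi=T_gT_fS\psi=T_g\psi=\psi\circ g$ in $L^0_\mu(X)$, and in particular $\psi\circ g=S\psi\in\mathcal{Y}_0$. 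Since $S$ maps $\mathcal{Y}_0$ into itself, an induction then gives $S^n\psi=\psi\circ g^n$ for every $\psi\in\mathcal{Y}_0$ and $n\geq1$. Consequently, conditions (i) and (ii) read $\varphi\circ f^n\to 0$ in $\mathcal{X}$ for $\varphi\in\mathcal{X}_0$ and $\psi\circ g^n\to 0$ in $\mathcal{X}$ for $\psi\in\mathcal{Y}_0$; by (C1) and \cref{lemma}, both convergences hold in measure as well, so that $\mu(\{|\varphi\circ f^n|\geq1\})\to0$ and $\mu(\{|\psi\circ g^n|\geq1\})\to0$.

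Now fix $A$ of finite measure and $0<\varepsilon<1$. Using (H1) together with (C1) and \cref{lemma}, I would choose, by density, functions $\varphi\in\mathcal{X}_0$ and $\psi\in\mathcal{Y}_0$ so close to $2\chi_A$ that $\mu(\{|\varphi-2\chi_A|\geq1\})<\varepsilon/2$ and $\mu(\{|\psi-2\chi_A|\geq1\})<\varepsilon/2$, and then set
\[
B=A\cap\{x\in X:|\varphi(x)-2|<1\}\cap\{x\in X:|\psi(x)-2|<1\}.
\]
Since $2\chi_A=2$ on $A$, the exceptional sets $A\setminus\{|\varphi-2|<1\}$ and $A\setminus\{|\psi-2|<1\}$ are contained in $\{|\varphi-2\chi_A|\geq1\}$ and $\{|\psi-2\chi_A|\geq1\}$, respectively, so that $\mu(A\setminus B)<\varepsilon$.

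Finally I would verify the two runaway conditions. On $B$ the reverse triangle inequality gives $|\varphi|>1$ and $|\psi|>1$, hence $B\subseteq\{|\varphi|\geq1\}$ and $B\subseteq\{|\psi|\geq1\}$; taking preimages,
\[
f^{-n}(B)\subseteq f^{-n}(\{|\varphi|\geq1\})=\{|\varphi\circ f^n|\geq1\},\qquad g^{-n}(B)\subseteq g^{-n}(\{|\psi|\geq1\})=\{|\psi\circ g^n|\geq1\}.
\]
By the convergences in measure established above, $\mu(f^{-n}(B))\to0$ and $\mu(g^{-n}(B))\to0$, completing the argument. The only genuinely delicate point is the identification $S^n\psi=\psi\circ g^n$; once the abstract right inverse $S$ is pinned down as the concrete composition operator $T_g$, the rest is a routine adaptation of the proof of \cref{th: hc implies scc}, and no control of an outer measure $\mu^*(f^n(B))$ is needed, since $g^{-n}(B)$ is itself measurable.
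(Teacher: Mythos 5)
Your proposal is correct and follows essentially the same route as the paper's proof: the same set $B=A\cap\{|\varphi-2|<1\}\cap\{|\tilde{\varphi}-2|<1\}$ built from approximants in $\mathcal{X}_0$ and $\mathcal{Y}_0$, the same use of (H1), (C1) and \cref{lemma} to pass from norm to measure estimates, and the same identification $S=T_g$ on $\mathcal{Y}_0$ (the paper derives it by composing $T_fS\tilde{\varphi}=\tilde{\varphi}$ with $f\circ g=\mathrm{id}$ on the right, you by applying $T_g$ on the left --- the same algebraic fact). Your explicit induction $S^n\psi=\psi\circ g^n$ and the closing remark that no outer measure is needed are both consistent with, and implicit in, the paper's argument.
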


\begin{proof}
    Consider $\mathcal{X}_0, \mathcal{Y}_0$ to be dense sets and $S:\mathcal{Y}_0\to \mathcal{Y}_0$ as in the statement of Kitai's Criterion. Let $A$ be a measurable set with finite measure and $0<\varepsilon<1$. By (H1) and (C1) there is some $\delta>0$ such that
    \[
    \|\psi-2\chi_A\|<\delta \implies d(\psi, 2\chi_A)<\frac{\varepsilon}{2}.
    \]
    
    By hypothesis, there exists $\varphi\in \mathcal{X}_0$ and $\Tilde{\varphi}\in \mathcal{Y}_0$ such that 
    \[
    \|\varphi-2\chi_A\|<\delta \hspace*{0.5cm} \text{and} \hspace*{0.5cm} \|\Tilde{\varphi}-2\chi_A\|<\delta.
    \]
    Now let 
    \[
    	C=\{x\in X:|\varphi(x)-2|< 1\} \hspace*{0.5cm} \text{and} \hspace*{0.5cm} C'=\{x\in X:|\Tilde{\varphi}(x)-2|< 1\}.
    \]
    We will show that $B=C\cap C'\cap A$ satisfies the properties we want.

    First, we have that 
    \[
        A \setminus C\subseteq \{x\in X: |\varphi(x)-2\chi_A(x)|\geq 1\}.
    \]
    By \cref{lemma} we have that $\mu(A\setminus C)<\varepsilon/2$. Analogously, $\mu(A\setminus C')<\varepsilon/2$, so that $\mu(A\setminus B)<\varepsilon$.

    We now have that 
    \[
        f^{-n}(B)\subseteq f^{-n}(C)\subseteq \{x\in X: |\varphi(f^n(x))|\geq 1\}.
    \]
    Let $0<\xi<1$ be given. By (C1) there is some $\eta>0$ such that 
    \[
    \|\psi\|<\eta \implies d(\psi,0)<\xi.
    \]
    Let $N\geq 1$ be such that $\|T^n_f\varphi\|<\eta$ for all $n\geq N$. By \cref{lemma}, we have that 
    \[
    \mu(f^{-n}(B))\leq \mu(\{x\in X: |\varphi(f^n(x))|\geq 1\})<\xi \hspace*{0.5cm} \forall n\geq N,
    \]
    showing that $\mu(f^{-n}(B))\to 0$.  

    Now since $T_f S \Tilde{\varphi}=\Tilde{\varphi}$ for every $\Tilde{\varphi}\in \mathcal{Y}_0$ and $f\circ g=id$, we have
    \[
    S\Tilde{\varphi}=(S\Tilde{\varphi})\circ f \circ g=(T_f S\Tilde{\varphi})\circ g=\Tilde{\varphi}\circ g \hspace*{0.5cm} \forall \Tilde{\varphi}\in \mathcal{Y}_0,
    \]
    so that $S=T_g$ on $\mathcal{Y}_0$. Note that these equalities are well defined in $L^0_\mu(X)$ because $f$, $g$ and then also $f\circ g$ are non-singular. Applying the same arguments as before with $g$ instead of $f$, we can conclude that $\mu(g^{-n}(B))\to 0$.
\end{proof}

\begin{theorem}\label{thm: Kitai}
    Suppose that $\mathcal{X}$ satisfies \emph{(H2)} and \emph{(C2)}. Suppose also that there exists a non-singular measurable function $g$ such that $g\circ f=id$. If, for every $A\in \mathcal{B}$ with finite measure and for every $\varepsilon>0$, there exists a measurable set $B\subseteq A$ such that
    \[
    \mu(A\setminus B)<\varepsilon, \hspace*{0.5cm} \mu(f^{-n}(B))\to 0 \hspace*{0.5cm} \text{and} \hspace*{0.5cm} \mu(g^{-n}(B))\to 0,
    \]
    then $T_f$ satisfies Kitai's Criterion.
\end{theorem}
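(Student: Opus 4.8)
The plan is to verify Kitai's Criterion directly, taking the right inverse to be $S:=T_g$ and taking $\mathcal{X}_0$ and $\mathcal{Y}_0$ to be linear spans of suitable characteristic functions. The algebraic heart of the matter is that, since $g\circ f=\mathrm{id}$, we have $T_fT_g\varphi=(\varphi\circ g)\circ f=\varphi\circ(g\circ f)=\varphi$ for every $\varphi\in L^0_\mu(X)$; thus condition (iii) holds for $S=T_g$ verbatim. Likewise, for any measurable set $B$ one has $T_f^n\chi_B=\chi_{f^{-n}(B)}$ and $S^n\chi_B=T_g^n\chi_B=\chi_{g^{-n}(B)}$, so conditions (i) and (ii) will be controlled by the measures $\mu(f^{-n}(B))$ and $\mu(g^{-n}(B))$ through (C2), in the form of \cref{lem-cont}.

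For $\mathcal{X}_0$ I would take the linear span of the functions $\chi_B$ with $B$ of finite measure and $\mu(f^{-n}(B))\to 0$. For such a $B$, every $T_f^n\chi_B=\chi_{f^{-n}(B)}$ lies in $\mathcal{X}$ because $\chi_B\in\mathcal{X}$ and $T_f$ maps $\mathcal{X}$ into itself; moreover for all large $n$ the measure $\mu(f^{-n}(B))$ is finite and tends to $0$, so $\|T_f^n\chi_B\|\to 0$ by \cref{lem-cont}, giving (i). Density follows from (H2) together with the hypothesis: given a set $A$ of finite measure and $\delta>0$, the hypothesis furnishes $B\subseteq A$ with $\mu(A\setminus B)$ small and $\mu(f^{-n}(B))\to 0$, whence $\|\chi_A-\chi_B\|$ is small by \cref{lem-cont}, so each generator of the dense family of simple functions is approximated.

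For $\mathcal{Y}_0$ I would take the span of the $\chi_B$ for $B$ in the family $\mathcal{D}$ of sets with $\mu(g^{-n}(B))\to 0$ and $\chi_{g^{-n}(B)}\in\mathcal{X}$ for every $n\ge 0$. Condition (ii) then follows exactly as (i) did. The point that makes $S=T_g$ genuinely map $\mathcal{Y}_0$ into itself is the invariance of $\mathcal{D}$ under $B\mapsto g^{-1}(B)$: here $\mu(g^{-n}(g^{-1}(B)))=\mu(g^{-(n+1)}(B))\to 0$ and $\chi_{g^{-n}(g^{-1}(B))}=\chi_{g^{-(n+1)}(B)}\in\mathcal{X}$, and all these bookkeeping identities rest on $f^{-1}(g^{-1}(E))=E$, which is just $g\circ f=\mathrm{id}$.

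I expect the main obstacle to be the density of $\mathcal{Y}_0$, and more precisely the requirement, built into $\mathcal{D}$, that $\chi_{g^{-n}(B)}\in\mathcal{X}$ for \emph{every} $n$ and not merely for large $n$. The hypothesis only asserts $\mu(g^{-n}(B))\to 0$, which by (H1) guarantees $\chi_{g^{-n}(B)}\in\mathcal{X}$ for all large $n$, but says nothing about the first few preimages, where $g^{-n}(B)$ could a priori have infinite measure --- note that, in contrast to $T_f$, the operator $T_g$ is \emph{not} assumed to preserve $\mathcal{X}$. The delicate part of the argument is therefore to show that the sets delivered by the hypothesis may be chosen, or trimmed by a set of arbitrarily small measure, so that all of their $g$-preimages have finite measure, thereby placing them in $\mathcal{D}$ without spoiling the approximation of $\chi_A$. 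Everything else reduces to the same triangle-inequality estimates already used for $\mathcal{X}_0$, and once $\mathcal{Y}_0$ is dense, Kitai's Criterion applies and $T_f$ is mixing.
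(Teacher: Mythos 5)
Your route is the paper's route: take $S=T_g$, build $\mathcal{X}_0$ and $\mathcal{Y}_0$ from characteristic functions of sets with shrinking preimages, obtain density from the hypothesis together with (H2) and \cref{lem-cont}, and deduce (i)--(iii) from $T_f^n\chi_B=\chi_{f^{-n}(B)}$, $S^n\chi_B=\chi_{g^{-n}(B)}$ and $g\circ f=\mathrm{id}$. The only structural difference is that the paper takes $\mathcal{Y}_0=\mathcal{X}_0$, the simple functions over sets $B_j$ satisfying \emph{both} $\mu(f^{-n}(B_j))\to 0$ and $\mu(g^{-n}(B_j))\to 0$ (the hypothesis delivers a single $B\subseteq A$ with both properties, so splitting the two classes buys nothing), and verifies $S:\mathcal{Y}_0\to\mathcal{Y}_0$ through exactly your bookkeeping identities $f^{-n}(g^{-1}(B_j))=f^{-(n-1)}(B_j)$ and $g^{-n}(g^{-1}(B_j))=g^{-(n+1)}(B_j)$. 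However, your proof is not complete: the step you yourself flag --- choosing or trimming $B$ so that $\chi_{g^{-n}(B)}\in\mathcal{X}$ for \emph{every} $n\geq 0$ --- is never carried out, and it is not routine. Since $g\circ f=\mathrm{id}$ one has $g^{-1}(E)\cap f(X)=f(E)$, but on $X\setminus f(X)$ the fibres of $g$ are unconstrained; an atomic system with countably many disjoint $f$-chains whose initial points are all sent by $g$ to one atom $a$ of $B$, with measures decaying along each chain, yields $\mu(g^{-1}(B))=\infty$ while $\mu(g^{-n}(B))\to 0$ and $\mu(f^{-n}(B))\to0$. Worse, if $a$ is an atom no trimming inside $B$ can remove it without losing a fixed amount of mass, so your family $\mathcal{D}$ can fail to produce a dense $\mathcal{Y}_0$; your proposed repair would fail as stated.

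You should know, though, that this is not a step you failed to reconstruct: the paper does not perform it either. It defines $\mathcal{Y}_0$ by the two decay conditions alone, checks $S$-invariance only at the level of those conditions, and uses membership $\chi_{g^{-n}(B_j)}\in\mathcal{X}$ (via (H1) and \cref{lem-cont}) only for large $n$, where $\mu(g^{-n}(B_j))$ is finite and small; for small $n$ the question is passed over in silence. This is harmless where the theorem is applied: in \cref{thm: kitai characterization} the system is invertible, $g=f^{-1}$, and the standing assumption that $T_f^{-1}$ maps $\mathcal{X}$ into $\mathcal{X}$ makes $S=T_g$ a genuine self-map of $\mathcal{X}$, so no trimming is needed. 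Moreover, even in the one-sided setting, what your argument fully establishes --- $S^n\tilde{\varphi}\in\mathcal{X}$ with $\|S^n\tilde{\varphi}\|\to 0$ for all large $n$, together with $T_f^nS^n\tilde{\varphi}=\tilde{\varphi}$ --- is all that the standard proof of Kitai's theorem consumes (one perturbs by $S^ny$ only for large $n$), so the mixing conclusion is safe regardless. But for the literal conclusion that some $S$ maps a dense $\mathcal{Y}_0\subseteq\mathcal{X}$ into itself, either your extra membership requirement must be secured by an added hypothesis (for instance that $T_g$ preserves $\mathcal{X}$, as holds in the invertible case), or one must modify $S$ (e.g.\ to $y\mapsto (y\circ g)\chi_{f(X)}$, which however requires $f(X)$ to be measurable); so the obstacle you identified is real, is shared by the paper's own proof in the one-sided generality, and your write-up as it stands leaves the theorem's stated conclusion unproven at exactly that point.
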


\begin{proof}
    Let $\mathcal{X}_0$ be the set of simple functions $\sum_{j=1}^{M} b_j \chi_{B_j}$, where $b_j$ are scalars and each $B_j$ is a measurable set that satisfies $\mu(f^{-n}(B_j))\to 0$ and $\mu(g^{-n}(B_j))\to 0.$ 
    We claim that $\mathcal{X}_0$ is dense in $\mathcal{X}$. 
    
    Indeed, let $\varepsilon>0$ and $\varphi=\sum_{j=1}^{M} a_j \chi_{A_j}$ be a simple function such that each $A_j$ has finite measure. Using (C2) via \cref{lem-cont}, let $\delta>0$ be such that 
    \[
    \mu(C)<\infty, \hspace*{0.3cm}\mu(A_j \Delta C)<\delta \implies \|\chi_{A_j}-\chi_C\|<\frac{\varepsilon}{1+\sum_{k=1}^{M}|a_k|}, \hspace*{0.5cm} 1\leq j\leq M.
    \]
    By hypothesis, for each $A_j$, there exists a measurable set $B_j\subseteq A_j$ such that 
    \[
    \mu(A_j\setminus B_j)<\delta, \hspace*{0.5cm} \mu(f^{-n}(B_j))\to 0 \hspace*{0.5cm} \text{and} \hspace*{0.5cm} \mu(g^{-n}(B_j))\to 0.
    \] 
    
    Consider the function $\Tilde{\varphi}=\sum_{j=1}^{M} a_j \chi_{B_j}\in \mathcal{X}_0$. We have that
    \[
        \|\varphi-\Tilde{\varphi}\|=\Big\|\sum_{j=1}^{M} a_j(\chi_{A_j}-\chi_{B_j})\Big\|\\
        \leq \sum_{j=1}^{M}|a_j| \|\chi_{A_j}-\chi_{B_j}\|<\varepsilon,
    \]
    which completes the proof of the claim in view of (H2).

    Let $\varphi\in \mathcal{X}_0$ and $\varepsilon>0$. We have that there are scalars $b_j$ and measurable sets $B_j$, $1\leq j\leq M$, such that $\mu(f^{-n}(B_j))\to 0 $ and $\mu(g^{-n}(B_j))\to 0$, and $\varphi = \sum_{j=1}^{M} b_j \chi_{B_j}$. Let $\delta>0$ be such that
    \[
\mu(C)<\delta \implies \|\chi_C\|<\frac{\varepsilon}{1+\sum_{j=1}^{M} |b_j|}.
    \]
    Let $N_0\geq 1$ be such that, for all $1\leq j\leq M$, 
    \[
    \mu(f^{-n}(B_j))<\delta \hspace*{0.5cm} \forall n\geq N_0.
    \]
    
    We have that, for every $n\geq N_0$,
    \begin{align*}
        \|T_f^{n}\varphi\|&=\Big\| \sum_{j=1}^{M} b_j \chi_{B_j} \circ f^n\Big\|=\Big\| \sum_{j=1}^{M} b_j \chi_{f^{-n}(B_j)}\Big\| \\
        &\leq \sum_{j=1}^{M} |b_j| \|\chi_{f^{-n}(B_j)}\|<\varepsilon,
    \end{align*}
    showing that $T^n_f\varphi\to 0$. 

    We take $\mathcal{Y}_0=\mathcal{X}_0$ and $S=T_g$. If $\Tilde{\varphi}=\sum_{j=1}^{M} b_j \chi_{B_j}\in \mathcal{Y}_0$ then
    \[
    S\Tilde{\varphi}=\sum_{j=1}^{M} b_j \chi_{B_j}\circ g = \sum_{j=1}^{M} b_j \chi_{g^{-1}(B_j)}.
    \]
    Since, by hypothesis, $f^{-n}(g^{-1}(B_j))=f^{-(n-1)}(f^{-1}(g^{-1}(B_j))) =  f^{-(n-1)}(B_j)$ and $g^{-n}(g^{-1}(B_j))= g^{-(n+1)}(B_j)$, it follows that $S:\mathcal{Y}_0\to \mathcal{Y}_0$.     
    Using now the hypothesis $\mu(g^{-n}(B))\to 0$, the same arguments as above show that $S^n\Tilde{\varphi}\to 0$ for every $\Tilde{\varphi}\in \mathcal{Y}_0$.

    Finally, in view of the non-singularity of $f$, $g$, and then $g\circ f$, we have
    \[
    T_f S \Tilde{\varphi}= (S\Tilde{\varphi})\circ f = \Tilde{\varphi} \circ g \circ f = \Tilde{\varphi}, \hspace*{0.5cm} \forall \Tilde{\varphi}\in \mathcal{Y}_0,
    \]
    showing that $T_f$ satisfies Kitai's Criterion.
\end{proof}
            
\begin{remark}
(a) The proof of \cref{thm: Kitai} shows that $T_f$ satisfies a strong form of Kitai's Criterion where 
$\mathcal{Y}_0=\mathcal{X}_0$.

 (b) The proofs of the previous two theorems are analogous to the proof of \cite[Theorem~3.1]{darji2021}, where invertible composition operators on $L^p(X)$, $p\geq 2$, satisfying the Frequent Hypercyclicity Criterion are characterized.
\end{remark}

If $(X,\mathcal{B},\mu,f)$ is an invertible measurable system, we say that $f$ satisfies the \textit{Kitai Runaway Condition} if it satisfies the following condition:
\begin{enumerate}
    \item[(KRC)] \textit{For every $A\in \mathcal{B}$ with finite measure and for every $\varepsilon>0$, there exists a measurable set $B\subseteq A$ such that
    \[
    \mu(A\setminus B)<\varepsilon, \hspace*{0.5cm} \mu(f^{-n}(B))\to 0 \hspace*{0.5cm} \text{and} \hspace*{0.5cm} \mu(f^n(B))\to 0.
    \] }
\end{enumerate}

Combining these two theorems, we have the following characterization for invertible composition operators satisfying Kitai's Criterion:

\begin{theorem}\label{thm: kitai characterization}
Let $(X,\mathcal{B},\mu,f)$ be an invertible measurable system. Suppose that $\mathcal{X}$ satisfies \emph{(H1)}, \emph{(H2)}, \emph{(C1)} and \emph{(C2)}. The following are equivalent:
\begin{itemize}
    \item[(a)] $T_f$ satisfies Kitai's Criterion;
    \item[(b)] $f$ satisfies \emph{(KRC)}.
\end{itemize}
\end{theorem}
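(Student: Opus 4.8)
The plan is to obtain this theorem as a direct corollary of the two preceding results, \cref{thm: KitaiNec} and \cref{thm: Kitai}, by specializing their auxiliary map $g$ to $g=f^{-1}$. The point is that in an invertible measurable system $f$ is bijective and bimeasurable and, by assumption, $f^{-1}$ is non-singular. Thus $g:=f^{-1}$ is a single non-singular measurable map that serves simultaneously as a right inverse, $f\circ g=\mathrm{id}$, and as a left inverse, $g\circ f=\mathrm{id}$. Consequently both \cref{thm: KitaiNec} (which requires only a non-singular right inverse, together with (H1) and (C1)) and \cref{thm: Kitai} (which requires only a non-singular left inverse, together with (H2) and (C2)) are applicable; the hypotheses (H1), (H2), (C1), (C2) assumed here supply everything needed.

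The one piece of bookkeeping I would make explicit is that, with this choice of $g$, the runaway condition common to those two theorems is literally (KRC). Since $g=f^{-1}$ is a bijection, its inverse is $(f^{-1})^{-1}=f$, so for every measurable $B$ and every $n\geq 1$ one has
\[
g^{-n}(B)=(g^{-1})^{n}(B)=f^{n}(B).
\]
Hence the requirement $\mu(g^{-n}(B))\to 0$ becomes $\mu(f^{n}(B))\to 0$, and the triple condition
\[
\mu(A\setminus B)<\varepsilon,\qquad \mu(f^{-n}(B))\to 0,\qquad \mu(g^{-n}(B))\to 0
\]
coincides exactly with the triple condition defining (KRC). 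I would also remark that, because $f$ is bimeasurable, each $f^{n}(B)$ is genuinely measurable, so $\mu(f^{n}(B))$ is an honest measure and no outer measure $\mu^{*}$ is needed here (in contrast with (HRC) and (MRC)).

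With these identifications the two implications are immediate. For (a)$\Rightarrow$(b): assuming $T_f$ satisfies Kitai's Criterion, \cref{thm: KitaiNec} applied with $g=f^{-1}$ yields, for each finite-measure $A$ and each $\varepsilon>0$, a set $B\subseteq A$ with $\mu(A\setminus B)<\varepsilon$, $\mu(f^{-n}(B))\to 0$ and $\mu(g^{-n}(B))=\mu(f^{n}(B))\to 0$, which is precisely (KRC). For (b)$\Rightarrow$(a): assuming (KRC), the same rewriting shows that the hypothesis of \cref{thm: Kitai} holds with $g=f^{-1}$, whence $T_f$ satisfies Kitai's Criterion.

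I do not expect any genuine obstacle in this argument, as the theorem is essentially a synthesis of the two lemmas; the only care points are the index identity $g^{-n}=f^{n}$ above and the observation that invertibility removes the need for an outer measure, both of which are routine to verify.
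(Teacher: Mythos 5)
Your proposal is correct and coincides with the paper's own argument: the paper obtains \cref{thm: kitai characterization} precisely by combining \cref{thm: KitaiNec} and \cref{thm: Kitai} with $g=f^{-1}$, the identity $g^{-n}(B)=f^{n}(B)$ turning the common runaway condition into (KRC) exactly as you describe. Your added remarks (non-singularity of $f^{-1}$ by the definition of an invertible measurable system, and measurability of $f^{n}(B)$ removing the need for $\mu^{*}$) are the same routine verifications implicit in the paper.
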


Using this characterization and \cref{th: characterization mixing}, we are able to construct an example of a mixing composition operator that does not satisfy Kitai's Criterion:

\begin{theorem}\label{principal}
    There exists an invertible measurable system $(X,\mathcal{B},\mu,f)$ such that $T_f:L^p(X)\to L^p(X)$, $1\leq p <\infty$, is mixing and does not satisfy Kitai's Criterion.
\end{theorem}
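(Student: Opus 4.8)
The plan is to combine the two characterizations already at our disposal: by \cref{th: characterization mixing}, $T_f$ is mixing exactly when $f^{-1}(\mathcal B)=_{\text{ess}}\mathcal B$ and $f$ satisfies (MRC), while by \cref{thm: kitai characterization}, $T_f$ satisfies Kitai's Criterion exactly when $f$ satisfies (KRC). Since we work with an invertible, bimeasurable $f$, we have $f^{-1}(\mathcal B)=_{\text{ess}}\mathcal B$ for free, so it suffices to exhibit an invertible measurable system on $L^p(X)$ for which (MRC) holds but (KRC) fails. The point to exploit is the quantifier difference between the two runaway conditions: in (MRC) the exceptional set $B_n$ may depend on $n$, whereas (KRC) demands a \emph{single} $B\subseteq A$ that escapes in both time directions for all large $n$ simultaneously. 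I would therefore build a system in which, at each time $n$, the part of $A$ obstructing escape is negligible and can be deleted, but the location of this obstruction moves with $n$, so that no fixed co-negligible $B$ avoids it at all large times.

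For the construction I would use a dissipative, invertible, \emph{non-atomic} system of \emph{unbounded distortion}. Non-atomicity is essential: if all singletons are null, then the demand that a single point ``run away'' is vacuous, removing precisely the rigidity that in the bounded-distortion setting forces mixing and Kitai's Criterion to coincide (\cref{sec: bounded distortion}); this is exactly why the example must, and will, have unbounded distortion. Concretely, I would fix a set $A$ of finite measure together with a sequence of \emph{focusing times} $n_1<n_2<\cdots$ and a family of small pieces $P_i\subseteq A$ coming from successively finer partitions of $A$, so that the $P_i$ sweep $A$ repeatedly while $\mu(P_i)\to 0$. The map $f$ is designed so that the forward iterate $f^{n_i}$ \emph{expands} the current piece $P_i$ to measure bounded below while \emph{dispersing} its complement $A\setminus P_i$ to small measure, and so that all backward iterates merely disperse.

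Granting such a system, the verification splits cleanly. For (MRC) one takes, at time $n$, the set $B_n$ obtained from $A$ by deleting the currently focusing piece; since $\mu(P_i)\to 0$, the deleted mass tends to $0$, while the forward image of the remainder is dispersed and the backward image is dispersed anyway, so $\mu(A\setminus B_n)\to 0$, $\mu(f^{-n}(B_n))\to 0$ and $\mu(f^{n}(B_n))\to 0$. For the failure of (KRC) I would argue by averaging: fix $\varepsilon$ small and any $B\subseteq A$ with $\mu(A\setminus B)<\varepsilon$. Summing $\mu(P_i\cap(A\setminus B))$ over the pieces of a single partition round gives at most $\mu(A\setminus B)<\varepsilon$, so a definite positive proportion of the pieces in each round satisfy $\mu(P_i\cap B)\ge(1-O(\varepsilon))\mu(P_i)$; for those $i$ the expansion yields $\mu(f^{n_i}(B))\gtrsim\mu(f^{n_i}(P_i\cap B))$ bounded below. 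Hence $\limsup_n\mu(f^{n}(B))>0$ for every admissible $B$, so (KRC) cannot hold.

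The hard part is the realization and the global check. One must produce a \emph{single} invertible, non-singular $f$ whose iterates simultaneously carry out the moving focusing forward and the dispersal both forward (off the current piece) and backward; the unbounded distortion must be arranged so that each small $P_i$ blows up exactly when its complement collapses. Equally delicate is verifying (MRC) for \emph{every} finite-measure set, not merely for the witnessing $A$, which is what genuinely yields mixing: the focusing and dispersing behaviour has to be compatible with arbitrary measurable sets and with both time directions. Once the measure space, the cocycle of densities and the focusing schedule $(n_i,P_i)$ are pinned down and shown to satisfy (H1), (H2), (C1) and (C2), the two characterizations deliver the theorem.
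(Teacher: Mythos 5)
Your overall route coincides with the paper's own proof: reduce via \cref{th: characterization mixing} and \cref{thm: kitai characterization} to exhibiting an invertible system satisfying (MRC) but not (KRC), exploit the quantifier gap between the two conditions with a ``moving focused piece'', and defeat (KRC) by averaging over the pieces of successively finer partitions. Indeed, your two verification sketches are precisely the arguments in the paper: there the pieces are the dyadic blocks $[\frac{j-1}{2^n},\frac{j}{2^n}]$ of $W=\{0\}\times[0,1]$, swept over all $j$ and all $n$; (MRC) is obtained by deleting the currently focused block; and the failure of (KRC) comes from the pigeonhole choice of a block carrying at least the average mass of $B$, which the peak expansion factor $2^n$ inflates to $\mu(f^{K_n}(B))\geq \lambda(C)\geq 1-\varepsilon$ for infinitely many times.

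The gap is that the object whose existence the theorem asserts is never constructed: you specify it by desiderata and explicitly defer ``the realization and the global check'', which is exactly where the substance of the proof lies, and two of the deferred points are genuinely delicate. First, an unstaged focusing (``$f^{n_i}$ expands $P_i$ to measure bounded below'' in one step from a piece of measure $\mu(P_i)\to 0$) would force $\sup_B \mu(f(B))/\mu(B)=\infty$, so $T_{f^{-1}}$ would not map $L^p$ into $L^p$ and \cref{thm: kitai characterization} would not apply; the paper avoids this by inserting $4n$ intermediate copies $I_{n,j}^l$ between levels $n$ and $n+1$, doubling the density of the focused block one step at a time up to the peak factor $2^n$ and then halving it back down, which yields $\mu(f^{\pm1}(B))\leq 2\mu(B)$ and also makes the focusing transient so that the obstruction really does move; backward dispersal is then arranged by the geometric weights $2^{-|n|}$. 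Second, (MRC) must be verified for \emph{every} set of finite measure, not just for subsets of the wandering fiber; the paper reduces the general case to $I_0$ by cutting $A$ down to finitely many fibers $A_\alpha$, transporting along the orbit using the uniform control \eqref{eq: measureC}, and gluing via a diagonal argument to get \eqref{eq: measureA'}. You correctly flag both issues, but flagging them is not resolving them: until a concrete $(X,\mathcal{B},\mu,f)$ with a consistent density cocycle is written down and these checks are carried out, the proposal is a correct plan rather than a proof. (A side remark: what your example must avoid is bounded distortion rather than atoms per se --- compare \cref{theorem kitai bounded dist} --- although non-atomicity is indeed what supplies pieces of arbitrarily small positive measure for the sweeping.)
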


\begin{proof}
    Consider $[0,1]$ with the usual topology and with the Borel $\sigma$-algebra, and let $\lambda$ be the Lebesgue measure on $[0,1]$. Let $I_n=\{n\}\times [0,1]$ for every $n\in \mathbb{Z}$. We consider $I_n$ with the product $\sigma$-algebra, and with the measure $\mu_n$ defined by the following: if $C\subseteq [0,1]$ is measurable, we define 
    \[
    \mu_n(\{n\}\times C)=\frac{1}{2^{|n|}}\lambda(C), \hspace*{0.5cm} n\in \mathbb{Z}.
    \] 
    For every $n\in \mathbb{N}$ let $\alpha_{n,j}^l$, $1\leq l \leq 4n$, $1\leq j\leq 2^n$, be real numbers satisfying 
    \begin{equation}\label{eq: alpha}
         n<\alpha_{n,1}^1<\alpha_{n,1}^2<\ldots<\alpha_{n,1}^{4n}<\alpha_{n,2}^1<\alpha_{n,2}^2<\ldots<\alpha_{n,2^n}^{4n}<n+1, 
    \end{equation}
    and let $I_{n,j}^l=\{\alpha_{n,j}^l\}\times [0,1].$ We again consider $I_{n,j}^l$ with the product $\sigma$-algebra, but now we define the measure $\mu_{n,j}^l$ on $I_{n,j}^l$ in the following way: if $C\subseteq [0,1]$ is measurable, then 
    \[
    \mu_{n,j}^l(\{\alpha_{n,j}^l\}\times C)=\begin{dcases}
        \frac{2^l}{2^n}\lambda \Big( C\cap \Big[\frac{j-1}{2^n},\frac{j}{2^n}\Big]\Big)+\frac{1}{2^n}\lambda \Big( C\setminus \Big[\frac{j-1}{2^n},\frac{j}{2^n}\Big]\Big), & 1\leq l\leq 2n,\\
        \frac{2^{4n-l}}{2^n}\lambda \Big( C\cap \Big[\frac{j-1}{2^n},\frac{j}{2^n}\Big]\Big)+\frac{1}{2^n}\lambda \Big( C\setminus \Big[\frac{j-1}{2^n},\frac{j}{2^n}\Big]\Big), & 2n+1\leq l \leq 4n,
    \end{dcases}
    \]
    where $1\leq j\leq 2^n$.

    Let \[
    X=\bigcup_{n\in \mathbb{Z}}I_n \cup \bigcup_{\substack{n\in \mathbb{N}\\ 1\leq l\leq 4n \\ 1\leq j \leq 2^n}} I_{n,j}^l.
    \]
    Consider on $X$ the $\sigma$-algebra 
    \[
    \mathcal{B}=\Big\{\Big(\bigcup_{n\in \mathbb{Z}}\{n\}\times C_n\Big) \cup \Big(\bigcup_{\substack{n\in \mathbb{N}\\ 1\leq l\leq 4n \\ 1\leq j \leq 2^n}} \{\alpha_{n,j}^l\} \times C_{n,j}^l\Big):C_n, C_{n,j}^l\subseteq [0,1] \text{ measurable}\Big\}.  
    \]
    The measure $\mu$ on $X$ is defined by
    \[
    \mu(B)=\sum_{n\in \mathbb{Z}} \mu_n(B\cap I_n)+ \sum_{\substack{n\in \mathbb{N}\\ 1\leq l\leq 4n \\ 1\leq j \leq 2^n}} \mu_{n,j}^l(B\cap I_{n,j}^l).
    \]

    Define the function $f:X\to X$ by $f(n,x)=(n+1,x)$ if $n\leq 0$, $f(n,x)=(\alpha_{n,1}^1,x)$ if $n\geq 1$ and 
    \[
    f(\alpha_{n,j}^l,x)=\begin{dcases}
        (\alpha_{n,j}^{l+1},x), &\text{ if } 1\leq l\leq 4n-1 \text{ and } 1\leq j\leq 2^n,\\
        (\alpha_{n,j+1}^{1},x),  &\text{ if } l=4n \text{ and } 1\leq j\leq 2^n-1,\\
        (n+1,x), & \text{ if } l=4n \text{ and } j=2^n,
    \end{dcases}
    \]
    compare with \eqref{eq: alpha}.
    
    We have that each $I_n$, $n\in \mathbb{Z},$ has measure $1/2^{|n|}$. For the sets $I_{n,j}^l$, the subset given by $\{\alpha_{n,j}^l\}\times [(j-1)/2^n,j/2^n]$ has measure $2^l/2^{2n}$ if $1\leq l \leq 2n$, and it has measure $2^{4n-l}/2^{2n}$ if $2n+1 \leq l \leq 4n$. The complement of this subset in $I_{n,j}^l$ has measure $(2^n-1)/2^{2n}$. We also have that:
    \begin{itemize}
    \item $f(I_n)=I_{n+1}$ for $n\leq 0$;
    \item $f(I_n)=I_{n,1}^1$ for $n\geq 1$;
    \item $f(I_{n,j}^l)=I_{n,j}^{l+1}$ if $1\leq l \leq 4n-1 \text{ and } 1\leq j\leq 2^n$;
    \item $f(I_{n,j}^l)=I_{n,j+1}^1$ if $l=4n$ and $1\leq j \leq 2^n-1$;
    \item $f(I_{n,j}^l)=I_{n+1}$ if $l=4n$ and $j=2^n$.
\end{itemize}
 \Cref{figura} illustrates how $f$ goes from $I_0$ to $I_2$ and which measures are considered on which subintervals.
    
    We have that $(X,\mathcal{B},\mu,f)$ is an invertible measurable system, and since $\mu(f^{-1}(B))\leq 2\mu(B)$ and $\mu(f(B))\leq 2 \mu(B)$ for every $B\in \mathcal{B},$ both $T_f$ and $(T_f)^{-1}=T_{f^{-1}}$ are continuous on $\mathcal{X}$, see \cite{bayart2018topological} or \cite{singh1993composition}. Let us show that $f$ satisfies (MRC), but not (KRC).

    First, let $A=I_0=\{0\}\times [0,1]$. Let $N\geq 0$, and let $n_N\geq 0$ be such that $f^{n_N}(I_0)=I_N.$ For every $n_N+1 \leq n \leq n_N+2^N\cdot 4N,$ if $n_N+1+(j-1)\cdot 4N \leq n \leq n_N+j \cdot 4N$, $1\leq j\leq 2^N$, let 
    \[
    B_n=\{0\}\times \Big( [0,1]\setminus \Big[\frac{j-1}{2^N},\frac{j}{2^N}\Big]\Big)\subseteq I_0,
    \]
    and let \[
    B_{n_N}=\{0\}\times \Big([0,1]\setminus \Big[0,\frac{1}{2^N}\Big]\Big)\subseteq I_0.
    \] 
    By construction of $f$ and $\mu$ we have that 
    \[
    \mu(A\setminus B_n)=\mu(I_0\setminus B_n) =\frac{1}{2^N}\hspace*{0.5cm}\text{ and }\hspace*{0.5cm}\mu(f^n(B_n))=\frac{1}{2^N}\Big(1-\frac{1}{2^N}  \Big)\leq \frac{1}{2^N}
    \]
    for every $n_N\leq n \leq n_N+2^N\cdot 4N$.
    
   Since $n_0=0$ and $n_N+2^N\cdot 4N+1 = n_{N+1}$, $N\geq 0$, we have defined $B_n\subseteq A$ for all $n\geq 0$, with
   \[
   \mu(A\setminus B_n)\to 0 \hspace*{0.5cm}\text{ and }\hspace*{0.5cm} \mu(f^n( B_n))\to 0.
   \]

By the definition of the measures $\mu_n$ on $I_n$ for $n< 0$, it is clear that we also have that 
\[
    \mu(f^{-n}( B_n))\to 0.
   \]

   Next, if $A\subseteq I_0=\{0\}\times [0,1]$ is an arbitrary measurable set, then it suffices to consider the sets 
\[
A\cap B_n, 
\]
where the $B_n$ have been constructed above. Thus (MRC) holds for all measurable sets $A\subseteq I_0$. 
      
    For the general case, let $A\subseteq X$ be of finite measure. In order to prove (MRC) also in this case it suffices to show that, for every $\varepsilon>0$, there is a measurable set $A'\subseteq A$ with $\mu(A\setminus A')<\varepsilon$ and  measurable sets $B_n\subseteq A'$, $n\geq 1$, such that
\begin{equation}\label{eq: measureA'}
		\mu(A'\setminus B_n)\to 0,\hspace*{0.5cm} \mu(f^{-n}(B_n))\to 0\hspace*{0.5cm} \text{and}\hspace*{0.5cm} \mu(f^{n}(B_n))\to 0.
\end{equation}	
Because then, taking $\varepsilon =\frac{1}{k}$, we can find an increasing sequence $(n_k)_{k\geq 1}$ of positive integers and measurable sets $B_n\subseteq A$, $n\geq 1$, such that, whenever $n_k\leq n <n_{k+1}$, $k\geq 1$,	
\[
		\mu(A\setminus B_n)<\frac{1}{k},\hspace*{0.5cm} \mu(f^{-n}(B_n))<\frac{1}{k}\hspace*{0.5cm} \text{and}\hspace*{0.5cm} \mu(f^{n}(B_n))<\frac{1}{k}.
\]
Hence (MRC) also holds for $A$.
		
Thus let $A\subseteq X$ be of finite measure and $\varepsilon>0$.	Then there exists a finite set
\[
J\subseteq \mathbb{Z}\cup\{\alpha_{n,j}^l : n\in\mathbb{N}, 1\leq l\leq 4n, 1\leq j\leq 2^n\}
\]
such that $A':= A\cap \bigcup_{\alpha\in J} (\{\alpha\}\times [0,1])$ satisfies
\[
\mu(A\setminus A')<\varepsilon.
\]
In the sequel write, for $\alpha\in J$,
\[
A_\alpha= A\cap (\{\alpha\}\times [0,1]),
\]
so that $A'=\bigcup_{\alpha\in J} A_\alpha$.

Now, for every $\alpha\in J$ there is some $m_\alpha\in\mathbb{Z}$ such that
\[
f^{-m_\alpha}(\{\alpha\}\times [0,1]) = \{0\}\times [0,1].
\]
Moreover, by construction of the measure, we have that for any $\alpha\in J$ and any sequence $(C_n)_n$ of measurable subsets of $\{0\}\times [0,1]$,
\begin{equation}\label{eq: measureC}
\mu(C_n)\to 0 \implies \mu(f^{m_\alpha}(C_n))\to 0.
\end{equation}

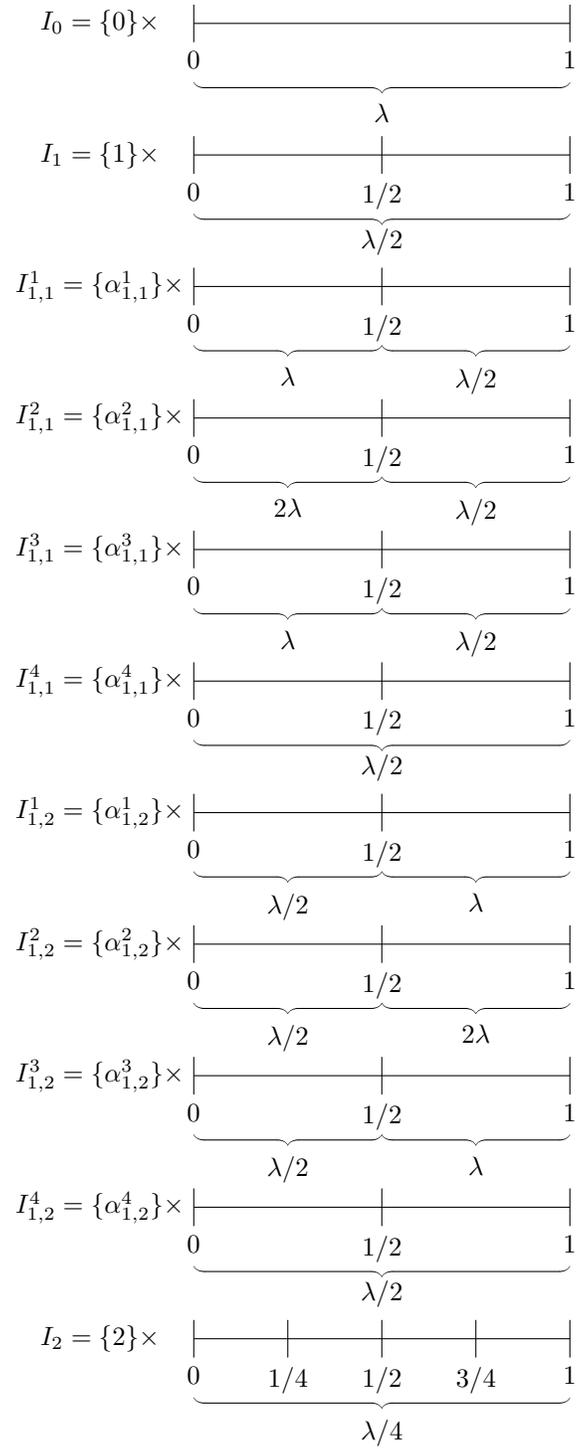
\begin{figure}
    \centering
	\begin{tikzpicture}[scale=5]
    \draw (0,0.35)-- (1,0.35);
    \foreach \x in {0,1} {
        \draw (\x,0.4) -- (\x,0.3) node[below] {\x};
    }
    \draw[B] (0,0.2) -- node[below=2mm] {$\lambda$} (1,0.2);
    \draw (0,0)-- (1,0); %Axis
    \foreach \x in {0,1,1/2} {
        \draw (\x,0.05) -- (\x,-0.05) node[below] {\x};
    }
    \draw[B] (0,-0.15) --  node[below=1mm] {$\lambda/2$} (1,-0.15);
    
    \draw (0,-0.35)-- (1,-0.35); %Axis
    \foreach \x in {0,1,1/2} {
        \draw (\x,-0.30) -- (\x,-0.40) node[below] {\x};
    }
    \draw[B] (0,-0.5) -- node[below=2mm] {$\lambda$} (1/2,-0.5);
    \draw[B] (1/2,-0.5) -- node[below=2mm] {$\lambda/2$} (1,-0.5);
    
    \draw (0,-0.7)-- (1,-0.7); %Axis
    \foreach \x in {0,1,1/2} {
        \draw (\x,-0.65) -- (\x,-0.75) node[below] {\x};
    }
    \draw[B] (0,-0.85) -- node[below=2mm] {$2\lambda$} (1/2,-0.85);
    \draw[B] (1/2,-0.85) -- node[below=2mm] {$\lambda/2$} (1,-0.85);
    
    \draw (0,-1.05)-- (1,-1.05); %Axis
    \foreach \x in {0,1,1/2} {
        \draw (\x,-1) -- (\x,-1.1) node[below] {\x};
    }
    \draw[B] (0,-1.2) -- node[below=2mm] {$\lambda$} (1/2,-1.2);
    \draw[B] (1/2,-1.2) -- node[below=2mm] {$\lambda/2$} (1,-1.2);
    
    \draw (0,-1.4)-- (1,-1.4); %Axis
    \foreach \x in {0,1,1/2} {
        \draw (\x,-1.35) -- (\x,-1.45) node[below] {\x};
    }
    \draw[B] (0,-1.55) --  node[below=1mm] {$\lambda/2$} (1,-1.55);
    
    \draw (0,-1.75)-- (1,-1.75); %Axis
    \foreach \x in {0,1,1/2} {
        \draw (\x,-1.7) -- (\x,-1.8) node[below] {\x};
    }
    \draw[B] (0,-1.9) -- node[below=2mm] {$\lambda/2$} (1/2,-1.9);
    \draw[B] (1/2,-1.9) -- node[below=2mm] {$\lambda$} (1,-1.9);

    \draw (0,-2.1)-- (1,-2.1); %Axis
    \foreach \x in {0,1,1/2} {
        \draw (\x,-2.05) -- (\x,-2.15) node[below] {\x};
    }
    \draw[B] (0,-2.25) -- node[below=2mm] {$\lambda/2$} (1/2,-2.25);
    \draw[B] (1/2,-2.25) -- node[below=2mm] {$2\lambda$} (1,-2.25);
    
    \draw (0,-2.45)-- (1,-2.45); %Axis
    \foreach \x in {0,1,1/2} {
        \draw (\x,-2.4) -- (\x,-2.5) node[below] {\x};
    }
    \draw[B] (0,-2.6) -- node[below=2mm] {$\lambda/2$} (1/2,-2.6);
    \draw[B] (1/2,-2.6) -- node[below=2mm] {$\lambda$} (1,-2.6);
	    
    \draw (0,-2.8)-- (1,-2.8); %Axis
    \foreach \x in {0,1,1/2} {
        \draw (\x,-2.75) -- (\x,-2.85) node[below] {\x};
    }
    \draw[B] (0,-2.95) --  node[below=1mm] {$\lambda/2$} (1,-2.95);
    
    \draw (0,-3.15)-- (1,-3.15); %Axis
    \foreach \x in {0,1,1/2,1/4,3/4} {
        \draw (\x,-3.1) -- (\x,-3.2) node[below] {\x};
    }
    \draw[B] (0,-3.3) --  node[below=2mm] {$\lambda/4$} (1,-3.3);
    
    \draw (-0.25,0.35) node {$I_0=\{0\}\times $};
    \draw (-0.25,0) node {$I_1=\{1\}\times $};
    \draw (-0.25,-0.35) node {$I_{1,1}^1=\{\alpha_{1,1}^1\}\times $};
    \draw (-0.25,-0.7) node {$I_{1,1}^2=\{\alpha_{1,1}^2\}\times $};
    \draw (-0.25,-1.05) node {$I_{1,1}^3=\{\alpha_{1,1}^3\}\times $};
    \draw (-0.25,-1.4) node {$I_{1,1}^4=\{\alpha_{1,1}^4\}\times $};
    \draw (-0.25,-1.75) node {$I_{1,2}^1=\{\alpha_{1,2}^1\}\times $};
    \draw (-0.25,-2.1) node {$I_{1,2}^2=\{\alpha_{1,2}^2\}\times $};
    \draw (-0.25,-2.45) node {$I_{1,2}^3=\{\alpha_{1,2}^3\}\times $};
    \draw (-0.25,-2.8) node {$I_{1,2}^4=\{\alpha_{1,2}^4\}\times $};
    \draw (-0.25,-3.15) node {$I_2=\{2\}\times $};
    \end{tikzpicture}    
    \caption{Construction of $f$ from $I_0$ to $I_2$}
    \label{figura}
    \end{figure}

Since (MRC) holds for measurable subsets of $I_0$, we find for any $\alpha\in J$ measurable sets 
\[
B_n^\alpha \subseteq f^{-m_\alpha}(A_\alpha)
\]
such that
\[
		\mu(f^{-m_\alpha}(A_\alpha)\setminus B_n^\alpha)\to 0\hspace*{0.5cm}\text{and}\hspace*{0.5cm} \mu(f^{n}(B_n^\alpha))\to 0,
\]
hence also, for $n\geq \max\{0,-m_\alpha\}$,
\[
		\mu(f^{-m_\alpha}(A_\alpha)\setminus B_{m_\alpha+n}^\alpha)\to 0\hspace*{0.5cm}\text{and}\hspace*{0.5cm} \mu(f^{m_\alpha+n}(B_{m_\alpha+n}^\alpha))\to 0,
\]

It then follows from \eqref{eq: measureC}  that, for any $\alpha\in J$,
\[
\mu(A_\alpha\setminus f^{m_\alpha}(B_{m_\alpha+n}^\alpha))\to 0.
\]

Finally, for $n\geq \max\{0,-m_\alpha:\alpha\in J\}$, let 
\[
B_n:=\bigcup_{\alpha \in J} f^{m_\alpha}(B_{m_\alpha+n}^\alpha)\subseteq A'.
\]
Then we have that
\[
\mu(A'\setminus B_n)=\sum_{\alpha\in J} \mu(A_\alpha \setminus  f^{m_\alpha}(B_{m_\alpha+n}^\alpha))\to 0
\]
and 
\[
 \mu(f^{n}(B_n))=\sum_{\alpha\in J} \mu(f^{m_\alpha+n}(B_{m_\alpha+n}^\alpha))\to 0.
\]
By the definition of the measures $\mu_n$ on $I_n$ for $n< 0$, it is clear that we also have that 
\[
    \mu(f^{-n}( B_n))\to 0.
\]
This shows \eqref{eq: measureA'}.

Altogether, $f$ satisfies (MRC).
    
    Let us now show that $f$ does not satisfy (KRC). Let $A=I_0$ and $\varepsilon=1/4$. Let $B=\{0\}\times C \subseteq I_0$, with $C\subseteq [0,1]$ measurable, be such that $\mu(A\setminus B)<\varepsilon$, that is, $\lambda ([0,1]\setminus C)<\varepsilon.$ We must have that 
    \[
    \lambda\Big(C\cap \Big[0,\frac{1}{2}\Big]\Big)\geq \frac{\lambda(C)}{2}\hspace*{0.5cm} \text{ or } \hspace*{0.5cm} \lambda\Big(C\cap \Big[\frac{1}{2},1\Big]\Big)\geq \frac{\lambda(C)}{2}.
    \]
    More generally, we must have that, for every $n\geq 1$, there exists some $1\leq j_n\leq 2^n$ such that
    \[
    \lambda\Big(C\cap \Big[\frac{j_n-1}{2^n},\frac{j_n}{2^n}\Big]\Big)\geq \frac{\lambda(C)}{2^n}.
    \]
    For $n\geq 1,$ let $K_n$ be such that $f^{K_n}(I_0)=I_{n,j_n}^{2n}.$ Then
    \[
    f^{K_n}(B)=\{\alpha_{n,j_n}^{2n}\}\times C\supseteq \{\alpha_{n,j_n}^{2n}\}\times \Big(C\cap \Big[\frac{j_n-1}{2^n},\frac{j_n}{2^n}\Big]\Big)
    \]
    and hence
    \[
    \mu(f^{K_n}(B)) \geq 2^n \cdot \lambda\Big(C\cap \Big[\frac{j_n-1}{2^n},\frac{j_n}{2^n}\Big]\Big)\geq \lambda(C).
    \]
    Thus we have that \[\mu(f^k(B))\geq \lambda(C)\geq 1-\varepsilon = \frac{3}{4}>\varepsilon\]
    for infinitely many $k$, implying that $f$ cannot satisfy (KRC).
\end{proof}

\begin{remark}\label{rem: Grivaux}
(a) Recall that an operator is called \textit{chaotic} if it is hypercyclic and has a dense set of periodic points. By \cite[Theorem~3.2]{darji2021}, the previous composition operator is not chaotic. Note that our system is dissipative; see the following section.

(b) We claim that our example has the property that the only function $\varphi\in \mathcal{X}$ satisfying $T_f^{-n}\varphi\to 0$ is the zero function ($\mu$-almost everywhere). Indeed, suppose that $\varphi$ does not vanish on a set of strictly positive measure. By considering separately the real and imaginary part of $\varphi$, it is enough to consider real functions, and by considering separately the negative and positive part, it is enough to look at positive real functions $\varphi$.
Doing so, there exist $\alpha$, a measurable set $B\subseteq \{\alpha\}\times [0,1]$ of stricly positive measure and $\delta>0$ such that $\varphi\geq \delta \chi_B$. This implies that $\varphi \circ f^{-n} \geq \delta \chi_B \circ f^{-n}$, so that $T_f^{-n} \varphi \geq \delta \chi_{f^{n}(B)}$. By taking the norm, we have $\|T^{-n}_f \varphi\|^p \geq \delta^p \mu(f^{n}(B))$. Using the same argument as the final part of the proof of \cref{principal}, one can show that no $B\subseteq \{\alpha\}\times [0,1]$ of strictly positive measure satisfies $\mu (f^{n}(B))\to 0$, proving our claim.

(c) By regarding $f^{-1}$, we also have an example of an invertible measurable system for which $T_f$ is mixing but where only the zero function $\varphi$ satisfies that $T_f^{n}\varphi\to 0$.
Grivaux constructed in \cite{grivaux2005hypercyclic} an example of a mixing operator $T$ on $\ell^p(\mathbb{N})$, $1\leq p<\infty$, that does not satisfy Kitai's Criterion by showing that $x=0$ is the only vector satisfying $T^nx\to 0$. For this she uses a deep operator theoretic result.

(d) Menet and Papathanasiou \cite{menetpapathanasiou24} have recently studied generalized shifts induced by sequences of operators. In this context they have also found mixing operators that do not satisfy Kitai's Criterion, see \cite[Section 4.1]{menetpapathanasiou24}. They point out that there are four (weaker and stronger) versions of Kitai's Criterion; they show that two of these criteria are equivalent, while the others are not. The example considered in (c) above, just like Grivaux's example, does not satisfy any version of Kitai's Criterion.
\end{remark}

The investigations in this section lead one to wonder whether, in the characterization of hypercyclic composition operators, \cref{th: characterization hc}, and for invertible measurable systems $(X,\mathcal{B},\mu,f)$, one might replace the sequence of measurable sets $B_k$ in condition (HRC) by a single set $B$. More precisely, one might ask whether (HRC) can be replaced by the following \textit{Gethner-Shapiro Runaway Condition}:

\begin{enumerate}
    \item[(GRC)] \textit{For every $A \in \mathcal{B}$ with finite measure and every $\varepsilon >0$ there exist a measurable set $B\subseteq A$ and an increasing sequence $(n_k)_{k\geq 1}$ of positive integers such that
    \[
		\mu(A\setminus B)<\varepsilon,\hspace*{0.5cm} \mu(f^{-n_k}(B))\to 0 \hspace*{0.5cm} \text{and} \hspace*{0.5cm} \mu(f^{n_k}(B))\to 0.
		\]
  }
\end{enumerate}

The name of this condition refers to the Gethner-Shapiro Criterion, which is exactly Kitai's Criterion where the full sequence $(n)_n$ is replaced by an increasing sequence $(n_k)_k$ of positive integers, see \cite{gethner1987} and \cite[Theorem 3.10]{grosse2011linear}. This criterion implies that the operator is weakly mixing.

Now, Kalmes \cite[p. 1603 and Example 3.19]{Kalmes2007} has shown that one cannot always choose all the sets $B_k$ in (HRC) to be $A$. But, maybe surprisingly, the answer to the question above is positive. Note that if $f$ is bijective and bimeasurable then $f^{-1}(\mathcal{B})=_{\text{ess}}\mathcal{B}$.

\begin{theorem}\label{thm: Gethner-Shapiro characterization}
Let $(X,\mathcal{B},\mu,f)$ be an invertible measurable system. Suppose that $\mathcal{X}$ satisfies \emph{(H1)}, \emph{(H2)}, \emph{(C1)} and \emph{(C2)}. The following are equivalent:
\begin{itemize}
    \item[(a)] $T_f$ is hypercyclic;
    \item[(b)] $T_f$ is weakly mixing;
    \item[(c)] $f$ satisfies \emph{(HRC)};
    \item[(d)] $f$ satisfies \emph{(GRC)}.
\end{itemize}
\end{theorem}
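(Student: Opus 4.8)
The plan is to reduce everything to \cref{th: characterization hc} and then prove the single new equivalence (c) $\Leftrightarrow$ (d). Since the system is invertible, $f$ is bijective and bimeasurable, so for every $A\in\mathcal{B}$ we have $A=f^{-1}(f(A))$ with $f(A)\in\mathcal{B}$, whence $f^{-1}(\mathcal{B})=_{\text{ess}}\mathcal{B}$ holds automatically. Thus \cref{th: characterization hc} already delivers (a) $\Leftrightarrow$ (b) $\Leftrightarrow$ (c), and only (c) $\Leftrightarrow$ (d) remains. Throughout I would use that, because $f$ is bijective and bimeasurable, the forward image $f^{n}(B)$ of any measurable set $B$ is measurable, so that $\mu^{*}(f^{n}(B))=\mu(f^{n}(B))$. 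This is precisely what reconciles the outer measure appearing in (HRC) with the genuine measure in (GRC), and it is the one place where invertibility is essential.

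For (c) $\Rightarrow$ (d), which is the conceptual heart of the theorem, I would fix $A$ of finite measure and $\varepsilon>0$ and apply (HRC) to obtain subsets $B_k\subseteq A$ and an increasing sequence $(n_k)_k$ with $\mu(A\setminus B_k)\to 0$, $\mu(f^{-n_k}(B_k))\to 0$ and $\mu(f^{n_k}(B_k))\to 0$. Since all three quantities tend to $0$, I can extract a strictly increasing subsequence $(k_j)_j$ with $\mu(A\setminus B_{k_j})<\varepsilon 2^{-j}$ and with $\mu(f^{-n_{k_j}}(B_{k_j}))$ and $\mu(f^{n_{k_j}}(B_{k_j}))$ each below $1/j$. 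The decisive move is then to take a \emph{single} set
\[
B:=\bigcap_{j\geq 1}B_{k_j}\subseteq A.
\]
Summability of the complements gives $\mu(A\setminus B)\leq\sum_j\mu(A\setminus B_{k_j})<\varepsilon$, while $B\subseteq B_{k_j}$ yields $f^{\pm n_{k_j}}(B)\subseteq f^{\pm n_{k_j}}(B_{k_j})$, so that $\mu(f^{-n_{k_j}}(B))\to 0$ and $\mu(f^{n_{k_j}}(B))\to 0$ along the increasing sequence $(n_{k_j})_j$. This is exactly (GRC). I expect this to be the step that looks hardest a priori — and is flagged above as surprising — because one fears that the sets witnessing (HRC) at different times are genuinely incompatible (Kalmes' obstruction shows they cannot all be taken equal to $A$). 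The resolution is that the runaway property is inherited by subsets: intersecting a rapidly exhausting subsequence of the $B_k$ costs only an arbitrarily small slice of $A$ yet retains every runaway estimate at once.

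The converse (d) $\Rightarrow$ (c) I would dispatch by a routine diagonalisation. Applying (GRC) with $\varepsilon=1/m$ produces sets $B^{(m)}\subseteq A$ with $\mu(A\setminus B^{(m)})<1/m$ and increasing sequences $(n^{(m)}_k)_k$ (hence $n^{(m)}_k\to\infty$) along which the forward and backward images of $B^{(m)}$ vanish. For each $m$ I select $k(m)$ large enough that $\mu(f^{-n^{(m)}_{k(m)}}(B^{(m)}))<1/m$ and $\mu(f^{n^{(m)}_{k(m)}}(B^{(m)}))<1/m$, and simultaneously large enough that $N_m:=n^{(m)}_{k(m)}$ exceeds $N_{m-1}$. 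Putting $B_m:=B^{(m)}$ then gives $\mu(A\setminus B_m)\to 0$, $\mu(f^{-N_m}(B_m))\to 0$ and $\mu^{*}(f^{N_m}(B_m))=\mu(f^{N_m}(B_m))\to 0$ along the increasing sequence $(N_m)_m$, which is (HRC). I anticipate no genuine obstacle here beyond bookkeeping, the only substantive ingredient again being that invertibility identifies $\mu^{*}$ with $\mu$ on forward images.
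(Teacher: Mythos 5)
Your proposal is correct, but for the one non-trivial implication it takes a genuinely different, and more elementary, route than the paper. The paper never proves (c) $\Rightarrow$ (d) directly: it reduces to showing (b) $\Rightarrow$ (d) by invoking Peris's theorem that every weakly mixing operator satisfies the Gethner--Shapiro Criterion (an operator-theoretic result resting on a Baire-category/Mittag--Leffler argument), and then translates that criterion into (GRC) by repeating the extraction in the proof of \cref{thm: KitaiNec}. You instead stay entirely inside measure theory: from (HRC) you pick a subsequence along which the defects $\mu(A\setminus B_{k_j})$ are summable while $\mu(f^{\pm n_{k_j}}(B_{k_j}))<1/j$, and pass to the single set $B=\bigcap_j B_{k_j}$; the inclusions $B\subseteq B_{k_j}$ give $f^{\pm n_{k_j}}(B)\subseteq f^{\pm n_{k_j}}(B_{k_j})$, countable subadditivity controls $\mu(A\setminus B)$, and bimeasurability identifies $\mu^*$ with $\mu$ on forward images exactly as you note. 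This is sound, and your (d) $\Rightarrow$ (c) diagonalisation is routine (it is even quicker via the $\varepsilon$-formulation of (HRC) in \cref{rem: BDP}(b), which (GRC) yields immediately); likewise your observation that $f^{-1}(\mathcal{B})=_{\text{ess}}\mathcal{B}$ is automatic for invertible systems matches the remark the paper makes just before the theorem. What your route buys is self-containedness and the insight that (HRC) $\Leftrightarrow$ (GRC) is a purely measure-theoretic fact about invertible systems, requiring none of (H1), (H2), (C1), (C2) --- those enter only through \cref{th: characterization hc} to connect both conditions with hypercyclicity. What the paper's route buys is a formally stronger conclusion: since the Gethner--Shapiro Criterion comes with one fixed sequence $(n_k)$ for all of $\mathcal{X}_0$ and $\mathcal{Y}_0$, the sequence in (GRC) can there be chosen independently of $A$ and $\varepsilon$, whereas your $(n_{k_j})_j$ depends on both; the paper's argument also makes the conceptual link between (GRC) and the Gethner--Shapiro Criterion explicit. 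One cosmetic point: with defects bounded by $\varepsilon 2^{-j}$ the subadditivity estimate gives $\mu(A\setminus B)<\varepsilon$ only after a one-line remark (strictness in the first term), or simply use $\varepsilon 2^{-j-1}$.
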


\begin{proof}
By \cref{th: characterization hc} it suffices to show that (b) implies (GRC). Now, by Peris \cite{peris2001}, see also \cite[Theorem 3.22]{grosse2011linear}, every weakly mixing operator satisfies the Gethner-Shapiro Criterion. Then (GRC) can be deduced exactly as in the proof of \cref{thm: KitaiNec}.
\end{proof}

\section{Dissipative Systems}\label{sec: dissipative systems}

The bilateral (respectively unilateral) backward shift $B$ on a sequence space $\mathcal{X}\subseteq \mathbb{K}^\mathbb{Z}$ (respectively $\mathcal{X}\subseteq \mathbb{K}^\mathbb{N}$, $\mathbb{K}=\mathbb{R}$ or $\mathbb{C}$) is a natural and important example of a composition operator. It is defined by
\[
B(x_n)_n = (x_{n+1})_n,
\]
so that $B=T_f$ with $f(n)=n+1$. In this context, the canonical unit sequences will be denoted by $e_n$. For more details regarding properties of $B$, see \cite[Section~4.1]{grosse2011linear}.

However, the next example shows that not every backward shift can be treated in the framework of \cref{sec: composition operators}.

\begin{example}\label{ex:seqsp}
Let $\mathcal{X}$ be the sequence space over $\mathbb{N}$ given by
\[
\mathcal{X}=\Big\{ (x_n)_{n\geq 1} : \sum_{n=1}^\infty \frac{x_n}{n} \ \text{ converges}\Big\},
\]
with norm $\|(x_n)_n\|=\sup_{N\geq 1} \big| \sum_{n=1}^N \frac{x_n}{n}\big|$. Then $(\mathcal{X},\|\cdot\|)$ is a Banach space for which the unit sequences $e_n$, $n\geq 1$, form a basis. It is not difficult to see, using Abel partial summation, that the backward shift $B$ is an operator on $\mathcal{X}$. But there is no measure $\mu$ on $\mathcal{P}(\mathbb{N})$ that satisfies (H1) and (C1). Indeed, since $((-1)^n\sqrt{n})_n\in \mathcal{X}$, (C1) would imply, via \cref{lemma}, that there is some $m\geq 1$ so that
\[
\mu\Big(\Big\{n\in \mathbb{N}: \frac{1}{m} \sqrt{n}\geq 1\Big\}\Big)<1.
\]
But, then, (H1) would imply that $\chi_{[m^2,\infty)}\in\mathcal{X}$, a contradiction. 

Note also that $B$ is hypercyclic (see \cite[Theorem 4.3]{grosse2011linear}).
\end{example}

Thus there are natural backward shifts whose hypercyclicity cannot be proved by the results in \cref{sec: composition operators}. On the other hand, backward shifts are dissipative systems. This motivates us to consider a framework on dissipative systems that is wider than in \cref{sec: composition operators}.

Recall that a measurable system $(X,\mathcal{B},\mu,f)$ or the composition operator $T_f:\mathcal{X}\to\mathcal{X}$ is called \textit{dissipative} if there exists $W\in \mathcal{B}$ such that
\begin{enumerate}
    \item[(i)] $f^m(W)\in \mathcal{B}$, for all $m\in \mathbb{Z}$;
    \item[(ii)] the $f^m(W)$, $m\in \mathbb{Z}$, are pairwise disjoint;
    \item[(iii)] $X=\bigcup_{m\in \mathbb{Z}} f^m(W)$.
\end{enumerate}
Such a $W$ is called a \textit{wandering set} or a \textit{generator} of the system. We emphasize that we do not exclude that $\mu(W)=0$ or $\infty$.

For example, the measurable system of \cref{principal} is dissipative with wandering set $W=\{0\}\times [0,1]$.

For every $m\in \mathbb{Z}$, let $W_m=f^m(W)$. We will consider the following local (i.e. depending on $W$) versions of (H1), (H2), (C1) and (C2), respectively, on $\mathcal{X}$:
\begin{enumerate}
    \item[(LH1)] For any $A\in \mathcal{B}(W_m)$, $m\in \mathbb{Z}$, with finite measure, $\chi_A\in\mathcal{X};$
    \item[(LH2)] The set of simple functions of the form $\sum_{j=1}^M a_j \chi_{A_j}$ is dense in $\mathcal{X}$, where for every $j$, $\mu(A_j)<\infty$ and $A_j\in \mathcal{B}(W_m)$ for some $m\in \mathbb{Z}$;
    \item[(LC1)] The family of maps
    \[
        I_m:\mathcal{X}\to L^0_\mu(W_m),\ \varphi \mapsto \varphi |_{W_m},
    \]
    $m\in \mathbb{Z}$, is equicontinuous;
    \item[(LC2)] The family of maps		
		\[
        J_A:\mathcal{B}(A)\to \mathcal{X},\ C\mapsto \chi_C,
    \]
		$A\in \mathcal{B}(W_m)$ for some $m\in \mathbb{Z}$ and of finite measure, is equicontinuous.
\end{enumerate}
We interpret again conditions (LH2) and (LC2) as implying (LH1).

\begin{remark}
The conditions (H1), (C1) and (C2) imply the corresponding local conditions (use \cref{lem-cont} for (C2)), while (H2)$\&$(C2) implies (LH2). Thus the set of four local conditions is weaker than the set (H1), (H2), (C1) and (C2).

When we take, in \cref{ex:seqsp}, the measure given by $\mu(\{n\})=\frac{1}{n}$, $n\in\mathbb{N}$, and $W=\{1\}$ then we see that (LH1)$\&$(LC1) holds (see the comment after \cref{thm:bilshift} below), while (H1)$\&$(C1) does not. When we take, in the same example, for $\mu$ a finite measure and $W=\{1\}$ then (LH2)$\&$(LC2) holds while (H1) does not, so that (H2) and (C2) are not even well defined.
\end{remark}

The following theorem shows that, for dissipative systems, and under the weaker conditions (LH1) and (LC1), recurrence implies hypercyclicity, so that these two notions are equivalent in this context.

\begin{theorem}\label{th: recurrent}
     Let $(X,\mathcal{B},\mu,f)$ be a dissipative system, with $\mathcal{X}$ satisfying \emph{(LH1)} and \emph{(LC1)}. If $T_f$ is recurrent, then $f^{-1}(\mathcal{B})=_{\emph{\text{ess}}}\mathcal{B}$ and f satisfies \emph{(HRC)}.
\end{theorem}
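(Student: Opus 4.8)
The plan is to derive the two conclusions separately: the essential surjectivity $f^{-1}(\mathcal{B})=_{\text{ess}}\mathcal{B}$ from recurrence alone, and \emph{(HRC)} from recurrence together with the dissipative structure. For the first, I would note that a recurrent operator always has dense range: given a non-empty open $U\subseteq\mathcal{X}$, recurrence yields $n\geq 1$ and $u\in U$ with $T_f^n u\in U$, and since $T_f^n u=T_f(T_f^{n-1}u)$ lies in the range of $T_f$, that range meets every $U$ and is dense. Granting dense range, I would then obtain $f^{-1}(\mathcal{B})=_{\text{ess}}\mathcal{B}$ by the argument of \cref{Whitley}(a) localised to the present weaker hypotheses: it suffices to treat a set $A$ contained in a single level $W_m$ and of finite measure (the general case follows by $\sigma$-finiteness), for which $\chi_A\in\mathcal{X}$ by \emph{(LH1)}; writing $\chi_A$ as an $\mathcal{X}$-limit of functions $\varphi_n\circ f=T_f\varphi_n$ and passing, via \emph{(LC1)} and a subsequence, to convergence $\mu$-a.e.\ on each level, one sees that $\chi_A$ agrees a.e.\ with an $f^{-1}(\mathcal{B})$-measurable function, i.e.\ $A=_{\text{ess}}f^{-1}(B)$ for some $B\in\mathcal{B}$.

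For \emph{(HRC)} it suffices, by \cref{rem: BDP}(b), to fix $A$ of finite measure and $0<\varepsilon<1$ and to produce $B\subseteq A$ and an arbitrarily large $k$ with $\mu(A\setminus B)<\varepsilon$, $\mu(f^{-k}(B))<\varepsilon$ and $\mu^*(f^k(B))<\varepsilon$. Since $\sum_m\mu(A\cap W_m)=\mu(A)<\infty$, I would first fix a finite set $F\subseteq\mathbb{Z}$ with $\mu(A\setminus A')<\varepsilon/4$, where $A':=\bigcup_{m\in F}(A\cap W_m)$; note $\chi_{A'}\in\mathcal{X}$ by \emph{(LH1)}. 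Put $\varepsilon':=\varepsilon/(4|F|)$ and, using \emph{(LC1)}, choose $\delta>0$ so that $\|\psi\|<\delta$ forces $d(\psi|_{W_j},0)<\varepsilon'$ in $L^0_\mu(W_j)$ for every $j$ simultaneously. Applying recurrence to $U=B(2\chi_{A'},\delta)$ gives infinitely many $k$, of which I pick one with $k>\max F-\min F$, together with $\varphi\in\mathcal{X}$ satisfying $\|\varphi-2\chi_{A'}\|<\delta$ and $\|\varphi\circ f^k-2\chi_{A'}\|<\delta$. By the choice of $\delta$ and \cref{lemma} applied on each $W_j$, the exceptional sets
\[
P_j=\{x\in W_j:|\varphi(x)-2\chi_{A'}(x)|\geq 1\},\qquad Q_j=\{x\in W_j:|\varphi(f^k(x))-2\chi_{A'}(x)|\geq 1\}
\]
satisfy $\mu(P_j)<\varepsilon'$ and $\mu(Q_j)<\varepsilon'$ for all $j$ (here $\varphi\circ f^k=T_f^k\varphi\in\mathcal{X}$).

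I would then set $B:=A'\setminus\bigcup_{m\in F}(P_m\cup Q_m)$, so that $\mu(A\setminus B)<\varepsilon/4+2|F|\varepsilon'<\varepsilon$ and, on $B$, one has $|\varphi-2|<1$. The key structural fact is that in a dissipative system $f^{-k}(W_m)=W_{m-k}$ and $f^k(W_m)\subseteq W_{m+k}$; and since $k>\max F-\min F$, for every $m\in F$ both $m+k$ and $m-k$ lie outside $F$, so $\chi_{A'}$ vanishes on $W_{m\pm k}$. Hence for $x\in B\cap W_m$ the relation $|\varphi(f^k(x))-2|<1$ (from $x\notin Q_m$) forces $|\varphi(f^k(x))|>1$, i.e.\ $f^k(x)\in P_{m+k}$; symmetrically, for $y\in f^{-k}(B)\cap W_{m-k}$ we have $f^k(y)\in B$, so $|\varphi(f^k(y))|>1$ gives $y\in Q_{m-k}$. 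Summing the uniform bounds over the finite set $F$ yields $\mu^*(f^k(B))\leq\sum_{m\in F}\mu(P_{m+k})<\varepsilon$ and $\mu(f^{-k}(B))\leq\sum_{m\in F}\mu(Q_{m-k})<\varepsilon$, as required.

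The main obstacle is precisely the point where dissipativity is indispensable. Recurrence only returns the orbit to a single neighbourhood of $2\chi_{A'}$, giving one ``height'' of approximation, whereas the hypercyclic argument of \cref{th: hc implies scc} exploited two target heights ($2\chi_A$ and $4\chi_A$) in order to detect simultaneously the forward and the backward escape of mass. The device that recovers both escapes from a single return is the rigid level decomposition: the mass of $A'$ sits on finitely many levels, $f^{\pm k}$ transports it to levels $W_{m\pm k}$ that for large $k$ leave $F$, where the approximation to $2\chi_{A'}$ demands that $\varphi$ be near $0$ while the return condition demands that it be near $2$ — an incompatibility that confines the transported mass to the uniformly small exceptional sets $P_{m+k}$ and $Q_{m-k}$. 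Making the choice of $F$, $\varepsilon'$, $\delta$ and $k$ in this order, and checking that $\varepsilon'<1$ (so that \cref{lemma} applies), is the only bookkeeping that remains.
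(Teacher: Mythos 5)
Your proof is correct and follows essentially the same route as the paper's: truncate $A$ to finitely many levels, use the equicontinuity of \emph{(LC1)} together with \cref{lemma} levelwise, apply recurrence once to a neighbourhood of $2\chi_{A'}$ with $k$ so large that the shifted levels $W_{m\pm k}$ miss the support of $A'$, and exploit the resulting incompatibility (near $0$ versus near $2$) to bound $\mu(f^{-k}(B))$ and $\mu^*(f^k(B))$ --- your exceptional sets $P_j,Q_j$ are simply the complement-form of the paper's good set $C=\{|\varphi-2|<1\}$ with $B=C\cap f^{-k}(C)\cap A$. The only (welcome) addition is that you spell out the localisation of \cref{Whitley}(a) to the hypotheses \emph{(LH1)} and \emph{(LC1)}, which the paper invokes without comment.
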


\begin{proof}
    Since $T_f$ is recurrent, it has dense range, so that $f^{-1}(\mathcal{B})=_{\text{ess}}\mathcal{B}$ by \cref{Whitley}(a).

    Now let $A\in \mathcal{B}$ with finite measure and $0<\varepsilon<1$. It suffices again to find a measurable set $B\subseteq A$ and some $k\geq 1$ such that \eqref{eq: HRC1} holds.     
    Since
    \[
		\mu(A)=\sum_{m\in \mathbb{Z}} \mu(A\cap W_m),
		\]
    there exists $N\geq 0$ such that 
    \[
		\sum_{|m|> N} \mu(A\cap W_m)<\varepsilon.
		\]
    Thus we can suppose that 
    \begin{equation}\label{eq: A subset W_n}
        A\subseteq \bigcup_{|m|\leq N}W_m.
    \end{equation}

    Using (LH1) and the equicontinuity of the family $(I_m)_{m\in \mathbb{Z}}$ by (LC1), let $\delta>0$ be such that
    \begin{equation}\label{eq: equicontinuity}
        \|\psi-2\chi_A\|<\delta \implies d(\psi |_{W_m},2\chi_A |_{W_m})<\frac{\varepsilon}{2(2N+1)} \hspace*{0.5cm} \forall m\in \mathbb{Z}.
    \end{equation}

    Since $T_f$ is recurrent, there exist $\varphi\in \mathcal{X}$ and $k\geq 1$ such that
    \begin{equation}\label{eq: recurrence}
        \|\varphi-2\chi_A\|<\delta,\hspace*{0.5cm} \|\varphi \circ f^k-2\chi_A\|<\delta \hspace*{0.5cm}\text{and}\hspace*{0.5cm} W_m\cap (W_{l-k}\cup W_{l+k})=\varnothing
    \end{equation}
    for all $|m|,|l|\leq N$.

    Let \[C=\{x\in X: |\varphi(x)-2|<1\},\]
    and define \[B=C\cap f^{-k}(C)\cap A.\]
    
    First, if $x\in (A\cap W_m)\setminus C$, $m\in \mathbb{Z}$, then $|\varphi (x)-2\chi_A(x)|=|\varphi(x)-2|\geq 1$,  so that \labelcref{eq: recurrence}, \labelcref{eq: equicontinuity} and \cref{lemma} imply
    \[\mu((A\cap W_m)\setminus C)<\frac{\varepsilon}{2(2N+1)}.\]
    Hence, by \labelcref{eq: A subset W_n},  $\mu(A\setminus C)<\varepsilon/2$.

    Next,  if $x\in (A\cap W_m)\setminus f^{-k}(C)$, $m\in \mathbb{Z}$, then $|\varphi \circ f^k(x)-2\chi_A(x)|=|\varphi\circ f^k(x)-2|\geq 1$,  so that \labelcref{eq: recurrence}, \labelcref{eq: equicontinuity} and \cref{lemma} also imply that 
    \[\mu((A\cap W_m)\setminus f^{-k}(C))<\frac{\varepsilon}{2(2N+1)}.\]
    Hence, by \labelcref{eq: A subset W_n},  $\mu(A\setminus f^{-k}(C))<\varepsilon/2$.

    Altogether we deduce that $\mu(A\setminus B)<\varepsilon$.

    Now, let $x\in f^{-k}(C)\cap W_{m-k}$, $|m|\leq N$. By \labelcref{eq: recurrence} and \labelcref{eq: A subset W_n}, $x\notin A$, so that $|\varphi \circ f^k(x)-2\chi_A(x)|=|\varphi \circ f^k(x)|> 1$. By \labelcref{eq: recurrence}, \labelcref{eq: equicontinuity} and \cref{lemma} we get that 
    \[\mu(f^{-k}(C)\cap W_{m-k})<\frac{\varepsilon}{2(2N+1)}, \hspace*{0.5cm} |m|\leq N\]
    and hence, by \labelcref{eq: A subset W_n},
    \[\mu(f^{-k}(B))\leq \mu(f^{-k}(C\cap A))\leq \sum_{|m|\leq N} \mu(f^{-k}(C)\cap W_{m-k})<\varepsilon.\]

    Finally, let $x\in C\cap W_{m+k}$, $|m|\leq N$. By \labelcref{eq: recurrence} and \labelcref{eq: A subset W_n}, $x\notin A$, so that $|\varphi (x)-2\chi_A(x)|=|\varphi(x) |> 1$. By \labelcref{eq: recurrence}, \labelcref{eq: equicontinuity} and \cref{lemma} we see that 
    \[\mu(C\cap W_{m+k})<\frac{\varepsilon}{2(2N+1)}, \hspace*{0.5cm} |m|\leq N\]
    and therefore, by \labelcref{eq: A subset W_n},
    \[\mu^*(C\cap f^{k}(A))\leq \sum_{|m|\leq N} \mu(C\cap W_{m+k})<\varepsilon. \]
    Since $f^k(B)\subseteq C\cap f^k(A)$, it follows that $\mu^*(f^k(B))<\varepsilon$.
\end{proof}

For the analogue of \cref{th: ssc implies hc} we need a local version of \cref{lem-cont}.

\begin{lemma}\label{lem-contloc}
Condition \emph{(LC2)} holds if, and only if, for any $N\geq 1$ and for any $\varepsilon>0$ there is some $\delta>0$ such that, if $A,B\in\mathcal{B}$ satisfy $A\subseteq \bigcup_{m\in I} W_m$ and $B\subseteq \bigcup_{m\in J} W_m$ with $\text{\emph{card}}(I)$, $\text{\emph{card}}(J)\leq N$, then 
\[
\mu(A\Delta B)<\delta \Longrightarrow \|\chi_A-\chi_B\|<\varepsilon.
\]
\end{lemma}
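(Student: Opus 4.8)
The plan is to prove the two implications separately, the reverse one being essentially immediate. For the direction that the stated uniform condition implies \emph{(LC2)}, I would simply specialise to $N=1$ and $I=J=\{m\}$: if $A_0\in\mathcal{B}(W_m)$ has finite measure and $C,D\in\mathcal{B}(A_0)$, then $C,D\subseteq W_m$, and the hypothesis applied with this single index yields a $\delta$, depending only on $\varepsilon$, such that $\mu(C\Delta D)<\delta$ forces $\|\chi_C-\chi_D\|<\varepsilon$. Since this $\delta$ is uniform over $m$, this is precisely the equicontinuity of the family $(J_{A_0})$, so \emph{(LC2)} holds.

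For the forward direction I would first extract from \emph{(LC2)} its ``single-block'' form: for every $\varepsilon>0$ there is $\delta>0$ such that, for every $m\in\mathbb{Z}$ and every finite-measure $C\subseteq W_m$, $\mu(C)<\delta$ implies $\|\chi_C\|<\varepsilon$. This comes from applying equicontinuity at the point $\varnothing$, taking $A_0=C$ (which has finite measure, as $\mu(C)<\delta<\infty$) and comparing $C$ to $\varnothing\in\mathcal{B}(C)$, so that $J_{A_0}(\varnothing)=0$. As in the proof of \cref{lem-cont}, I would then reduce the general statement to the case $B=\varnothing$: writing $\chi_A-\chi_B=\chi_{A\setminus B}-\chi_{B\setminus A}$, the sets $A\setminus B$ and $B\setminus A$ are again contained in unions of at most $N$ of the $W_m$ (those indexed by $I$ and $J$), and each has measure at most $\mu(A\Delta B)<\delta$; by the triangle inequality it therefore suffices to find, for each $N$ and each $\varepsilon$, a $\delta$ such that any $E\subseteq\bigcup_{m\in I}W_m$ with $\operatorname{card}(I)\leq N$ and $\mu(E)<\delta$ satisfies $\|\chi_E\|<\varepsilon$.

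The heart of the argument is then a short bootstrap from one block to $N$ blocks. Since the $W_m$ are pairwise disjoint (this is where dissipativity enters), I would split $E=\bigsqcup_{m\in I}(E\cap W_m)$ into at most $N$ disjoint pieces, each contained in a single $W_m$ and of measure $<\delta$, and write $\chi_E=\sum_{m\in I}\chi_{E\cap W_m}$. Choosing $\delta$ to be the single-block threshold associated with the target $\varepsilon/(2N)$, each summand has norm $<\varepsilon/(2N)$, so the triangle inequality gives $\|\chi_E\|<\varepsilon/2$; feeding this into the reduction to $B=\varnothing$ then yields $\|\chi_A-\chi_B\|\leq\|\chi_{A\setminus B}\|+\|\chi_{B\setminus A}\|<\varepsilon$.

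I do not expect a serious obstacle here: unlike the global \cref{lem-cont}, no Baire-category or $\sigma$-finiteness argument is needed, because the equicontinuity built into \emph{(LC2)} already furnishes a $\delta$ uniform over all blocks $W_m$, so the only ``upgrade'' required is the elementary summation over the at most $N$ disjoint pieces. The two points that genuinely require care are the correct reading of ``equicontinuity'' (reducing it to the value at $\varnothing$) and the observation that $A$ and $B$ need not have finite measure, whereas the difference sets $A\setminus B$ and $B\setminus A$ always do; thus $\chi_A-\chi_B$ is a legitimate element of $\mathcal{X}$ by \emph{(LH1)}, and only these finite-measure pieces ever enter the estimates.
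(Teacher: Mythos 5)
Your proof is correct and takes essentially the same route the paper sketches: reduce to $B=\varnothing$, observe that the case $N=1$ is exactly (LC2) applied at the point $\varnothing$ (with $A_0=C$ itself, legitimate since $\mu(C)<\delta<\infty$), and handle general $N$ by splitting over the pairwise disjoint blocks $W_m$ and summing with the triangle inequality. Your side remarks --- that the uniform $\delta$ built into the equicontinuity of (LC2) makes the summability trick from the proof of \cref{lem-cont} unnecessary, and that only the finite-measure pieces $A\setminus B$ and $B\setminus A$ (block by block, via (LH1)) ever enter $\mathcal{X}$ --- are accurate fillings-in of the paper's two-sentence argument.
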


Indeed, it suffices to show that (LC2) implies the stated condition for $B=\varnothing$. The case $N=1$ holds with (LC2), the general case requires an application of the triangle inequality.

We now have the announced analogue.

\begin{theorem}\label{th: dissipative src implies weak mixing}
   Let $(X,\mathcal{B},\mu,f)$ be a dissipative system, with $\mathcal{X}$ satisfying conditions \emph{(LH2)} and \emph{(LC2)}. Assume that $f^{-1}(\mathcal{B})=_{\emph{\text{ess}}}\mathcal{B}$ and that $f$ satisfies \emph{(HRC)}. Then $T_f$ is weakly mixing.
\end{theorem}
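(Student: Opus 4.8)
The plan is to run the proof of \cref{th: ssc implies hc} essentially verbatim, replacing each appeal to (H2) by (LH2) and each appeal to (C2) via \cref{lem-cont} by (LC2) via \cref{lem-contloc}. The only genuinely new ingredient is bookkeeping: I must guarantee that every set whose characteristic function is estimated in the construction is supported on a uniformly bounded number of the wandering pieces $W_m$, so that the equicontinuity provided by \cref{lem-contloc} applies with one fixed cardinality bound $N$.

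To that end I would first record the shift behaviour of the decomposition $X=\bigcup_m W_m$. Since the $W_m$, $m\in\mathbb{Z}$, are pairwise disjoint with union $X$ and $W_{m+1}=f^{m+1}(W)=f(W_m)$, disjointness forces $f^{-1}(W_m)=W_{m-1}$: if $x\in f^{-1}(W_m)$ lies in $W_l$ then $f(x)\in f(W_l)=W_{l+1}$, so $W_{l+1}\cap W_m\neq\varnothing$ gives $l=m-1$. Hence $f^{k}(W_m)=W_{m+k}$ and $f^{-k}(W_m)=W_{m-k}$ for every $k\geq 1$, and consequently any set contained in $\bigcup_{m\in I}W_m$ for a finite $I$ has image and preimage under $f^{k}$ contained in $\bigcup_{m\in I}W_{m+k}$ and $\bigcup_{m\in I}W_{m-k}$, respectively, i.e. in a shifted family of the same cardinality.

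Now, given non-empty open sets $U,V,\tilde U,\tilde V$, I use (LH2) to pick simple functions $\psi_1,\psi_2,\tilde\psi_1,\tilde\psi_2$ whose $\eta$-balls lie inside $U,V,\tilde U,\tilde V$, where each characteristic function occurring is supported on a finite-measure subset of a single $W_m$. Let $I$ be the finite set of indices $m$ that occur, with $\mathrm{card}(I)=N$, let $A$ be the (finite-measure) union of all these supports, so that $A\subseteq\bigcup_{m\in I}W_m$, and let $L=1+\sum_j(|a_j|+|b_j|+|\tilde a_j|+|\tilde b_j|)$. I then apply (LC2) through \cref{lem-contloc} with this $N$ to obtain a single $\varepsilon>0$ such that $\mu(E\Delta F)<2\varepsilon$ implies $\|\chi_E-\chi_F\|<\eta/(4L)$ whenever $E$ and $F$ each lie in at most $N$ of the pieces $W_m$, and I invoke (HRC) to get $B\subseteq A$ and $k\geq 1$ with $\mu(A\setminus B)<\varepsilon$, $\mu(f^{-k}(B))<\varepsilon$ and $\mu^*(f^{k}(B))<\varepsilon$. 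Choosing a measurable $C'\supseteq f^{k}(B)$ with $\mu(C')<\varepsilon$ and setting $C=C'\cap\bigcup_{m\in I}W_{m+k}$ (measurable, since each $W_{m+k}=f^{m+k}(W)\in\mathcal{B}$), I still have $f^{k}(B)\subseteq C$, $\mu(C)<\varepsilon$ and now $C\subseteq\bigcup_{m\in I}W_{m+k}$.

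From here the construction of $\varphi$ for the pair $(U,V)$ and of $\tilde\varphi$ for $(\tilde U,\tilde V)$ is identical to \cref{th: ssc implies hc}, pulling pieces back through $f^{k}$ by means of \cref{lemma: pseudometric}. The point where the preliminary shift property is used is the verification that every individual set entering a norm estimate lies in at most $N$ pieces: the sets $A_j\cap B$, $(A_j\cap B)\setminus\bigcup_iC_i$, $B_j\cap B$ and $f^{-k}(C_j)$ all lie in $\bigcup_{m\in I}W_m$ (the last because $C_j\subseteq C\subseteq\bigcup_{m\in I}W_{m+k}$ and $f^{-k}(\bigcup_{m\in I}W_{m+k})=\bigcup_{m\in I}W_m$), while $f^{-k}((A_j\cap B)\setminus\bigcup_iC_i)$ lies in $\bigcup_{m\in I}W_{m-k}$ and the various $C_j$ lie in $\bigcup_{m\in I}W_{m+k}$. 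Thus each estimate falls under \cref{lem-contloc} with the fixed bound $N$, and summing the finitely many terms with factor $L$ yields, exactly as before, $\varphi\in U$, $\varphi\circ f^{k}\in V$, $\tilde\varphi\in\tilde U$ and $\tilde\varphi\circ f^{k}\in\tilde V$, so that $k\in N(U,V)\cap N(\tilde U,\tilde V)$ and $T_f$ is weakly mixing. I expect the main (indeed only) obstacle to be this uniform support control; once the shift property $f^{\mp k}(W_m)=W_{m\mp k}$ is in hand, the rest is the routine estimate of \cref{th: ssc implies hc}.
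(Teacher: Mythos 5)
Your proposal is correct and follows essentially the same route as the paper, whose proof simply runs \cref{th: ssc implies hc} verbatim with (LH2) in place of (H2) and (LC2) via \cref{lem-contloc} in place of (C2), noting that every set entering a norm estimate lies in a shifted union $\bigcup_{m\in I}W_{m+l}$ with $l\in\{-k,0,k\}$ and that $C$ may be chosen inside $\bigcup_{m\in I}W_{m+k}$. Your explicit verification of the shift property (where, strictly, $f^k(W_m)=W_{m+k}$ should be the containment $f^k(W_m)\subseteq W_{m+k}$ for $m<0$ when $f$ is not surjective, which is all you use, while the preimage identity $f^{-k}(W_{m+k})=W_m$ holds exactly) is precisely the ``careful reading'' the paper leaves implicit.
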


\begin{proof} We follow verbatim the proof of \cref{th: ssc implies hc}. By (LH2), the set $A$ may belong to $\bigcup_{|m|\leq N} W_m$ for some $N\geq 0$. Then, by (LC2) via \cref{lem-contloc}, equation \labelcref{eq: continuity of J} holds whenever $E$ and $F$ belong to sets $\bigcup_{|m|\leq N} W_{m+l}$ with (possibly different) $l\in\mathbb{Z}$. A careful reading of the proof shows that \labelcref{eq: continuity of J} is only applied to sets of this form (with $l\in\{-k,0,k\}$), where we note that one may choose $C$ to belong to $\bigcup_{|m|\leq N} W_{m+k}$.
\end{proof}

With the aim of approaching the characterizations obtained in this section to the ones known for backward shift operators on sequence spaces, see \cite[Theorem~4.12]{grosse2011linear}, we introduce here two conditions that are specific to the dissipative case. The \textit{Hypercyclic Shift-like Condition} (borrowing the terminology from \cite{daniello2022shiftlike}) reads as follows.  
    \begin{itemize}
        \item[(HSC)]\textit{There exists an increasing sequence of integers $(n_k)_{k\geq 1}$ such that for all $m\in \mathbb{Z}$, for all measurable sets $A\subseteq W_m$ with finite measure, there exists a sequence of measurable sets $B_k\subseteq A$, $k\geq 1$, such that}
				\[
				\mu(A\setminus B_k)\to 0,\hspace*{0.5cm} \mu(f^{-n_k}(B_k))\to 0 \hspace*{0.5cm}\text{and}\hspace*{0.5cm} \mu^*(f^{n_k}(B_k))\to 0.
				\]  
    \end{itemize}
Analogously, we have a \textit{Mixing Shift-like Condition}:
    \begin{itemize}
        \item[(MSC)] \textit{For all $m\in \mathbb{Z}$, for all measurable sets $A\subseteq W_m$ with finite measure, there exists a sequence of measurable sets $B_n\subseteq A$, $n\geq 1$, such that}
				\[
				\mu(A\setminus B_n)\to 0,\hspace*{0.5cm} \mu(f^{-n}(B_n))\to 0 \hspace*{0.5cm}\text{and}\hspace*{0.5cm} \mu^*(f^{n}(B_n))\to 0.
				\]  
    \end{itemize}
		
The equivalence to the conditions (HRC) and (MRC), respectively, is obvious if one takes into account that $\mu(A\setminus \bigcup_{|m|\leq N} W_m) \to 0$; for (HRC) one also has to note \cref{rem: BDP}(c).

Thus by \cref{th: recurrent,th: dissipative src implies weak mixing}, we get the following characterization for dissipative systems.

\begin{theorem}\label{th: dissipative}
      Let $(X,\mathcal{B},\mu,f)$ be a dissipative system, with $\mathcal{X}$ satisfying \emph{(LH1)},  \emph{(LH2)}, \emph{(LC1)} and \emph{(LC2)}. The following are equivalent:
    \begin{enumerate}
        \item[\emph{(a)}] $T_f$ is recurrent;
        \item[\emph{(b)}] $T_f$ is hypercyclic;
        \item[\emph{(c)}] $T_f$ is weakly mixing;
        \item[\emph{(d)}] $f^{-1}(\mathcal{B})=_{\emph{\text{ess}}}\mathcal{B}$ and $f$ satisfies \emph{(HRC)};
        \item[\emph{(e)}] $f^{-1}(\mathcal{B})=_{\emph{\text{ess}}}\mathcal{B}$ and $f$ satisfies \emph{(HSC)}.
    \end{enumerate}
\end{theorem}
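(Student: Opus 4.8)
The plan is to prove \cref{th: dissipative} by establishing a cycle of implications among (a)--(e), drawing on the two main theorems of this section together with some easy observations. First I would record the trivial implication (c)$\Rightarrow$(b)$\Rightarrow$(a): a weakly mixing operator is topologically transitive, hence hypercyclic (by the Birkhoff transitivity theorem), and hypercyclicity plainly implies recurrence, since any hypercyclic vector's orbit returns to every neighbourhood of itself. These follow from the elementary chain of implications stated in the introduction and require no new work in the dissipative setting.

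Next I would close the loop by feeding recurrence back into the runaway conditions. Since the hypotheses (LH1), (LH2), (LC1) and (LC2) are all assumed, I may invoke \cref{th: recurrent}: if $T_f$ is recurrent, then $f^{-1}(\mathcal{B})=_{\text{ess}}\mathcal{B}$ and $f$ satisfies (HRC), which is precisely (d). This gives (a)$\Rightarrow$(d). To return from (d) to (c), I would apply \cref{th: dissipative src implies weak mixing}, which states exactly that under (LH2) and (LC2), the combination of $f^{-1}(\mathcal{B})=_{\text{ess}}\mathcal{B}$ and (HRC) forces $T_f$ to be weakly mixing; this yields (d)$\Rightarrow$(c). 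At this point (a), (b), (c) and (d) are all equivalent.

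It then remains only to bring (e) into the cycle, and here I would rely on the remark immediately preceding the theorem, which asserts the equivalence of (HRC) and (HSC). Concretely, (HSC) is a formally stronger, ``shift-like'' reformulation in which a single increasing sequence $(n_k)_k$ works simultaneously for all wandering levels $W_m$; its equivalence to (HRC) rests on the observation that $\mu(A\setminus\bigcup_{|m|\leq N} W_m)\to 0$ as $N\to\infty$ for any set $A$ of finite measure, together with \cref{rem: BDP}(c), which supplies the uniform choice of $(n_k)_k$ in the (HRC)-direction. Since $f^{-1}(\mathcal{B})=_{\text{ess}}\mathcal{B}$ appears identically in both (d) and (e), the equivalence of the runaway conditions gives (d)$\Leftrightarrow$(e) directly.

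The only genuine content lies in \cref{th: recurrent} and \cref{th: dissipative src implies weak mixing}, both already proved; the assembly itself is routine. I do not anticipate a serious obstacle, but the step deserving the most care is verifying that (HSC) and (HRC) really coincide under the stated hypotheses: one must check that the localisation to a single level $W_m$ in (HSC), combined with the $\sigma$-finiteness packaged into \cref{rem: BDP}(c), genuinely reconstructs the global condition (HRC) for an arbitrary finite-measure set $A$, and conversely that restricting (HRC) to subsets of a single $W_m$ is harmless. Once that equivalence is granted, the theorem follows by concatenating the implications above.
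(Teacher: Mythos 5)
Your proposal is correct and follows essentially the same route as the paper: the paper likewise obtains the theorem by combining \cref{th: recurrent} (recurrence implies (d)) with \cref{th: dissipative src implies weak mixing} ((d) implies weak mixing), closing the trivial chain (c)$\Rightarrow$(b)$\Rightarrow$(a), and it handles (e) exactly as you do, via the equivalence of (HSC) and (HRC) based on $\mu(A\setminus\bigcup_{|m|\leq N}W_m)\to 0$ together with \cref{rem: BDP}(c). No gaps; your caution about verifying the (HRC)$\Leftrightarrow$(HSC) equivalence matches the one point the paper itself flags as needing attention.
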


This result not only generalizes \cite[Theorem~2.2]{daniello2024} from $L^p(X,\mathcal{B},\mu)$ to general spaces of measurable functions, it improves it because we do not require $f$ to be invertible.

\begin{remark}
    If we drop the dissipativity hypothesis, $T_f$ being recurrent does not necessarily imply that $T_f$ is hypercyclic. For example, if we take $X=\mathbb{R}$ with the Lebesgue measure and $f:X\to X$ to be the identity, then $T_f:L^2(X)\to L^2(X)$ is recurrent but it is not hypercyclic.
\end{remark}

We again have analogous results to \cref{th: dissipative,th: dissipative src implies weak mixing,th: recurrent} for mixing operators in the dissipative context. In particular, we have the following characterization:

\begin{theorem}\label{t-dissipative-mix}
      Let $(X,\mathcal{B},\mu,f)$ be a dissipative system, with $\mathcal{X}$ satisfying \emph{(LH1)},  \emph{(LH2)}, \emph{(LC1)} and \emph{(LC2)}. The following are equivalent:
    \begin{enumerate}
        \item[\emph{(a)}] $T_f$ is mixing;
        \item[\emph{(b)}] $f^{-1}(\mathcal{B})=_{\emph{\text{ess}}}\mathcal{B}$ and $f$ satisfies \emph{(MRC)};
        \item[\emph{(c)}] $f^{-1}(\mathcal{B})=_{\emph{\text{ess}}}\mathcal{B}$ and $f$ satisfies \emph{(MSC)}.
    \end{enumerate}
\end{theorem}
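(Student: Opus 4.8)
The plan is to establish \cref{t-dissipative-mix} as the mixing counterpart of \cref{th: dissipative}, by proving mixing analogues of \cref{th: recurrent} and of \cref{th: dissipative src implies weak mixing}, and then invoking the equivalence of (MRC) and (MSC). The implication (b)$\Leftrightarrow$(c) is immediate from that equivalence, noted before \cref{th: dissipative}, since $\mu(A\setminus\bigcup_{|m|\leq N}W_m)\to 0$ for any $A$ of finite measure. Thus the substance lies in (a)$\Rightarrow$(b) and (b)$\Rightarrow$(a).

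For (a)$\Rightarrow$(b), I would mirror the proof of \cref{th: recurrent}, using topological mixing in place of recurrence. Since a mixing operator is in particular hypercyclic, it has dense range, so $f^{-1}(\mathcal{B})=_{\text{ess}}\mathcal{B}$ by \cref{Whitley}(a). Fixing $A$ of finite measure and $\varepsilon>0$, the same reduction shows we may assume $A\subseteq\bigcup_{|m|\leq N}W_m$ as in \labelcref{eq: A subset W_n}, and (LH1)$\&$(LC1) supply a $\delta$ as in \labelcref{eq: equicontinuity}. Applying mixing to $U=V=B(2\chi_A,\delta)$ yields an $n_0$ such that for every $n\geq n_0$ there is $\varphi$ with $\|\varphi-2\chi_A\|<\delta$ and $\|\varphi\circ f^n-2\chi_A\|<\delta$; moreover, for $n>2N$ the disjointness part of \labelcref{eq: recurrence}, namely $W_m\cap(W_{l-n}\cup W_{l+n})=\varnothing$ for $|m|,|l|\leq N$, holds automatically because the $W_m$ are pairwise disjoint. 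Running the identical set construction $B_n=C\cap f^{-n}(C)\cap A$ then gives $\mu(A\setminus B_n)<\varepsilon$, $\mu(f^{-n}(B_n))<\varepsilon$ and $\mu^*(f^n(B_n))<\varepsilon$ for all $n\geq\max\{n_0,2N+1\}$. Finally, applying this with $\varepsilon=1/j$ and interpolating the resulting thresholds produces a single sequence $(B_n)_n\subseteq A$ for which all three quantities tend to $0$, that is, (MRC).

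For (b)$\Rightarrow$(a), I would follow the proof of \cref{th: dissipative src implies weak mixing} (itself a localization of \cref{th: ssc implies hc}) essentially verbatim, with (MRC) replacing (HRC) and with only one pair of open sets $U,V$, since now we want $N(U,V)$ to be cofinite rather than merely nonempty. Given $U,V$, choose $A$, the simple functions and $\eta$ as there, with $A\subseteq\bigcup_{|m|\leq N}W_m$ by (LH2); then (LC2) via \cref{lem-contloc} yields $\varepsilon>0$ such that \labelcref{eq: continuity of J} holds for sets lying inside any shift $\bigcup_{|m|\leq N}W_{m+l}$. Condition (MRC) provides sets $B_n\subseteq A$ with $\mu(A\setminus B_n),\mu(f^{-n}(B_n)),\mu^*(f^n(B_n))\to 0$, hence all $<\varepsilon$ for $n$ large; for each such $n$ the construction of $\varphi$ (using \cref{lemma: pseudometric} with $f^{-n}$, and choosing the covering set $C\subseteq\bigcup_{|m|\leq N}W_{m+n}$) places $\varphi\in U$ and $\varphi\circ f^n\in V$, so that $n\in N(U,V)$. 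Thus $N(U,V)$ is cofinite and $T_f$ is mixing.

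I expect the only genuinely delicate point to be the diagonalization in (a)$\Rightarrow$(b): each fixed $\varepsilon$ yields the three estimates merely for all sufficiently large $n$, and one must stitch these together across $\varepsilon=1/j$ into one $n$-indexed sequence $(B_n)_n$ realizing the limits in (MRC), taking care that the thresholds are increasing and that every $n$ is assigned a set. Everything else is a transcription of the already-established \cref{th: recurrent,th: dissipative src implies weak mixing}, the simplification being that mixing hands us the needed intersections for all large $n$ at once, which is precisely what the $n$-indexed sequence in (MRC) requires.
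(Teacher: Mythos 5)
Your proposal is correct and matches the paper's intended argument exactly: the paper gives no separate proof of \cref{t-dissipative-mix}, stating only that it follows by the mixing analogues of \cref{th: recurrent} and \cref{th: dissipative src implies weak mixing} (with (MRC)/(MSC) replacing (HRC)/(HSC)), which is precisely the transcription you carry out, including the correct observations that the disjointness condition in \labelcref{eq: recurrence} is automatic for $n>2N$ and that the covering set may be taken inside $\bigcup_{|m|\leq N}W_{m+n}$. Your explicit handling of the diagonalization over $\varepsilon=1/j$ is a detail the paper leaves implicit, and it is done correctly.
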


We have also dissipative analogues of \cref{thm: KitaiNec,thm: Kitai,thm: kitai characterization,thm: Gethner-Shapiro characterization}. We first formulate the \textit{Kitai Shift-Like Condition} for an invertible dissipative system:
\begin{enumerate}
    \item[(KSC)] \textit{For all $m\in\mathbb{Z}$, for all measurable sets $A\subseteq W_m$ with finite measure and for every $\varepsilon>0$, there exists a measurable set $B\subseteq A$ such that
    \[
    \mu(A\setminus B)<\varepsilon, \hspace*{0.5cm} \mu(f^{-n}(B))\to 0 \hspace*{0.5cm} \text{and} \hspace*{0.5cm} \mu(f^n(B))\to 0.
    \] }
\end{enumerate}

Then we have in particular the following:

\begin{theorem}\label{thm: kitai characterization diss}
Let $(X,\mathcal{B},\mu,f)$ be an invertible dissipative system. Suppose that $\mathcal{X}$ satisfies \emph{(LH1)}, \emph{(LH2)}, \emph{(LC1)} and \emph{(LC2)}. The following are equivalent:
\begin{itemize}
    \item[(a)] $T_f$ satisfies Kitai's Criterion;
    \item[(b)] $f$ satisfies \emph{(KRC)};
    \item[(c)] $f$ satisfies \emph{(KSC)}.
\end{itemize}
\end{theorem}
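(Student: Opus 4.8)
The plan is to establish the cycle (a) $\Rightarrow$ (c) $\Rightarrow$ (b) $\Rightarrow$ (a). The equivalence (b) $\Leftrightarrow$ (c) is the passage between (KRC) and (KSC), analogous to that between (HRC) and (HSC), while the two remaining implications are the dissipative analogues of \cref{thm: KitaiNec} and \cref{thm: Kitai}, obtained by localizing their proofs exactly as \cref{th: recurrent} and \cref{th: dissipative src implies weak mixing} localize the proofs of \cref{th: hc implies scc} and \cref{th: ssc implies hc}. Since the system is invertible I may take $g=f^{-1}$, which is simultaneously a non-singular left and right inverse of $f$; then $g^{-n}(B)=f^{n}(B)$, so (KRC) is precisely the condition figuring in those two theorems (with $S=T_{f^{-1}}$).

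I would begin with (c) $\Rightarrow$ (b). Given $A$ of finite measure and $\varepsilon>0$, dissipativity gives $\mu(A)=\sum_{m}\mu(A\cap W_m)$, so I choose $N$ with $\sum_{|m|>N}\mu(A\cap W_m)<\varepsilon/2$ and apply (KSC) to each $A\cap W_m\subseteq W_m$, $|m|\leq N$, with tolerance $\varepsilon/(2(2N+1))$. Setting $B=\bigcup_{|m|\leq N}B_m$, the bound $\mu(A\setminus B)<\varepsilon$ follows by summing the finitely many errors, and $\mu(f^{-n}(B))\leq\sum_{|m|\leq N}\mu(f^{-n}(B_m))\to 0$ (and likewise for $f^{n}$) because a finite sum of null sequences is null. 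In contrast to the passage from (HSC) to (HRC), no common sequence $(n_k)_k$ has to be arranged here, which makes the step cleaner. The converse (b) $\Rightarrow$ (c) is immediate, since every $A\subseteq W_m$ of finite measure is in particular a set of finite measure.

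For (a) $\Rightarrow$ (c) I would run the argument of \cref{thm: KitaiNec} with $A\subseteq W_m$ fixed. Using (LH1) and the equicontinuity in (LC1) I choose $\delta$ controlling the restrictions $\psi|_{W_l}$ uniformly in $l$, pick $\varphi\in\mathcal{X}_0$ and $\tilde{\varphi}\in\mathcal{Y}_0$ near $2\chi_A$, and form $B=\{|\varphi-2|<1\}\cap\{|\tilde{\varphi}-2|<1\}\cap A$. The estimate $\mu(A\setminus B)<\varepsilon$ involves only the single level $W_m$ and follows from \cref{lemma}. The decisive point is that, because $B\subseteq W_m$, one has $f^{-n}(B)\subseteq W_{m-n}$ and $f^{n}(B)\subseteq W_{m+n}$, so each iterate lands in a single level set; hence the conditions $T_f^{n}\varphi\to 0$ and $S^{n}\tilde{\varphi}\to 0$ of Kitai's Criterion, combined with $S=T_{f^{-1}}$ and the equicontinuity (LC1), control $\mu(f^{-n}(B))$ and $\mu(f^{n}(B))$ through a single restriction map and force them to $0$. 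This localization is exactly what allows (LC1) to replace the global continuity (C1) used in \cref{thm: KitaiNec}.

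Finally, for (b) $\Rightarrow$ (a) I would adapt \cref{thm: Kitai}. By (LH2) it suffices to work with simple functions whose pieces lie in single level sets; for each piece $A_j\subseteq W_{m_j}$, (KRC) furnishes $B_j\subseteq A_j$ with $\mu(A_j\setminus B_j)$ small and $\mu(f^{\pm n}(B_j))\to 0$, and \cref{lem-contloc} in the single-level case $N=1$ turns this measure bound into the norm bound $\|\chi_{A_j}-\chi_{B_j}\|$ small, giving density of the set $\mathcal{X}_0$ of simple functions built from such $B_j$. Taking $\mathcal{Y}_0=\mathcal{X}_0$ and $S=T_{f^{-1}}$, the relations $T_f^{n}\varphi\to 0$ and $S^{n}\varphi\to 0$ follow since $f^{-n}(B_j)\subseteq W_{m_j-n}$ and $f^{n}(B_j)\subseteq W_{m_j+n}$ are single level sets, so \cref{lem-contloc} again converts $\mu(f^{\pm n}(B_j))\to 0$ into vanishing norms; the invariance $S(\mathcal{X}_0)\subseteq\mathcal{X}_0$ and the identity $T_fS=\mathrm{id}$ are checked verbatim as in \cref{thm: Kitai}. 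I expect the main obstacle to be the (a) $\Rightarrow$ (c) step, namely verifying that the purely local information of (LC1) still pins down the measures of the iterates $f^{\pm n}(B)$; the resolution is precisely the observation that dissipativity confines each iterate to a single $W_l$.
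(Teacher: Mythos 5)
Your proof is correct and takes essentially the same route as the paper, which states this theorem without a separate proof precisely as the dissipative analogue of \cref{thm: KitaiNec,thm: Kitai}, obtained by localizing their arguments via (LH1)/(LC1) and (LH2)/(LC2) in the same way that \cref{th: recurrent,th: dissipative src implies weak mixing} localize \cref{th: hc implies scc,th: ssc implies hc}, together with the routine equivalence (KRC) $\Leftrightarrow$ (KSC) (compare the remark after (MSC)). Your key observation --- that $B\subseteq W_m$ forces $f^{\pm n}(B)\subseteq W_{m\pm n}$, so each iterate is handled by a single restriction map and the equicontinuity in (LC1), respectively the $N=1$ case of \cref{lem-contloc}, suffices --- is exactly the mechanism the paper relies on.
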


We finally have the \textit{Gethner-Shapiro Shift-Like Condition} for an invertible dissipative system:
\begin{enumerate}
    \item[(GSC)] \textit{For all $m\in\mathbb{Z}$, for all measurable sets $A\subseteq W_m$ with finite measure and for every $\varepsilon>0$, there exists a measurable set $B\subseteq A$ and an increasing sequence $(n_k)_{k\geq 1}$ of positive integers such that
    \[
    \mu(A\setminus B)<\varepsilon, \hspace*{0.5cm} \mu(f^{-n_k}(B))\to 0 \hspace*{0.5cm} \text{and} \hspace*{0.5cm} \mu(f^{n_k}(B))\to 0.
    \] }
\end{enumerate}

Then we have:

\begin{theorem}\label{thm: Gethner characterization diss}
Let $(X,\mathcal{B},\mu,f)$ be an invertible dissipative system. Suppose that $\mathcal{X}$ satisfies \emph{(LH1)}, \emph{(LH2)}, \emph{(LC1)} and \emph{(LC2)}. The following are equivalent:
\begin{itemize}
    \item[(a)] $T_f$ is recurrent; 
    \item[(b)] $T_f$ is hypercyclic;
    \item[(c)] $T_f$ is weakly mixing;
    \item[(d)] $f$ satisfies \emph{(HRC)};
    \item[(e)] $f$ satisfies \emph{(GRC)};
    \item[(f)] $f$ satisfies \emph{(HSC)};
    \item[(g)] $f$ satisfies \emph{(GSC)}.
\end{itemize}
\end{theorem}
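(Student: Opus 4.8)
The plan is to reduce everything to the already-established \cref{th: dissipative}. Since $f$ is bijective and bimeasurable, $f^{-1}(\mathcal{B})=_{\text{ess}}\mathcal{B}$ holds automatically, so \cref{th: dissipative} already gives that (a), (b), (c), (d) and (f) are equivalent; it remains to fold the two Gethner--Shapiro conditions (e), i.e.\ (GRC), and (g), i.e.\ (GSC), into this chain. Two of the needed implications are immediate. First, (GRC) applied to a finite-measure set $A\subseteq W_m$ is exactly the conclusion of (GSC), so (e)$\Rightarrow$(g). Second, (GRC)$\Rightarrow$(HRC), i.e.\ (e)$\Rightarrow$(d), follows by a diagonal extraction: applying (GRC) with $\varepsilon=1/j$ yields a set $B^{(j)}\subseteq A$ together with an increasing sequence, and one then selects a single sufficiently large term $m_j$ of that sequence so that both runaway measures are $<1/j$ and $(m_j)_j$ is increasing, using bimeasurability to guarantee $\mu^*=\mu$ on the images $f^{m_j}(B^{(j)})$.

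The substantive step is (c)$\Rightarrow$(e), which I would prove exactly as in \cref{thm: Gethner-Shapiro characterization} but in the local, dissipative framework, i.e.\ merging the argument of \cref{th: recurrent} with Peris's theorem. By Peris, weak mixing yields the Gethner--Shapiro Criterion: dense sets $\mathcal{X}_0,\mathcal{Y}_0$, a single increasing sequence $(n_k)$, and $S=T_{f^{-1}}$ on $\mathcal{Y}_0$ (from $f\circ f^{-1}=\mathrm{id}$), with $T_f^{n_k}\varphi\to 0$ and $S^{n_k}\tilde\varphi\to 0$. Given $A$ of finite measure, I would trim it to $A\subseteq\bigcup_{|m|\le N}W_m$ by dissipativity, pick $\varphi\in\mathcal{X}_0$ and $\tilde\varphi\in\mathcal{Y}_0$ within $\delta$ of $2\chi_A$, and set $B=\{|\varphi-2|<1\}\cap\{|\tilde\varphi-2|<1\}\cap A$. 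Then $\mu(A\setminus B)<\varepsilon$ by (LH1) and the equicontinuity (LC1), exactly as in \cref{th: recurrent}. For the runaway, $f^{-n_k}(B)$ meets only the $2N+1$ levels $W_{m-n_k}$ and is contained in $\{|T_f^{n_k}\varphi|\ge 1\}$, while $f^{n_k}(B)$ meets only $W_{m+n_k}$ and lies in $\{|S^{n_k}\tilde\varphi|\ge 1\}$; since $\|T_f^{n_k}\varphi\|\to 0$ and $\|S^{n_k}\tilde\varphi\|\to 0$, condition (LC1) forces $\mu(f^{-n_k}(B))\to 0$ and $\mu(f^{n_k}(B))\to 0$. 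The decisive point is that the \emph{single} function $\varphi$ (resp.\ $\tilde\varphi$) controls all $2N+1$ levels at the \emph{common} time $n_k$, which is precisely what the single global sequence of the Gethner--Shapiro Criterion supplies. Specialising this construction to $A\subseteq W_m$ gives (c)$\Rightarrow$(g) as well.

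What remains is to close the cycle by bringing (g) back in, and this is where the difficulty concentrates, so I expect it to be the main obstacle. The natural target is (g)$\Rightarrow$(d). In contrast with the global sequence produced above, (GSC) furnishes, for each level-piece $A\cap W_m$, a runaway set \emph{together with its own} sequence $(n_k^{(m)})$; to deduce (HRC) for a general finite-measure $A$, spread over the finitely many levels $|m|\le N$, one must produce a single time along which \emph{all} the pieces run away simultaneously — a swap of $\forall\exists$ into $\exists\forall$ that does not hold formally (a $\sigma$-finite exhaustion of one generator level does produce a single sequence there, with varying sets, but aligning the sequences \emph{across} levels is the genuine issue). My plan would be to transport the problem to the generator: writing $A\cap W_m=f^m(C_m)$ with $C_m\subseteq W$, assemble the single generator-level set $\hat A=\bigcup_{|m|\le N}C_m$, apply (GSC) to $\hat A$ \emph{once} to extract one sequence $(n_k)$ and a set $\hat B\subseteq\hat A$, and push $\hat B$ back up as $B=\bigcup_m f^m(\hat B\cap C_m)\subseteq A$. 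The runaway of a pushed-up piece then reads $\mu(f^{-n_k}(f^m(\hat B\cap C_m)))\le \mu\!\left(f^m\big(f^{-n_k}(\hat B)\big)\right)$, so the estimate transfers precisely when the finitely many maps $C\mapsto\mu(f^mC)$, $|m|\le N$, send $\mu$-small sets to $\mu$-small sets — the analogue of \eqref{eq: measureC} in the proof of \cref{principal}. Securing this transfer uniformly along $(n_k)$ from the non-singularity of $f^{\pm1}$, the local equicontinuity (LC1)--(LC2) and $\sigma$-finiteness alone (that is, without invoking an a priori distortion bound, which is reserved for the later sections), together with the finiteness of $\hat A$, is the crux; once (g)$\Rightarrow$(d) is in place, (g) joins the equivalence class and, with \cref{th: dissipative}, all seven conditions coincide.
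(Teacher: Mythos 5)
Your reduction chain is the right one, and most of it is sound: (e)$\Rightarrow$(g) is indeed a trivial restriction, (e)$\Rightarrow$(d) by the diagonal extraction works (the single multi-level set $B$ supplied by (GRC), together with monotonicity of $C\mapsto\mu(f^{\pm n}(C))$, is exactly what makes it work), and your (c)$\Rightarrow$(e) correctly merges Peris's theorem with the level-trimming argument of \cref{th: recurrent}, which is precisely the intended dissipative analogue of the mechanism in \cref{thm: Gethner-Shapiro characterization} and \cref{thm: KitaiNec}. But the closing implication (g)$\Rightarrow$(d) is not proved: you set up the transport to the generator and then explicitly leave the decisive step — the uniform transfer of smallness under the finitely many maps $E\mapsto\mu(f^m(E))$, $|m|\le N$, \emph{along the whole sequence} $(n_k)$ — as an open ``crux''. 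This is a genuine gap, not a routine verification, and your proposed sources for it are partly wrong: non-singularity of $f^{\pm1}$ plus $\sigma$-finiteness alone cannot yield it, since non-singularity is purely qualitative and the paper's own system in \cref{principal} has unbounded distortion, so no quantitative small-to-small transfer between levels is available from measure-theoretic hypotheses. Moreover the transfer you actually need is not only for subsets of the generator level (the analogue of \eqref{eq: measureC}): in your push-up estimates the sets $f^{\pm n_k}(\hat B)$ live in the levels $W_{\mp n_k}$, which drift to infinity with $k$, so a transfer statement for sets in a \emph{fixed} level is insufficient.

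The step can be closed, but only by re-entering the operator framework rather than staying measure-theoretic: for a single-level set $E\subseteq W_l$ of finite measure, (LC2) via \cref{lem-contloc} converts $\mu(E)$ small into $\|\chi_E\|$ small \emph{uniformly in $l$}; continuity of the finitely many powers $T_f^{\pm m}$, $|m|\le N$ (available since $T_f$ and $T_{f^{-1}}$ are operators on $\mathcal{X}$), gives $\|\chi_{f^m(E)}\|=\|T_f^{-m}\chi_E\|\le\|T_f^{-m}\|\,\|\chi_E\|$; and (LC1) converts $\|\chi_{f^m(E)}\|$ small back into $\mu(f^m(E))$ small, again uniformly over the level in which $f^m(E)$ sits. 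This equicontinuity sandwich yields exactly the uniform implication $\mu(E_k)\to 0\Rightarrow\mu(f^m(E_k))\to 0$ for single-level sets $E_k$ in \emph{arbitrary} levels and fixed $|m|\le N$, which simultaneously handles your push-up of the nearly-full condition ($E=C'_m\setminus\hat B$) and the runaway estimates ($E_k=f^{\pm n_k}(\hat B)$, writing $f^{\mp(n_k\mp m)}(\hat B_m)=f^m(f^{\mp n_k}(\hat B_m))$); one also needs the small preliminary trimming $C'_m=f^{-m}(A_m)\cap V_j$ with $\mu(A_m\setminus f^m(C'_m))$ small, which follows from continuity from below of $\mu(f^m(\cdot))$, not from finiteness of $\mu(f^{-m}(A_m))$, which may fail. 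Note, for comparison, that the paper states \cref{thm: Gethner characterization diss} without proof, as an analogue of \cref{thm: Gethner-Shapiro characterization}; in the global setting the corresponding closing step (GRC)$\Rightarrow$(HRC) really is immediate, whereas in the local setting it is exactly the cross-level synchronization you identified, so this is the one point where an actual argument such as the above must be supplied.
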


\section{An application to backward shift operators}\label{sec: backward shifts}

Our initial aim in \cref{sec: dissipative systems} had been to find a framework for dissipative systems that was large enough to cover the known results for backward shift operators on sequence spaces. Surprisingly, our results allow us to strictly enlarge the family of sequence spaces for which the classical characterization of hypercyclicity holds.

We first look at bilateral shifts.

\begin{theorem}\label{thm:bilshift}
Let $\mathcal{X} \subseteq \mathbb{K}^\mathbb{Z}$ be a Banach space of sequences in which $\text{\emph{span}}\{e_n:n\in\mathbb{Z}\}$ is dense and such that, for every $\varepsilon>0$, there is some $\delta>0$ such that
\begin{equation}\label{eq:LC1}
\|x\|<\delta \implies \forall n\in\mathbb{Z}, \ \min(|x_n|,\|e_n\|)<\varepsilon.
\end{equation}
Suppose that the backward shift $B$ is an operator on $\mathcal{X}$. 

\emph{(a)} $B$ is hypercyclic if, and only if, there is an increasing sequence of integers $(n_k)_{k\geq 1}$ such that, for all $j\in\mathbb{Z}$,
 \[
e_{j-n_k}\to 0 \hspace*{0.5cm} \text{and} \hspace*{0.5cm} e_{j+n_k}\to 0.
\]

\emph{(b)} $B$ is mixing if, and only if, for all $j\in\mathbb{Z}$,
 \[
e_{j-k}\to 0 \hspace*{0.5cm} \text{and} \hspace*{0.5cm} e_{j+k}\to 0.
\]
\end{theorem}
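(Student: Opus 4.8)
The plan is to realize the bilateral shift $B$ as a composition operator on a suitable dissipative measurable system and then invoke the characterizations of \cref{th: dissipative,t-dissipative-mix}. Concretely, I would take $X=\mathbb{Z}$, $\mathcal{B}=\mathcal{P}(\mathbb{Z})$, $f(n)=n+1$, and define the measure by
\[
\mu(\{n\})=\|e_n\|,\qquad n\in\mathbb{Z}.
\]
Then $T_f=B$, the measure is $\sigma$-finite, $f$ is a bijection (so $f^{-1}(\mathcal{B})=_{\mathrm{ess}}\mathcal{B}$ holds automatically and non-singularity is trivial, the only null set being $\varnothing$), and the system is dissipative with wandering set $W=\{0\}$, so that $W_m=f^m(W)=\{m\}$. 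I would stress that I only appeal to the \emph{dissipative} (not the invertible dissipative) theorems, so I need not assume the forward shift is bounded on $\mathcal{X}$ — matching the hypothesis, which assumes only that $B$ is an operator.

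The heart of the argument is to check the four local conditions of \cref{sec: dissipative systems}. Conditions (LH1) and (LH2) are immediate: $\chi_{\{m\}}=e_m\in\mathcal{X}$, and the admissible simple functions are exactly the elements of $\mathrm{span}\{e_n:n\in\mathbb{Z}\}$, dense by hypothesis. For the continuity conditions I would compute the Fréchet metric on the one-point space $L^0_\mu(W_m)=L^0_\mu(\{m\})$: a function there is a scalar $c=x_m$, and a direct evaluation of the infimum defining $d$ gives
\[
d(c,0)=\min\bigl(|c|,\mu(\{m\})\bigr)=\min\bigl(|c|,\|e_m\|\bigr).
\]
Since each $I_m$ is linear and $d$ is translation-invariant, (LC1) amounts to equicontinuity at $0$, i.e. to the statement that for every $\varepsilon>0$ there is $\delta>0$ with $\|x\|<\delta\Rightarrow\min(|x_n|,\|e_n\|)<\varepsilon$ for all $n$ — which is precisely \eqref{eq:LC1}. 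Finally, (LC2) reduces, via the local version of \cref{lem-cont}, to the implication $\mu(\{m\})<\delta\Rightarrow\|e_m\|<\varepsilon$ uniformly in $m$, which with our choice of $\mu$ holds trivially by taking $\delta=\varepsilon$.

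With all hypotheses of \cref{th: dissipative,t-dissipative-mix} in place, the conclusion follows by translating the shift-like runaway conditions. Since $f^{-1}(\mathcal{B})=_{\mathrm{ess}}\mathcal{B}$ is automatic, \cref{th: dissipative} gives that $B$ is hypercyclic if and only if $f$ satisfies (HSC). Testing (HSC) on the wandering pieces, the only nontrivial sets are $A=\{m\}$, and a choice $B_k=\varnothing$ is impossible for large $k$, since it would force $\mu(A\setminus B_k)=\|e_m\|\not\to0$; hence $B_k=\{m\}$ eventually, and (using $\mu^*=\mu$, as $\mathcal{B}$ is the full power set) the remaining two requirements read
\[
\mu(f^{-n_k}(\{m\}))=\|e_{m-n_k}\|\to0\qquad\text{and}\qquad \mu^*(f^{n_k}(\{m\}))=\|e_{m+n_k}\|\to0.
\]
Writing $j=m$, this is exactly condition (a); conversely $B_k=\{m\}$ realizes (HSC) whenever (a) holds. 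Part (b) is obtained identically from \cref{t-dissipative-mix} and (MSC), now with the full sequence $n$ in place of a subsequence $n_k$.

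The only genuinely delicate step is the metric computation together with the choice $\mu(\{n\})=\|e_n\|$, which is what makes \eqref{eq:LC1} coincide \emph{exactly} with (LC1); once this dictionary is fixed, the verifications of (LH1), (LH2), (LC2) and the translation of (HSC)/(MSC) are routine. The one point I would re-examine carefully is that no hidden use of boundedness of the forward shift enters through \cref{th: dissipative,t-dissipative-mix}, which is why it matters that those results are stated for arbitrary dissipative systems.
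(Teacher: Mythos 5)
Your proposal is correct, and it is exactly the route the paper itself sketches in one sentence before its displayed proof (take $\mu(\{n\})=\|e_n\|$, $f(n)=n+1$, $W=\{0\}$, and apply \cref{th: dissipative,t-dissipative-mix} via (HSC) and (MSC)); the proof the paper actually prints, however, is a different, direct one. You fill in the dictionary the paper leaves implicit: the computation $d(x|_{W_m},y|_{W_m})=\min(|x_m-y_m|,\|e_m\|)$ identifying \eqref{eq:LC1} with (LC1); the verifications of (LH1), (LH2) and of (LC2) with $\delta=\varepsilon$; the observation that bijectivity of $f$ gives $f^{-1}(\mathcal{B})=_{\text{ess}}\mathcal{B}$ for free; and the translation of (HSC)/(MSC) on the singletons $W_m=\{m\}$, where $B_k=\{m\}$ eventually because $\mu(\{m\})=\|e_m\|>0$. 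Your caution that only the plain dissipative theorems are invoked is well placed and correct: \cref{th: dissipative,t-dissipative-mix} do not assume an invertible system, so no boundedness of the forward shift is smuggled in (and separability, needed to pass from transitivity to hypercyclicity, holds since $\operatorname{span}\{e_n\}$ is dense). By contrast, the paper's printed proof is short and self-contained: sufficiency is quoted from \cite[Theorem~4.12]{grosse2011linear}, and necessity is proved directly by approximating $2\sum_{|n|\leq N}e_n$ simultaneously by a hypercyclic vector $x$ and by $B^kx$, then reading off $\|e_{j-k}\|<\varepsilon$ and $\|e_{j+k}\|<\varepsilon$ from \eqref{eq:LC1} together with the coordinate estimates $|x_n-2|<1$ and $|x_{n+k}-2|<1$. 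What each buys: your route needs no new estimates and yields extra dividends from \cref{th: dissipative} (for instance, recurrence of $B$ is automatically equivalent to hypercyclicity, and hypercyclicity to weak mixing), while the paper's direct argument keeps this shift characterization independent of the dissipative machinery, which is precisely why the authors include it ``since the result is of independent interest.''
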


This result is a straightforward application of \cref{th: dissipative,t-dissipative-mix}, using conditions (HSC) and (MSC). It suffices to consider the measure $\mu$ on $\mathcal{P}(\mathbb{Z})$ given by $\mu(\{n\})=\|e_n\|$, $n\in \mathbb{Z}$, the transformation $f:\mathbb{Z}\to \mathbb{Z}$ given by $f(n)=n+1$, and $W=\{0\}$. Then $B=T_f:\mathcal{X}\to \mathcal{X}$ satisfies the hypotheses, where for (LC1) one calculates that $d(x|_{W_n},y|_{W_n})= \min(|x_n-y_n|,\mu(\{n\}))$ for $x=(x_n)_n$, $y=(y_n)_n\in\mathcal{X}$. But since the result is of independent interest, we give the (short) direct proof.

\begin{proof}
Sufficiency of the conditions holds whenever $\text{span}\{e_n:n\in\mathbb{Z}\}$ is dense in $\mathcal{X}$ (see \cite[Theorem~4.12]{grosse2011linear} and its proof). On the other hand, let $B$ be hypercyclic. Let $N\geq 0$ and $\varepsilon>0$, and choose a corresponding $\delta>0$ by \labelcref{eq:LC1}. By hypercyclicity there is some $x=(x_n)_n\in\mathcal{X}$ and some $k>2N$ such that
\begin{equation}\label{eq:hc}
\Big\|x-2\sum_{|n|\leq N}e_n\Big\|<\delta \hspace*{0.5cm} \text{and}\hspace*{0.5cm} \Big\|B^kx-2\sum_{|n|\leq N}e_n\Big\|<\delta.
\end{equation}
Since \labelcref{eq:LC1} implies the continuity of the coordinate functionals, and by choosing $\delta$ smaller if necessary, we can also have that, for $|n|\leq N$,
\begin{equation}\label{eq:hc2}
|x_n-2|<1 \hspace*{0.5cm} \text{and}\hspace*{0.5cm} |x_{n+k}-2|<1.
\end{equation}
Now let $|j|\leq N$. Then $|j-k|>N$, so that by the first inequality in \labelcref{eq:hc2},
\[
\Big|\Big[B^kx-2\sum_{|n|\leq N}e_n\Big]_{j-k}\Big| = |x_j|>1,
\]
where $[x]_n$ denotes the $n$-th element in the sequence $x$. Then \labelcref{eq:LC1} together with the second inequality in \labelcref{eq:hc} implies that $\|e_{j-k}\|<\varepsilon$. 

Similarly, since $|j+k|>N$, the second inequality in \labelcref{eq:hc2} gives that
\[
\Big|\Big[x-2\sum_{|n|\leq N}e_n\Big]_{j+k}\Big| = |x_{j+k}|>1,
\]
and hence, by \labelcref{eq:LC1} together with the first inequality in \labelcref{eq:hc}, $\|e_{j+k}\|<\varepsilon$. 

This proves the necessity in the case of hypercyclicity. The proof in the mixing case is done analogously.
\end{proof}

In exactly the same way we obtain the unilateral case. Here, when applying \cref{th: dissipative,t-dissipative-mix}, one considers the measure $\mu$ on $\mathcal{P}(\mathbb{N})$ given by $\mu(\{n\})=\|e_n\|$, $n\in \mathbb{N}$, the transformation $f(n)=n+1$, and $W=\{1\}$, where one notes that $f^{-n}(W)=\varnothing$ for all $n\geq 1$. A further simplification can be achieved by \cite[Lemma~4.2]{grosse2011linear}, which shows that it is enough to consider $j=0$. Alternatively, one may give a direct proof exactly as for the bilateral case, using the proof of \cite[Theorems~4.3 and 4.5]{grosse2011linear} for the sufficiency.

\begin{theorem}\label{c:unil}
Let $\mathcal{X} \subseteq \mathbb{K}^\mathbb{N}$ be a Banach space of sequences in which $\text{\emph{span}}\{e_n:n\in\mathbb{N}\}$ is dense and such that, for every $\varepsilon>0$, there is some $\delta>0$ such that
\[
\|x\|<\delta \Longrightarrow \forall n\in\mathbb{N}, \ \min(|x_n|,\|e_n\|)<\varepsilon.
\]
Suppose that the backward shift $B$ is an operator on $\mathcal{X}$. 

\emph{(a)} $B$ is hypercyclic if, and only if, there is an increasing sequence of integers $(n_k)_{k\geq 1}$ such that
 \[
e_{n_k}\to 0.
\]

\emph{(b)} $B$ is mixing if, and only if, 
 \[
e_{k}\to 0.
\]
\end{theorem}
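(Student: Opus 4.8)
My plan is to prove \cref{c:unil} in complete parallel with \cref{thm:bilshift}, exploiting that the unilateral backward shift $B$ moves indices in only one direction: the preimages $f^{-n}(W)$ vanish, so exactly one of the two convergence conditions of the bilateral statement survives. I would treat sufficiency and necessity of each equivalence separately, and I expect the only real work to be in necessity, with sufficiency imported from the classical theory.

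\emph{Sufficiency.} Whenever $\mathrm{span}\{e_n : n\in\mathbb{N}\}$ is dense, the conditions ``$e_{n_k}\to 0$ along some increasing $(n_k)$'' and ``$e_k\to 0$'' are exactly the hypotheses driving the proofs of \cite[Theorems~4.3 and 4.5]{grosse2011linear}, which produce hypercyclicity and mixing via the Gethner--Shapiro and Kitai criteria. There the right inverse is the forward shift $S e_n = e_{n+1}$, and one must check that $S^{n_k}e_j\to 0$ for each fixed $j$. This is where boundedness of $B$ enters: from $\|e_{m-1}\| = \|Be_m\|\le \|B\|\,\|e_m\|$ one gets $\|e_{m-i}\|\le \|B\|^{\,i}\|e_m\|$, so a single index with $\|e_{m_k}\|\to 0$ drags a whole backward block of norms to $0$; choosing $n_k = m_k - c_k$ with $c_k\to\infty$ slowly enough that $\|B\|^{c_k}\|e_{m_k}\|\to 0$ then gives $\|e_{j+n_k}\|\to 0$ for every fixed $j$. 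Thus the single-index hypothesis in the statement already secures the a priori stronger requirement ``$e_{j+n_k}\to 0$ for all $j$''; this reduction is precisely \cite[Lemma~4.2]{grosse2011linear}, which is why it suffices to track a single base vector.

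\emph{Necessity.} I would transcribe the coordinate argument from \cref{thm:bilshift}, keeping only its forward half. Given $\varepsilon\in(0,1)$, fix $\delta>0$ from the hypothesis of \cref{c:unil} (of the same form as \eqref{eq:LC1}), which also makes each coordinate functional continuous. Using hypercyclicity in case (a), respectively mixing in case (b), choose $x\in\mathcal{X}$ and an integer $k$ — some arbitrarily large $k$ in (a), every sufficiently large $k$ in (b) — with $\|x-2e_1\|<\delta$ and $\|B^k x-2e_1\|<\delta$. Since $[B^k x]_1 = x_{1+k}$, shrinking $\delta$ forces $|x_{1+k}-2|<1$, hence $|x_{1+k}|>1$; and as $1+k\ne 1$ we have $[x-2e_1]_{1+k}=x_{1+k}$. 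Applying the hypothesis to $x-2e_1$ and using $|x_{1+k}|>1>\varepsilon$ then yields $\|e_{1+k}\|<\varepsilon$. In (a), taking $\varepsilon=1/m$ and $k=k_m$ increasing gives an increasing $n_m:=1+k_m$ with $e_{n_m}\to 0$; in (b) the bound holds for all large $k$, giving $e_k\to 0$.

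The main obstacle is the asymmetry flagged in the sufficiency step: the naive transitivity construction $z = p + \sum_j q_j e_{j+n}$ would need $\|e_{j+n}\|\to 0$ simultaneously over all indices $j$ appearing in $p,q$, and this does \emph{not} follow pointwise from $\liminf_n\|e_n\|=0$; it is the boundedness of $B$, giving backward control of the norms $\|e_n\|$, together with the diagonalisation above that repairs it. One can sidestep the issue altogether by reading \cref{c:unil} off \cref{th: dissipative,t-dissipative-mix}: take $\mu(\{n\})=\|e_n\|$ on $\mathcal{P}(\mathbb{N})$, $f(n)=n+1$ and $W=\{1\}$, so that $W_m=\{m+1\}$ for $m\ge 0$ and $f^{-n}(W)=\varnothing$; verify \emph{(LH1)}, \emph{(LH2)}, \emph{(LC1)}, \emph{(LC2)} as in the bilateral case, noting for \emph{(LC1)} that $d(x|_{W_m},y|_{W_m})=\min(|x_{m+1}-y_{m+1}|,\|e_{m+1}\|)$; and observe that on the singletons $W_m$ the conditions \emph{(HSC)} and \emph{(MSC)} collapse to the stated convergence of the $e_n$.
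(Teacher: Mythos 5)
Your proposal is correct and takes essentially the same approach as the paper, which likewise proves \cref{c:unil} either by specializing \cref{th: dissipative,t-dissipative-mix} to $\mu(\{n\})=\|e_n\|$, $f(n)=n+1$ and $W=\{1\}$ (noting $f^{-n}(W)=\varnothing$), or by a direct proof ``exactly as for the bilateral case'' --- your forward-half coordinate argument --- with sufficiency imported from \cite[Theorems~4.3 and 4.5]{grosse2011linear}. Your boundedness-of-$B$ diagonalisation correctly re-derives the single-index reduction that the paper obtains by citing \cite[Lemma~4.2]{grosse2011linear}.
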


Previously, the best result in the literature had required that the coordinate projections $x\mapsto x_n e_n$, $n\in\mathbb{Z}$, were equicontinuous on $\mathcal{X}$ (\cite[Theorems 6 and 7]{grosse2000shifts}; see also \cite[Theorems 4.3 and ~4.12]{grosse2011linear} when $(e_n)_n$ is even a basis in $\mathcal{X}$). Since $\min(a,b)\leq \sqrt{ab}$ for $a,b\geq 0$, condition \labelcref{eq:LC1} is less restrictive. The following example shows that we have a strict improvement.

\begin{example}
Let $\mathcal{X}=\{x=(x_n)_{n\geq 1} : |\frac{x_{n+1}}{n+1}- \frac{x_{n}}{n}|\to 0\}$ with norm $\|x\|=|x_1|+ \sup_{n\geq 1}|\frac{x_{n+1}}{n+1}- \frac{x_{n}}{n}|$. Then $\mathcal{X}$ satisfies the hypotheses of \cref{c:unil}, where the density of $\text{span}\{e_n:n\in\mathbb{N}\}$ follows from \cite[Example 5.2.14]{wilansky1984}, and it is not difficult to see that $B$ is an operator on $\mathcal{X}$. On the other hand, the coordinate projections are not equicontinuous because $x=(n\sum_{k=1}^n\frac{1}{k})_n\in \mathcal{X}$ but $\|x_ne_n\| = \frac{|x_n|}{n}\to\infty$. Thus, $B$ is a hypercyclic (even mixing) operator that is outside the scope of \cite{grosse2000shifts}.
\end{example}

A similar example can be given in the bilateral case.

\section{Dissipative systems with bounded distortion}\label{sec: bounded distortion}

Motivated by the recent paper \cite{daniello2022shiftlike},
we show in this section that for invertible dissipative systems with bounded distortion, as in the case of weighted shifts, an operator being mixing is equivalent to it satisfying Kitai's Criterion.

Let $(X, \mathcal{B},\mu, f)$ be a dissipative system generated by $W$, and suppose that $f:X\to X$ is bijective and bimeasurable. We assume that the system $(X, \mathcal{B},\mu, f)$ is of \textit{bounded distortion}, that is, $0<\mu(W)<\infty$ and there exists $K>0$ such that
\[
\frac{1}{K}\frac{\mu(f^k(W))}{\mu(W)}\leq \frac{\mu(f^k(B))}{\mu(B)} \leq K \frac{\mu(f^k(W))}{\mu(W)}
\]
for all $k\in \mathbb{Z}$ and $B\in \mathcal{B}(W)=\{A\cap W:A\in \mathcal{B}\}$.

Following \cite{daniello2022shiftlike} we first consider the space $\mathcal{X}=L^p(X)$, $1\leq p<\infty$. Then let $B_w$ be the weighted shift operator on $\ell^p(\mathbb{Z})$ with weights given by
\[
w_k=\Big(\frac{\mu(f^{k-1}(W))}{\mu(f^k(W))}\Big)^{\frac{1}{p}}, k\in\mathbb{Z};
\]
where $B_w$ is defined by $B_w(x_k)_{k\in \mathbb{Z}}= (w_{k+1}x_{k+1})_{k\in \mathbb{Z}}$.

 The relationship between $T_f$ and $B_w$ is studied in \cite[Theorem~M]{daniello2022shiftlike}. In particular, it is shown there that $B_w$ is mixing if, and only if, $T_f$ is mixing. The next proposition shows that we cannot drop the assumption that the system has bounded distortion in order to conclude that $T_f$ is mixing implies that $B_w$ is mixing.

\begin{proposition}\label{contraexemplo mixing theorem M}
    There exists an invertible dissipative system $(X,\mathcal{B},\mu,f)$ such that the composition operator $T_f$ on $L^p(X)$, $1\leq p<\infty$, is mixing, while the weighted shift $B_w$ on $\ell^p(\mathbb{Z})$, with weights given by \begin{equation}\label{pesos}
        w_k=\Big(\frac{\mu(f^{k-1}(W))}{\mu(f^k(W))}\Big)^\frac{1}{p}, k\in\mathbb{Z},
    \end{equation}
    is well defined but not mixing.
\end{proposition}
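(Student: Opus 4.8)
The plan is to reuse the very system $(X,\mathcal{B},\mu,f)$ built in the proof of \cref{principal}, taking as generator the wandering set $W=I_0=\{0\}\times[0,1]$. There $T_f$ on $L^p(X)$ was already shown to be mixing, and since $0<\mu(W)=1<\infty$, the weights \eqref{pesos} make sense. Writing $a_k:=\mu(f^k(W))$, the bounds $\mu(f^{\pm 1}(B))\leq 2\mu(B)$ recorded in the proof of \cref{principal} give $\tfrac12\leq a_{k-1}/a_k\leq 2$, so the weights $w_k=(a_{k-1}/a_k)^{1/p}$ satisfy $2^{-1/p}\leq w_k\leq 2^{1/p}$; in particular $(w_k)_k$ is bounded and $B_w$ is a well-defined bounded operator on $\ell^p(\mathbb{Z})$. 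It therefore only remains to prove that $B_w$ is not mixing.

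The key observation is that the weights telescope: for every $j$ and $n$,
\[
\prod_{\nu=1}^n w_{j+\nu}=\Big(\frac{a_j}{a_{j+n}}\Big)^{1/p}.
\]
By the standard characterization of mixing for bilateral weighted backward shifts (see \cite[Theorem~4.13]{grosse2011linear}), $B_w$ mixing would force $\prod_{\nu=1}^n w_{j+\nu}\to\infty$ for every $j$, hence $a_{j+n}\to 0$ as $n\to\infty$; equivalently $a_k\to 0$ as $k\to+\infty$. So it suffices to exhibit a subsequence of $(a_k)_{k\geq 1}$ that stays bounded away from $0$.

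To this end I would track the forward orbit of $W$ through the intervals $I_{n,j}^l$ and locate, for each $n\geq 1$, the index $k_n$ with $f^{k_n}(W)=I_{n,1}^{2n}$. A direct computation with the measure $\mu_{n,1}^{2n}$ then gives
\[
a_{k_n}=\mu(I_{n,1}^{2n})=1+\frac{1}{2^n}-\frac{1}{2^{2n}}\geq 1,
\]
because at $l=2n$ the density on $[0,1/2^n]$ equals $2^{2n}/2^{n}=2^n$ while $\lambda([0,1/2^n])=2^{-n}$, so the weighted part contributes $1$ and the complement contributes $2^{-n}-2^{-2n}$. Since $k_n\to+\infty$ while $a_{k_n}\geq 1$, we have $a_k\not\to 0$, so $B_w$ is not mixing.

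The main (and essentially only) obstacle is the bookkeeping of the forward orbit: one must verify that the orbit of $W$ genuinely passes through $I_{n,1}^{2n}$ for every $n$ and correctly read off its measure from the definition of $\mu_{n,j}^l$; everything else is the telescoping identity together with the cited shift characterization. Alternatively, one can avoid citing the weighted-shift theorem by noting that $D e_k:=a_k^{-1/p}e_k$ defines an isometric isomorphism of $\ell^p(\mathbb{Z})$ onto $\ell^p(\mathbb{Z},\nu)$ with $\nu(\{k\})=a_k$ which conjugates $B_w$ to the unweighted backward shift, and then invoking \cref{thm:bilshift}, whereby mixing of $B_w$ reduces to $a_k\to 0$ as $k\to\pm\infty$ and hence fails as above.
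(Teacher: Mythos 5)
Your proposal is correct and takes essentially the same route as the paper's own proof: both reuse the system of \cref{principal} with generator $W=\{0\}\times[0,1]$, note that the weights are bounded, telescope them to get $\prod_{j=1}^{k} w_j=\big(\mu(W)/\mu(f^k(W))\big)^{1/p}$, and contradict the mixing characterization of \cite[Theorem~4.13]{grosse2011linear} by producing infinitely many $k$ with $\mu(f^k(W))\geq 1$. Your explicit computation $\mu(I_{n,1}^{2n})=1+2^{-n}-2^{-2n}\geq 1$ is exactly the fact behind the paper's infinite set $D=\{3,7,14,22,\ldots\}$ of such times, so the two arguments coincide in substance.
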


\begin{proof}
    Let $(X,\mathcal{B},\mu,f)$ be the invertible measurable system from the proof of \cref{principal}. It is dissipative with generator $W=\{0\}\times [0,1]$. Let $\mathcal{X}=L^p(X)$, $1\leq p<\infty$. By construction, the weights $w_k$, $k\in\mathbb{Z}$, are bounded, so that $B_w$ is a weighted shift on $\ell^p(\mathbb{Z})$. 

    Let $D\subseteq \mathbb{N}$ be the (infinite) set of numbers $k\geq 1$ such that 
    \[
    \mu(f^k(W))\geq 1,
    \]
    that is, $D=\{3,7,14,22,\ldots\}$.
    
    We claim that 
    \[
    \prod_{j=1}^{k} w_j\leq 1
    \]
    for every $k\in D$. Indeed, by \eqref{pesos} we have that
    \[
    \prod_{j=1}^{k} w_j=\Big(\frac{\mu(W)}{\mu(f(W))}\Big)^\frac{1}{p}\cdot \Big(\frac{\mu(f(W))}{\mu(f^2(W))}\Big)^\frac{1}{p}\cdots \Big(\frac{\mu(f^{k-1}W)}{\mu(f^k(W))}\Big)^\frac{1}{p}=\Big(\frac{\mu(W)}{\mu(f^k(W))}\Big)^\frac{1}{p}\leq 1,
    \]
    since $\mu(W)=\lambda([0,1])=1$ and $k\in D$.

    The previous claim implies that 
    \[
    \prod_{j=1}^{k} w_j\leq 1
    \]
    for infinitely many $k\geq 1$. By \cite[Theorem~4.13]{grosse2011linear}, this shows that $B_w$ is not mixing.
\end{proof}

We now show that, for very general spaces $\mathcal{X}$, and for invertible dissipative systems with bounded distortion, mixing implies Kitai's Criterion.

\begin{theorem}\label{theorem kitai bounded dist}
    Let $(X,\mathcal{B},\mu,f)$  be an invertible dissipative system of bounded distortion. Suppose that $\mathcal{X}$ satisfies \emph{(LH1)}, \emph{(LH2)}, \emph{(LC1)} and \emph{(LC2)}. The following are equivalent:
    \begin{enumerate}
        \item[\emph{(a)}] $T_f$ is mixing;
        \item[\emph{(b)}] $T_f$ satisfies Kitai's Criterion;
        \item[\emph{(c)}] $f$ satisfies  
        \[
    \mu \big(f^n(W)\big) \to 0 \hspace*{0.5cm} \text{ and } \hspace*{0.5cm} \mu \big(f^{-n}(W)\big) \to 0.
    \]
    \end{enumerate}
\end{theorem}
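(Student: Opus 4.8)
The plan is to establish the cycle of implications (b)$\Rightarrow$(a)$\Rightarrow$(c)$\Rightarrow$(b), where the equivalence (a)$\Leftrightarrow$(c) and the characterization of mixing via (MSC) should be the entry points, since mixing is already characterized by \cref{t-dissipative-mix}. The implication (b)$\Rightarrow$(a) is immediate from the very definition of Kitai's Criterion (Kitai's Criterion implies mixing), so no work is needed there. The substance of the theorem lies in the implications (a)$\Rightarrow$(c) and (c)$\Rightarrow$(b), and this is precisely where the bounded distortion hypothesis must be used.

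For (a)$\Rightarrow$(c), I would invoke \cref{t-dissipative-mix}: since $T_f$ is mixing, $f$ satisfies (MSC). Applying (MSC) to $m=0$ and the full wandering set $A=W$ (which has finite measure by the bounded distortion assumption $0<\mu(W)<\infty$), one obtains sets $B_n\subseteq W$ with $\mu(W\setminus B_n)\to 0$, $\mu(f^{-n}(B_n))\to 0$ and $\mu^*(f^n(B_n))\to 0$. Because $\mu(B_n)\to\mu(W)>0$, I can bound $\mu(B_n)\geq \tfrac{1}{2}\mu(W)$ for large $n$, and then bounded distortion gives
\[
\mu(f^n(B_n))\geq \frac{1}{K}\frac{\mu(f^n(W))}{\mu(W)}\,\mu(B_n)\geq \frac{\mu(f^n(W))}{2K}.
\]
Since $f^n(B_n)\subseteq f^n(W)\in\mathcal{B}$, the outer measure equals the measure, so $\mu^*(f^n(B_n))=\mu(f^n(B_n))\to 0$ forces $\mu(f^n(W))\to 0$. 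The same argument applied to $f^{-1}$ (or to the term $\mu(f^{-n}(B_n))$, using the distortion inequality in the reverse direction) yields $\mu(f^{-n}(W))\to 0$, giving (c).

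For (c)$\Rightarrow$(b), I would verify the Kitai Shift-Like Condition (KSC) and then appeal to \cref{thm: kitai characterization diss}. Fix $m\in\mathbb{Z}$ and a measurable $A\subseteq W_m$ of finite measure; take simply $B=A$, so that $\mu(A\setminus B)=0<\varepsilon$ automatically. Writing $A=f^m(A')$ with $A'\subseteq W$, bounded distortion gives, for each $k$,
\[
\mu(f^{n}(A)) = \mu(f^{n+m}(A'))\leq K\,\frac{\mu(f^{n+m}(W))}{\mu(W)}\,\mu(A'),
\]
and the analogous bound for $f^{-n}(A)=f^{-n+m}(A')$. Since (c) states $\mu(f^{\pm n}(W))\to 0$ and $n+m\to\infty$, $-n+m\to-\infty$ as $n\to\infty$, both $\mu(f^n(A))\to 0$ and $\mu(f^{-n}(A))\to 0$; thus $B=A$ witnesses (KSC). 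By \cref{thm: kitai characterization diss}, (KSC) is equivalent to $T_f$ satisfying Kitai's Criterion, establishing (b).

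The main obstacle is the careful bookkeeping in (a)$\Rightarrow$(c): one must ensure the set $B_n$ supplied by (MSC) still carries a definite fraction of the measure of $W$ so that the two-sided distortion estimate transfers the decay of $\mu(f^n(B_n))$ to decay of $\mu(f^n(W))$, rather than being vacuously satisfied by sets of shrinking measure. The key observation that rescues this is that (MSC) guarantees $\mu(W\setminus B_n)\to 0$, so $B_n$ is asymptotically all of $W$; combined with $0<\mu(W)<\infty$ and the uniform constant $K$, the distortion bound becomes effective. Everything else reduces to routine applications of the distortion inequalities and the already-proved characterizations.
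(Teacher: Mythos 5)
Your proposal is correct and follows essentially the same route as the paper: (b)$\Rightarrow$(a) is immediate, (a)$\Rightarrow$(c) uses \cref{t-dissipative-mix} and (MSC) applied to $A=W$ together with the bounded distortion inequality, and (c)$\Rightarrow$(b) verifies (KSC) with $B=A$ and appeals to \cref{thm: kitai characterization diss}. The only cosmetic difference is that in (c)$\Rightarrow$(b) you invoke bounded distortion, whereas the paper gets by with plain monotonicity, $\mu(f^{\pm n}(A))\leq \mu(f^{m\pm n}(W))\to 0$ since $A\subseteq W_m$.
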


\begin{proof}
Suppose that (c) holds. Let $m\in\mathbb{Z}$ and $A\subseteq W_m$ be a measurable set with finite measure. Then the set $B:=A$ satisfies by the hypothesis that
     \[
    \mu(f^n(B))\leq\mu (f^n(W_m))= \mu(f^{m+n}(W))\to  0
    \]
    and 
    \[
    \mu(f^{-n}(B))\leq\mu ( f^{-n}(W_m))= \mu(f^{m-n}(W))\to  0.
    \]
    By \cref{thm: kitai characterization diss}, we can conclude that $T_f$ satisfies Kitai's Criterion.
    
    This in turn implies that $T_f$ is mixing.

    Now suppose that $T_f$ is mixing. By \cref{t-dissipative-mix}, condition (MSC) holds. Since, by assumption, $0<\mu(W)<\infty$, this condition implies that there is a sequence of measurable sets $B_n\subseteq W$ such that $\mu(B_n)\to \mu(W)$,
\[
				 \mu(f^{-n}(B_n))\to 0 \hspace*{0.5cm}\text{and}\hspace*{0.5cm} \mu(f^{n}(B_n))\to 0.
				\] 
But then, since the system has bounded distortion, there is some $K>0$ such that, for all $k\in \mathbb{Z}$,
\[
\frac{\mu(f^k(W))}{\mu(W)}\leq K \frac{\mu(f^k(B_n))}{\mu(B_n)}.
\]
Altogether we obtain that
\[
 \mu(f^{-n}(W))\to 0 \hspace*{0.5cm}\text{and}\hspace*{0.5cm} \mu(f^{n}(W))\to 0,
\]
and hence (c).
\end{proof}

As a consequence we obtain the version of \cite[Theorem M]{daniello2022shiftlike} for Kitai's Criterion. This follows from the fact that the result holds for the property of mixing, and 
because mixing and satisfying Kitai's Criterion are equivalent conditions for weighted shifts.

\begin{corollary}\label{corollary kitai bounded}
Let $(X,\mathcal{B},\mu,f)$  be an invertible dissipative system of bounded distortion. Let $1\leq p<\infty$, and let $B_w$ be the weighted shift on $\ell^p(\mathbb{Z})$ with weights given by \eqref{pesos}. Then $T_f$ satisfies Kitai's Criterion on $\mathcal{X}=L^p(X)$ if and only if $B_w$ satisfies Kitai's Criterion on $\ell^p(\mathbb{Z})$.
\end{corollary}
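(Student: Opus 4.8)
The plan is to prove the corollary by assembling a chain of equivalences that links Kitai's Criterion for $T_f$ to Kitai's Criterion for $B_w$ through the common intermediate property of being mixing, for which all the necessary equivalences are already at hand.

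First I would note that $\mathcal{X}=L^p(X)$, $1\leq p<\infty$, satisfies the global conditions (H1), (H2), (C1) and (C2), and hence, as observed in the remark introducing the local conditions, it satisfies (LH1), (LH2), (LC1) and (LC2). Consequently \cref{theorem kitai bounded dist} applies to the invertible dissipative system of bounded distortion $(X,\mathcal{B},\mu,f)$ and yields the first equivalence: $T_f$ satisfies Kitai's Criterion on $L^p(X)$ if and only if $T_f$ is mixing. For the middle link I would invoke \cite[Theorem~M]{daniello2022shiftlike}, which states precisely that, in the bounded distortion setting and with the weights given by \eqref{pesos}, $T_f$ is mixing on $L^p(X)$ if and only if $B_w$ is mixing on $\ell^p(\mathbb{Z})$.

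It then remains to close the chain by showing that, for the weighted shift $B_w$, being mixing is equivalent to satisfying Kitai's Criterion. One direction is automatic, since Kitai's Criterion always implies mixing. For the converse I would recall the characterization of mixing bilateral weighted shifts (\cite[Theorem~4.13]{grosse2011linear}): when $B_w$ is mixing one takes $\mathcal{X}_0=\mathcal{Y}_0=\operatorname{span}\{e_n:n\in\mathbb{Z}\}$, which is dense in $\ell^p(\mathbb{Z})$, and lets $S$ be the associated weighted forward shift; the mixing condition on the products of the weights gives directly that $B_w^n e_j\to 0$ and $S^n e_j\to 0$ for every $j$, while $B_w S=\mathrm{id}$, so that Kitai's Criterion holds. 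Concatenating the three equivalences proves the corollary.

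I do not expect a genuine obstacle here: the content lies entirely in quoting the three ingredients correctly and in checking that the mixing criterion for weighted shifts actually produces the dense subspaces and the map $S$ demanded by Kitai's Criterion. The only point requiring a little care is this last equivalence for weighted shifts, which should be stated explicitly (with the reference to \cite[Theorem~4.13]{grosse2011linear}) rather than treated as folklore.
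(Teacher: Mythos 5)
Your proposal is correct and follows essentially the same route as the paper, which deduces the corollary from \cref{theorem kitai bounded dist} (Kitai's Criterion $\Leftrightarrow$ mixing for $T_f$ in the bounded distortion setting), \cite[Theorem~M]{daniello2022shiftlike} ($T_f$ mixing $\Leftrightarrow$ $B_w$ mixing), and the known equivalence of mixing and Kitai's Criterion for weighted shifts. Your explicit verification of this last equivalence via \cite[Theorem~4.13]{grosse2011linear}, with $\mathcal{X}_0=\mathcal{Y}_0=\operatorname{span}\{e_n:n\in\mathbb{Z}\}$ and $S$ the weighted forward shift, correctly fills in a detail the paper leaves to the reader.
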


We note that the corollary can also be obtained directly, using the methods of \cite[Section 5]{daniello2022shiftlike}.

\begin{acknowledgement}
    The first author was supported by the São Paulo Research Foundation (FAPESP), grants 2021/02672-2 and 2023/03661-0; the second author was supported by the Fonds de la Recherche Scientifique - FNRS under grant n\textsuperscript{o} CDR J.0078.21.
\end{acknowledgement}

\end{document}